\newcommand{\diff}{$\gamma_{\x,\y}$}
\newcommand{\difft}{$\tilde{\gamma}_{\x,\y}$}
\let\oldmarginpar\marginpar
\renewcommand\marginpar[1]{\-\oldmarginpar[\raggedleft\footnotesize #1]%
{\raggedright\footnotesize #1}}
\newcommand{\lbra}{{\em (}}
\newcommand{\rbra}{{\em )}}
\newcommand{\R}{\mathbb{R}}
\newcommand{\C}{\mathbb{C}}
\newcommand{\Z}{\mathbb{Z}}
\newcommand{\OneHalf}{\frac{1}{2}}
\newcommand{\cm}{\cdot}
\newcommand{\HD}{(\Sigma,\boldsymbol{\alpha},\boldsymbol{\beta})}
\newcommand\SpinC{\mathrm{Spin}^c}
\newcommand\Seif{R}
\newcommand\RelSpinC{\underline{\mathrm{Spin}}^c}
\newcommand\relspinc{\underline{\spinc}}
\newcommand\x{\mathbf x}
\newcommand\y{\mathbf y}
\newcommand\ModSphere{\ModFlow\left({\mathbb S}\longrightarrow 
\Sym^{g-1}(\Sigma_{1})\times \Sym^2(\Sigma_{2})\right)}
\newcommand\ModSpheres\ModSphere
\newcommand\UnparModSp{\widehat \ModSp}
\newcommand\UnparModFlow\UnparModSp
\newcommand\Mod\ModSp
\newcommand{\spinc}{\mathfrak s}
\newcommand\ModMaps{\mathcal M}
\newcommand\ModSp\ModMaps
\newcommand\Dom{\mathcal D}
\newcommand\spincrel\relspinc
\newcommand\HFK{HFK}
\newcommand\HFKa{\widehat\HFK}
\newcommand\Dual{\mathcal D}
\newcommand\Duality\Dual
\newcommand\ons{Ozsv{\'a}th and Szab{\'o}}
\newcommand\os{{Ozsv{\'a}th-Szab{\'o}}}
\newtheorem{thm}{Theorem}[section]          \newtheorem*{thm*}{Theorem}
\newtheorem{lem}[thm]{Lemma}                  \newtheorem*{lem*}{Lemma}
          \newtheorem*{ques*}{Question}
\newtheorem{prop}[thm]{Proposition}           \theoremstyle{definition}
\newtheorem{defn}[thm]{Definition}      \newtheorem*{defn*}{Definition}
\newtheorem{ex}[thm]{Example}                     \theoremstyle{remark}
\newtheorem{rem}[thm]{Remark}
\begin{document}

\title{On sutured Floer homology and the equivalence of Seifert surfaces}%
\author{Matthew Hedden, Andr\'as Juh\'asz, Sucharit Sarkar}%

\subjclass{57M27; 57R58}

\date{\today}%
\begin{abstract}
We study the sutured Floer homology invariants of the sutured manifold obtained by cutting a knot complement along a Seifert surface, $\Seif$.   We show that these invariants are finer than the ``top term" of the knot Floer homology, which they contain.  In particular, we use sutured Floer homology to distinguish two non-isotopic minimal genus Seifert surfaces for the knot $8_3$.   A key ingredient for this technique is finding  appropriate Heegaard diagrams for the sutured manifold associated to the complement of a Seifert surface.

\end{abstract}
\maketitle

\section{Introduction}
It is well-known that every knot in the three-sphere bounds an embedded orientable surface.  The various surfaces which a given knot bounds  are called {\em Seifert surfaces}, and play an important role in knot theory and low-dimensional topology as a whole.  Given a knot, $K$, the minimum genus of any Seifert surface for $K$ is called the {\em genus} of $K$.  The genus of a knot is a fundamental invariant, and minimal genus Seifert surfaces tell us a lot about the topological  and geometric properties of a knot.  In particular, the only knot of genus zero is the unknot.

A natural question is to what extent minimal genus Seifert surfaces are unique.   For instance, if a knot is fibered (that is, its complement is a fiber bundle over the circle with fibers consisting of Seifert surfaces) then the fiber surface is the unique minimal genus Seifert surface \cite{Burde}.  This means that any other minimal genus Seifert surface is isotopic to the fiber \footnote{There are two natural definitions of equivalence between Seifert surfaces, depending upon whether the isotopy occurs entirely in the complement.  We review these subtleties in Section \ref{sec:example}}.  Many examples are known, however, of knots with non-isotopic Seifert surfaces \cite{Alford,Eisner,Kakimizu1992,Kakimizu,Kob, Lyon}.

To date, most of the techniques for distinguishing Seifert surfaces have fallen into two categories: using the algebraic topology of  the surface's complement  e.g. the fundamental group \cite{Alford,Eisner,Lyon} or the Seifert form \cite{Trotter},  and Gabai's theory of sutured manifolds, \cite{Kob,Kakimizu1992}.  While the algebraic techniques are quite powerful, they often lead to difficult group or number theoretic questions and may be difficult to wield in general.   Additionally, some examples are beyond the scope of traditional algebraic topological tools.

Sutured manifold theory,  on the other hand,  has provided methods - but not invariants - which have been useful in understanding minimal genus Seifert surfaces for many knots.  The two techniques differ not only in spirit, but in the type of equivalence of Seifert surfaces which apply. Thus it would be desirable to have a computable invariant of Seifert surfaces which interacts well with the techniques of sutured manifold theory.

 The purpose of this article is to show that {\em sutured Floer homology}, introduced by the second author in \cite{sutured}, is precisely such an invariant.   Denoted $SFH(M,\gamma)$, the sutured Floer homology is an invariant associated to a (balanced) sutured manifold, $(M,\gamma)$, by a Lagrangian Floer homology construction.

Inspiration for the sutured Floer invariants came from the  invariants of knots and three-manifolds defined by \ons \   \cite{OSz,OSz3,OSz6,Ras}.  A key feature of the \os \  invariants  is their ability to detect  the genus of a knot, $K$.  The proof of this fact utilized sutured manifolds, but only so much as they were instrumental in providing taut foliations which allowed contact geometric and symplectic techniques to be employed \cite{OSz6}.


With the advent of sutured Floer homology, a precise relationship between Gabai's machinery and Heegaard Floer homology has now been established \cite{sutured,decomposition,suturedpolytope}.  Moreover, the genus detection  of knot Floer homology has an elegant reinterpretation in this theory which we briefly explain.

Given a Seifert surface, $\Seif$, for a knot $K\subset S^3$, we obtain a  sutured manifold, $S^3(\Seif)=(M,\gamma)$, by cutting along $\Seif$.  That is, we take $M = S^3 \setminus \mathrm{Int}(\Seif \times I)$ with suture $\gamma = \partial \Seif \times I.$ In \cite{decomposition} it was shown that $$SFH(S^3(\Seif)) \cong \widehat{HFK}(K,g(\Seif)).$$
Here, the right hand side is the knot Floer homology group of $K$ supported in Alexander grading $g(\Seif)$ \cite{OSz3}.  This isomorphism was then used, together with further properties of $SFH$ and results of Gabai, to reprove (among many other things) the fact that knot Floer homology detects the genus.    A striking aspect of this new proof is that it completely bypasses the four-dimensional methods which were originally used.

Now sutured Floer homology is an invariant of the sutured manifold, up to a natural notion of equivalence. It is immediate that isotopic Seifert surfaces produce equivalent sutured manifolds, and so one could hope that the sutured Floer homology of $S^3(\Seif)$ provides interesting information about the isotopy type of $\Seif$.    This optimism is quickly challenged by the isomorphism above; the knot Floer homology groups do not depend on the Seifert surface.  Examining the sutured Floer homology groups more closely, however, reveals structure not present in the knot Floer homology group.  This additional structure takes the form of a grading by (relative) $\SpinC$ structures.

For the reader unfamiliar with $\SpinC$ structures, we recall that the space of $\SpinC$ structures on a sutured manifold, $(M,\gamma)$, is isomorphic to $H_1(M;\Z)$ as an affine space.  Thus we can think of sutured Floer homology as having a grading by elements of $H_1(M;\Z)$.    Using this extra grading, we can provide the first explicit examples of minimal genus Seifert surfaces which are distinguished by sutured Floer homology.  Indeed, we have the following theorem

\begin{thm}
There exist two minimal genus Seifert surfaces, $\Seif_1$ and $\Seif_2$, for the knot $8_3$ for which no isotopy of $S^3$ sends $\Seif_1$ to $\Seif_2$.  Indeed, there does not exist an orientation-preserving diffeomorphism of the pairs $(S^3,\Seif_1)$, $(S^3,\Seif_2)$.
\end{thm}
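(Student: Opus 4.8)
The strategy is to distinguish $\Seif_1$ and $\Seif_2$ using the sutured Floer homology groups $SFH(S^3(\Seif_i))$ \emph{together with their gradings by relative $\SpinC$ structures}. The logical skeleton is short. An ambient isotopy of $S^3$, and more generally an orientation-preserving diffeomorphism of pairs $(S^3,\Seif_1)\to(S^3,\Seif_2)$, restricts to a diffeomorphism of the balanced sutured manifolds $S^3(\Seif_1)\to S^3(\Seif_2)$ preserving the orientation of the underlying three-manifold; under such a diffeomorphism $SFH$ and its $\SpinC$-grading are carried to one another up to an affine isomorphism $H_1(S^3(\Seif_1);\Z)\to H_1(S^3(\Seif_2);\Z)$, by the invariance established in \cite{sutured}, and in particular the sutured Floer polytope in $H_1(S^3(\Seif_i);\R)$ of \cite{suturedpolytope} is carried to the other. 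So it suffices to produce minimal genus Seifert surfaces $\Seif_1,\Seif_2$ for $8_3$ whose $\SpinC$-graded groups are \emph{not} related in this way. Ruling out orientation-preserving diffeomorphisms of pairs is strictly stronger than ruling out ambient isotopies, so it implies both assertions of the theorem. Since $8_3$ is amphichiral one expects $\Seif_1$ and $\Seif_2$ to be interchanged by an orientation-\emph{reversing} self-diffeomorphism of $(S^3,8_3)$; this is precisely why the ungraded group --- which by \cite{decomposition} is isomorphic to $\HFKa(8_3,1)$ and hence the same for both surfaces --- cannot detect the difference, and why the orientation-sensitive $\SpinC$-grading must be retained.

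First I would fix explicit genus one Seifert surfaces $\Seif_1,\Seif_2$ for $8_3$; these will be presented concretely in Section~\ref{sec:example} (for instance as plumbings of two annuli), and they are minimal genus since $8_3$ has genus one. For each surface, $M_i:=S^3(\Seif_i)$ is a balanced sutured manifold with $H_1(M_i;\Z)\cong H_1(\Seif_i;\Z)\cong\Z^2$, so the relative $\SpinC$ structures grading $SFH(S^3(\Seif_i))$ form an affine space over $\Z^2$.

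The crux --- and the step I expect to be the main obstacle --- is to realize each $(M_i,\gamma_i)$ by a sutured Heegaard diagram that is simultaneously correct, small enough for an explicit enumeration of generators and a holomorphic disk count, and amenable to computing the $\SpinC$-grading. My approach would be to apply the surface decomposition theorem of \cite{decomposition}: start from a sutured Heegaard diagram for the knot exterior $S^3\setminus N(8_3)$ in which the chosen surface $\Seif_i$ is visible as a ``nice'' decomposing subsurface, and decompose along $\Seif_i$ to obtain a diagram for $(M_i,\gamma_i)$. The genuine difficulty is to exhibit such an \emph{adapted} diagram for these particular, somewhat intricate surfaces while keeping the Heegaard genus and the intersection number small enough to compute by hand; carrying this out carefully for both $\Seif_1$ and $\Seif_2$ is the technical heart of the argument.

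With the two diagrams in hand, I would enumerate the generators of each sutured complex, compute the $H_1$-valued difference classes $\epsilon(\x,\y)$ to partition the generators into relative $\SpinC$ structures (after fixing a reference generator and a basis of $H_1(M_i)\cong\Z^2$), identify the differential, and read off the $\Z^2$-graded homology. Finally I would compare the two resulting $\Z^2$-graded rank functions and exhibit an affine invariant distinguishing them --- the shape of the support polygon, or the multiset of ranks over the $\SpinC$ structures --- with the expectation that the two support polygons are mirror images lying in the same $\mathrm{GL}_2(\Z)$-orbit but different $\mathrm{SL}_2(\Z)$-orbits. This obstructs the existence of any orientation-preserving diffeomorphism $(S^3,\Seif_1)\to(S^3,\Seif_2)$, hence of any ambient isotopy of $S^3$ carrying $\Seif_1$ to $\Seif_2$. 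The only serious risk is a computational slip in the diagram or the disk count, against which the isomorphism $SFH(S^3(\Seif_i))\cong\HFKa(8_3,1)$ provides a built-in check on the total rank.
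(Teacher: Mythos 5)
Your overall architecture --- adapted surface diagrams, decomposition along $\Seif_i$, enumeration of generators, difference classes $\epsilon(\x,\y)$, and the rank check against $\HFKa(8_3,1)$ --- matches the paper's. But the final comparison step, which is where the actual content of the theorem lives, does not work as you propose. For both surfaces the computation yields $SFH(S^3(\Seif_i))\cong(\Z/2\Z)^4$, with one copy of $\Z/2\Z$ in each of four relative $\SpinC$ structures whose supports form a unit square $\{0,e_1,e_2,e_1+e_2\}$ in a basis of $H_1(S^3\setminus\Seif_i;\Z)\cong\Z^2$. The two support polygons therefore have the same shape, the same rank multiset $\{1,1,1,1\}$, and lie in the same $\mathrm{SL}_2(\Z)$-orbit (the linear map carrying one spanning basis to the other has determinant $1$). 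Every invariant you propose at the comparison stage --- polygon shape, rank multiset, $\mathrm{SL}_2(\Z)$ versus $\mathrm{GL}_2(\Z)$ orbit --- fails here: as abstractly relatively $\SpinC$-graded groups, $SFH(S^3(\Seif_1))$ and $SFH(S^3(\Seif_2))$ \emph{are} isomorphic.

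The missing idea is the paper's use of the Seifert form. The canonical isomorphism $H_1(\Seif)\cong H_1(S^3\setminus\Seif)$ (Poincar\'e duality plus excision) transports $Q_{\Seif}$ to a bilinear form on $H_1(S^3\setminus\Seif;\Z)$, and any orientation-preserving diffeomorphism of pairs $(S^3,\Seif_1)\to(S^3,\Seif_2)$ induces an $f_*$ that preserves this form. One then evaluates $Q$ on the difference classes spanning the support: for $\Seif_1$ the edge classes satisfy $c_1^2=2$, $c_2^2=-2$, $c_1\cdot c_2=0$, while for $\Seif_2$ the only difference classes of square $2$ (resp. $-2$) are $\pm d_1$ (resp. $\pm d_2$), and $d_1\cdot d_2\ne 0$. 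This is incompatible with $f_*$ carrying the support of one graded group to the other while preserving the form, so no such diffeomorphism exists. Note that neither ingredient suffices alone: the Seifert matrices $V_{R_1}$ and $V_{R_2}$ are congruent (the paper exhibits $W\in\mathrm{SL}_2(\Z)$ with $V_{R_2}=W^TV_{R_1}W$ which also preserves the intersection form), and the graded groups are abstractly isomorphic; it is the requirement that a single $f_*$ simultaneously respect the $\SpinC$-grading and the Seifert form that produces the contradiction. Your proof needs this (or some equally effective pinning-down of the allowed identifications of $H_1$) to close.
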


We remark that while it was previously known that $\Seif_1$ and $\Seif_2$ are not isotopic {\em in the complement of $8_3$} \cite{Kob}, the question of whether they were isotopic was open.  Indeed, all previously available techniques fail to distinguish $\Seif_1$ and $\Seif_2$, up to isotopy.  See Subsection \ref{subsec:classical} for more details.  Using the above example, we also obtain

\begin{thm}For any $n\ge 1$, there exists a knot $K_n$ with Seifert surfaces $\{F_0,\dots F_n\}$, such that $F_i$ is not isotopic to $F_j$ for any $i\ne j$.
\end{thm}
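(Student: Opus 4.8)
The plan is to manufacture $K_n$ as a connected sum of copies of $8_3$ and to distinguish its Seifert surfaces by exploiting multiplicativity of $SFH$ under the corresponding operation on sutured manifolds. Let $\Seif_1,\Seif_2$ be the two minimal genus Seifert surfaces for $8_3$ produced by the previous theorem, put $K_n=\#^n 8_3$, and for $0\le i\le n$ let $F_i$ be the boundary connected sum of $i$ copies of $\Seif_2$ and $n-i$ copies of $\Seif_1$. Then $\partial F_i=\#^n 8_3=K_n$, so each $F_i$ is a Seifert surface for $K_n$, and since the genus of a knot and of a surface are both additive under (boundary) connected sum, every $F_i$ is of minimal genus $n\cdot g(8_3)$; thus $\{F_0,\dots,F_n\}$ is a list of $n+1$ Seifert surfaces for $K_n$. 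An isotopy of a Seifert surface in $S^3$ restricts to an orientation-preserving diffeomorphism of the associated sutured manifolds, so it suffices to prove that $SFH(S^3(F_i))$ and $SFH(S^3(F_j))$ are non-isomorphic as relatively $\SpinC$-graded groups whenever $i\ne j$.

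Next I would establish the product formula. The $2$-sphere along which $K\#K'$ is formed meets $F\natural F'$ in an arc joining its two punctures, hence meets $(F\natural F')\times I$ in a disk and $S^3(F\natural F')=S^3\setminus\mathrm{Int}((F\natural F')\times I)$ in a properly embedded disk $D$ which meets each of $R_+,R_-$ in a single arc and the suture in exactly two points; thus $D$ is a product disk, and it separates $S^3(F\natural F')$ into the two pieces $S^3(F)$ and $S^3(F')$. By the surface decomposition formula of \cite{decomposition}, decomposition along a product disk induces a $\SpinC$-respecting isomorphism (every $\SpinC$ structure is outer), so
\[
SFH(S^3(F\natural F'))\;\cong\;SFH\big(S^3(F)\sqcup S^3(F')\big)\;\cong\;SFH(S^3(F))\otimes SFH(S^3(F')),
\]
the grading group $H_1(S^3(F\natural F'))$ splitting correspondingly as $H_1(S^3(F))\oplus H_1(S^3(F'))$. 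Iterating, and writing $A_k:=SFH(S^3(\Seif_k))$, we obtain $SFH(S^3(F_i))\cong A_2^{\otimes i}\otimes A_1^{\otimes(n-i)}$, graded by the direct sum of $n$ copies of $H_1$ of an $8_3$ Seifert surface complement.

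It remains to distinguish these tensor powers, and this is where the main difficulty lies. The proof of the previous theorem shows that $A_1\not\cong A_2$ as relatively $\SpinC$-graded groups, and one must convert this into a statement robust under the unavoidable affine ambiguity in the grading. The cleanest invariant is the number $N(M,\gamma)$ of relative $\SpinC$ structures carrying non-zero $SFH$: it depends only on the equivalence class of $(M,\gamma)$ and involves no choice of basepoint, and since the support of a tensor product over a direct-sum grading group is the Cartesian product of supports, $N(S^3(F_i))=N(S^3(\Seif_2))^{\,i}\cdot N(S^3(\Seif_1))^{\,n-i}$. If the $8_3$ computation yields $N(S^3(\Seif_1))\ne N(S^3(\Seif_2))$ --- the outcome I would check first --- this is strictly monotone, hence injective, in $i$, and we are done; if those numbers agree, I would instead use the sutured polytope of \cite{suturedpolytope}, which the product formula realizes as the Cartesian product of the polytopes of the summands, together with the fact that a Cartesian product of polytopes has a unique factorization into affine-indecomposable factors, to recover $i$ from $SFH(S^3(F_i))$ (and if even the polytopes coincide, the same recovery runs with the graded groups $A_1,A_2$ themselves). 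A more routine point still to be checked is that the product-disk decomposition really identifies $S^3(F\natural F')$ with $S^3(F)\sqcup S^3(F')$ as sutured manifolds, compatibly with relative $\SpinC$ structures.
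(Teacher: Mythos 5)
Your setup---$K_n=\#^n 8_3$, $F_i$ the boundary connected sum of copies of $\Seif_1$ and $\Seif_2$, reduction via product disk decompositions to a disjoint union, and then the K\"unneth formula for $SFH$---is exactly the paper's route, and that part is sound. The gap is in your final distinguishing step, and it is fatal as written. From the paper's computation, $SFH(S^3(\Seif_1))$ and $SFH(S^3(\Seif_2))$ are each four copies of $\Z/2\Z$ supported on the four lattice points $\{0,c_1,c_2,c_1+c_2\}$, resp.\ $\{0,d_1,d_2,d_1+d_2\}$, i.e.\ on a unit parallelogram in $H_1$ of the surface complement. Consequently every invariant you propose takes the same value on the two surfaces: the number of supporting relative $\SpinC$ structures is $4$ for both; the sutured polytopes are both unit parallelograms, hence equivalent up to translation and $GL(2,\Z)$; and even the relatively graded groups themselves are isomorphic in the sense of Definition \ref{defn:SFHiso}, via the bijection induced by $c_k\mapsto d_k$. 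It follows that $SFH(S^3(F_i))$ and $SFH(S^3(F_j))$ are isomorphic as relatively graded groups for \emph{all} $i,j$---each is $4^n$ copies of $\Z/2\Z$ on a unit hypercube in $\Z^{2n}$---so neither the support count (which is $4^n$ throughout, not monotone in $i$) nor unique factorization of the polytope into indecomposables can recover $i$: the factors contributed by $\Seif_1$ and by $\Seif_2$ are indistinguishable by these means.

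The missing ingredient is the one that made the $8_3$ argument work in the first place: an orientation-preserving diffeomorphism of pairs $(S^3,F_i)\to(S^3,F_j)$ induces an isomorphism $f_*\colon H_1(S^3\setminus F_i)\to H_1(S^3\setminus F_j)$ which preserves the Seifert form $Q_{F}$ (and the intersection form) transported from $H_1(F)$ via the chain of isomorphisms in Subsection \ref{subsec:technique}. The paper's proof uses the self-pairings $2n$ and $-2n$ of the difference classes $\epsilon({\bf x_1y_1},{\bf x_2y_2})$ and $\epsilon({\bf x_1y_1},{\bf x_3y_3})$ to pin these classes down up to signs, the intersection form to fix the signs, and then evaluates the Seifert form on the pair to get $i-n$ for $F_i$ versus $k-n$ for $F_k$, a contradiction when $i\ne k$. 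Some constraint of this kind on the allowable identifications of the grading groups is indispensable; without it, the groups you are trying to tell apart in your last paragraph really are isomorphic.
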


Previously, there were examples known of  knots possessing infinitely many Seifert surfaces which are pairwise non-isotopic in the complement of $K$ \cite{Eisner}.   However, these examples are known to be isotopic in $S^3$, and again our theorem appears to be the strongest to date in the way of producing knots with many non-isotopic surfaces.

As the primary purpose of this article  is to provide a foundation for further study, the details of any particular example are somewhat beside the point.  We expect the techniques presented here to be applicable for a variety of questions in the study of Seifert surfaces, and conclude by briefly explaining the two major components of our framework.

The first is an explicit  understanding of  Heegaard diagrams for the sutured manifold associated to a Seifert surface.  Section \ref{sec:heegs} discusses these diagrams in detail.  In particular, we outline a very general method for obtaining such diagrams and then provide explicit algorithms.

 The second key feature is the extraction of  $\SpinC$ information from the aforementioned diagrams. The difference between any two $\SpinC$ structures  supporting sutured Floer homology yields an element of $H_1(M;\Z)$.  This element can be explicitly identified from the Heegaard diagrams.   However, it is difficult to determine whether $\gamma_1\in H_1(M_1;\Z)$, $\gamma_2\in H_1(M_2;\Z)$  presented by Heegaard diagrams for $M_1$, $M_2$, respectively, are identical (in the presence of an assumed equivalence between $M_1$ and $M_2$).  In the present context, the key observation is that   $H_1(S^3 \setminus \Seif \times I;\Z)\cong H_1(\Seif;\Z)$.  This isomorphism equips the former group with a bilinear form; namely, the Seifert form on $\Seif$.  We can use this form to distinguish elements of $H_1(S^3 \setminus \Seif_i \times I)$ obtained as   differences of $\SpinC$ structures supporting non-trivial Floer homology.  Distinguishing these elements, in turn, shows that the sutured manifolds are not equivalent and hence the Seifert surfaces are not isotopic.  We find this second feature particularly interesting, as this is the first instance that the Seifert form has made any real appearance in the context of Heegaard Floer homology.  
\\

\noindent {\bf Acknowledgment:} It is our pleasure to thank David Gabai, Chuck Livingston, and Zolt\'an Szab{\'o} for their interest in this work and many helpful conversations.

\section{Preliminaries}\label{sec:back}
Sutured manifolds were introduced by Gabai in \cite{Gabai}.  They provide a natural framework for constructing taut foliations on three-manifolds via inductive cut-and-paste procedures.  The motivation for taut foliations, in turn, is that they tell us about the Thurston norm of three-manifolds \cite{Thurston86}. In particular, they can be used to determine the genera of knots.  Sutured Floer homology is a generalization of \os \ Floer homology to an invariant of sutured manifolds, and was defined in \cite{sutured}.   Its definition and study were motivated by a desire to clarify and further explore connections between the \os \ invariants and Gabai's theory hinted at by the results in \cite{OSz6}.  In particular, a primary goal was to show that knot Floer homology detects fibered knots \cite{Ghiggini2007,NiFibered,decomposition}.

In this section, we begin by briefly recalling some basic notions from the theory of sutured manifolds.  We then discuss  sutured Floer homology, paying particular attention to sutured Heegaard diagrams.  These diagrams are the input for the sutured Floer homology invariants. Special focus will be given to sutured Heegaard diagrams adapted to a decomposing surface and the way in which decomposition of sutured manifolds is understood in terms of these diagrams.

We refer the reader to \cite{Gabai,Gabai3,Gabai6} for more details on sutured manifolds, and to \cite{sutured, decomposition} for details on sutured Floer homology.

\subsection{Sutured Manifolds}
The cornerstone of Gabai's machinery is the notion of a sutured manifold.
\begin{defn} \label{defn:1}
A   \emph{sutured  manifold}  $(M,\gamma)$   is  a   compact  oriented
3-manifold with  boundary, $(M,\partial M)$,  together with  a  set $\gamma  \subset
\partial  M$   of  pairwise  disjoint  annuli   $A(\gamma)$  and  tori
$T(\gamma).$   Furthermore,  the   interior  of   each   component  of
$A(\gamma)$ contains a \emph{suture}, i.e., a homologically nontrivial
oriented simple  closed curve. We denote  the union of  the sutures by
$s(\gamma).$

Finally,    every   component   of   $\Seif(\gamma)=\partial   M   \setminus
\text{Int}(\gamma)$    is required to be  oriented.    Define    $\Seif_+(\gamma)$   (resp.
$\Seif_-(\gamma)$)  to  be  those  components  of  $\partial  M  \setminus
\text{Int}(\gamma)$ whose normal vectors point out of (resp. into) $M$.  The
orientation  on   $\Seif(\gamma)$  must   be  coherent  with   respect  to
$s(\gamma),$ i.e., if $\delta$  is a component of $\partial \Seif(\gamma)$
and is  given the boundary  orientation, then $\delta$  must represent
the same homology class in $H_1(\gamma)$ as some suture.
\end{defn}

\begin{defn}
Two sutured manifolds $(M_1,\gamma_1), (M_2,\gamma_2)$ are said to be {\em equivalent} if there is an orientation-preserving diffeomorphism $f:M_1\rightarrow M_2$ which restricts to an orientation-preserving diffeomorphism between $\Seif(\gamma_1)$ and $\Seif(\gamma_2).$ \end{defn}

\begin{defn}
A sutured manifold $(M,\gamma)$ is  called \emph{balanced} if M has no
closed components,  $\chi(R_+(\gamma))=\chi(R_-(\gamma)),$ and the map
$\pi_0(A(\gamma)) \to \pi_0(\partial M)$ is surjective.
\end{defn}

The following two examples can be found in \cite{Gabai2}.

\begin{ex} \label{ex:1}
Let $\Seif$ be a compact oriented surface with no closed components.  Then
there is an  induced orientation on $\partial \Seif.$  Let $M=\Seif \times I,$
define  $\gamma  =\partial  \Seif  \times  I,$ and finally  put  $s(\gamma)  =
\partial \Seif \times \{1/2\}.$ The balanced sutured manifold $(M,\gamma)$
obtained  by  this  construction  is called  a  \emph{product  sutured
manifold}.
\end{ex}

 \begin{ex} \label{ex:2}
Let $Y$ be  a closed connected oriented 3-manifold  and let $\Seif \subset
Y$ be a compact oriented  surface with no closed components. We define a sutured manifold
$Y(\Seif)=(M,\gamma)$ to  be the sutured  manifold where $M =  Y \setminus
\text{Int}(\Seif \times  I),$ with the suture  $\gamma = \partial \Seif  \times I.$
 Furthermore $s(\gamma) = \partial \Seif \times \{1/2\}.$
\end{ex}

From the perspective of Floer homology, the following example is also quite relevant.

\begin{ex}\label{ex:knotexterior}
Let $K\subset Y$ be a knot, and let $Y_{2n}(K)=(M,\gamma_{2n})$ denote the sutured manifold with $M=Y\setminus\nu(K)$ the knot exterior, and $s(\gamma_{2n})$ consisting of $2n$ parallel copies of the meridian of $K$, with orientations alternating.
\end{ex}

The key to Gabai's inductive procedures is the concept of a sutured manifold  decomposition, which we now recall.  See  \cite[Definition   3.1]{Gabai}   and
\cite[Correction 0.3]{Gabai6}.  We begin with the notion of a decomposing surface.

\begin{defn} \label{defn:4}
Let $(M, \gamma)$ be  a sutured manifold. A \emph{decomposing surface}
is  an oriented, properly-embedded surface,  $S\subset M$,  such that  no
component  of  $\partial  S$  bounds  a disk  in  $\Seif(\gamma)$  and  no
component of  $S$ is a disk  $D$ with $\partial  D \subset R(\gamma).$
Moreover,  for every  component $\lambda$  of $S  \cap \gamma$  one of
(1)-(3) holds:
\begin{enumerate}
\item $\lambda$ is a  properly-embedded non-separating arc in $\gamma$
satisfying $|\lambda \cap s(\gamma)| = 1.$
\item $\lambda$ is  a simple closed curve in  an annular component $A$
of $\gamma$ in the same homology class as $A \cap s(\gamma).$
\item  $\lambda$  is  a  homotopically  non-trivial curve  in  a  torus
component $T$ of $\gamma,$ and  if $\delta$ is another component of $T
\cap S,$ then $\lambda$ and $\delta$ represent the same homology class
in $H_1(T).$
\end{enumerate}

\bigskip
\noindent A decomposing surface $\Seif$ defines a \emph{sutured manifold decomposition}, denoted

$$\xymatrix{(M, \gamma)\ar@{~>}[r]^-S &(M', \gamma')},$$

\noindent where $$M' = M  \setminus \text{Int}(N(S)),$$ $$\gamma' = (\gamma \cap
M') \cup N(S'_+ \cap R_-(\gamma)) \cup N(S'_- \cap R_+(\gamma)), $$
$$\Seif_+(\gamma')   =  ((R_+(\gamma)  \cap   M')  \cup   S'_+)  \setminus
\text{Int}(\gamma'),$$
$$\Seif_-(\gamma')   =  ((R_-(\gamma)  \cap   M')  \cup   S'_-)  \setminus
\text{Int}(\gamma').$$ Here $S'_+$ (resp.  $S'_-$) is the component of
$\partial N(S) \cap M'$ whose normal vector points out of (resp. into)
$M'.$
\end{defn}

\begin{rem}
In other words, the sutured  manifold $(M', \gamma')$ is constructed by
splitting $M$  along $S,$ creating $\Seif_+(\gamma')$ by  adding $S'_+$ to
what is  left of $\Seif_+(\gamma)$  and creating $\Seif_-(\gamma')$  by adding
$S'_-$  to what  is left  of  $\Seif_-(\gamma).$ Finally,  one creates  the
annuli of $\gamma'$ by ``thickening" $\Seif_+(\gamma') \cap R_-(\gamma').$
\end{rem}

The following lemma indicates that Examples \ref{ex:2} and \ref{ex:knotexterior} are connected by a sutured manifold decomposition.

\begin{lem} \label{lem:4}
Suppose  that $\Seif$ is  a Seifert  surface for  a knot  $K\subset Y.$  Then
$$\xymatrix{Y_{2n}(K)\ar@{~>}[r]^-R & Y(\Seif)}.$$
\end{lem}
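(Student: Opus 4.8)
The plan is to exhibit $\Seif$ (more precisely, its intersection with the knot exterior) as a decomposing surface for $Y_{2n}(K)$ in the sense of Definition \ref{defn:4}, and then to read off that the decomposed sutured manifold is $Y(\Seif)$ by identifying both the underlying manifold and its sutured boundary. The argument will be uniform in $n$; the only place $n$ enters is in the boundary bookkeeping, and I will emphasize why it does not change the answer.

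First I would fix a product neighborhood $\Seif\times[-1,1]\subset Y$ of the Seifert surface together with a tubular neighborhood $\nu(K)$ of $K=\partial\Seif$, arranged so that $\nu(K)\cap(\Seif\times[-1,1])$ is a collar of $K$ inside the surface. Inside the exterior $M=Y\setminus\nu(K)$ of Example \ref{ex:knotexterior}, the surface $\Seif_0:=\Seif\cap M$ is properly embedded with $\partial\Seif_0=\lambda$, the Seifert longitude on $\partial M=T$. Since $\lambda$ meets each of the $2n$ meridional sutures geometrically once, $\Seif_0\cap\gamma_{2n}$ is a disjoint union of $2n$ properly embedded arcs, each crossing its suture exactly once; this is precisely case (1) of Definition \ref{defn:4}. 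The remaining conditions (no boundary component bounding a disk in $\Seif(\gamma_{2n})$, and no disk component with boundary in $\Seif(\gamma_{2n})$) hold because $\lambda$ is essential on $T$ and crosses the sutures, so $\Seif_0$ is a legitimate decomposing surface.

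Next I would identify the cut-open manifold. With a bicollar $N(\Seif_0)=\Seif_0\times[-1,1]$ we have $M'=M\setminus\mathrm{Int}(N(\Seif_0))=Y\setminus\mathrm{Int}(\nu(K)\cup N(\Seif_0))$. The key geometric observation is that, by the choice of neighborhoods above, $\nu(K)\cup N(\Seif_0)$ reconstitutes a product neighborhood $\Seif\times[-1,1]$ of the closed-up surface, so $M'\cong Y\setminus\mathrm{Int}(\Seif\times I)$, the underlying manifold of $Y(\Seif)$ in Example \ref{ex:2}. Under this identification the two copies $S'_\pm=\Seif_0\times\{\pm 1\}$ of the decomposing surface sit inside $\Seif\times\{0,1\}$, with boundary circles $\lambda_\pm=\partial S'_\pm$.

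The heart of the argument, and the step I expect to be the main obstacle, is the boundary bookkeeping: verifying that the $2n$ meridional sutures coalesce into the single annular suture $\gamma'=\partial\Seif\times I$ of $Y(\Seif)$, with $s(\gamma')$ a single longitude, and that $R_\pm(\gamma')$ are the two copies of $\Seif$. Cutting $T$ along $\lambda$ yields the annulus $\partial\Seif\times I$; traversing its core one meets, alternately, the $2n$ meridian strips of $\gamma_{2n}\cap M'$ and the $2n$ regions of $R_\pm(\gamma_{2n})$. Feeding this into the formula of Definition \ref{defn:4}, the new suture is the union of the $2n$ full-height meridian strips, the $n$ bands $N(S'_+\cap R_-(\gamma_{2n}))$ running near $\lambda_+$ over the $R_-$ regions, and the $n$ bands $N(S'_-\cap R_+(\gamma_{2n}))$ running near $\lambda_-$ over the $R_+$ regions. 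Here the alternating orientations of the $2n$ sutures are exactly what forces these bands to chain the meridian strips together into one connected annulus whose core is isotopic to $\lambda$, for every $n$ rather than into several components; meanwhile the leftover halves of the old $R_\pm(\gamma_{2n})$ regions attach along $\lambda_\pm$ to $S'_\pm$, so that $R_\pm(\gamma')$ deformation retract onto $\Seif$. I would record this step with a picture of the boundary torus. Finally I would check that all the identifications respect orientations, so that $M'\cong Y\setminus\mathrm{Int}(\Seif\times I)$ is an \emph{equivalence} of sutured manifolds, establishing $Y_{2n}(K)\rightsquigarrow^{\Seif} Y(\Seif)$.
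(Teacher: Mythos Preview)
Your argument is correct. You correctly verify that the Seifert surface (intersected with the knot exterior) satisfies condition (1) of Definition~\ref{defn:4}, identify the cut-open manifold with $Y\setminus\mathrm{Int}(\Seif\times I)$, and carry out the boundary bookkeeping showing that the $2n$ meridional suture strips, together with the $n$ bands from $N(S'_+\cap R_-)$ along $\lambda_+$ and the $n$ bands from $N(S'_-\cap R_+)$ along $\lambda_-$, chain into a single annulus with longitudinal core. The observation that the alternating orientations of the meridional sutures are exactly what makes the bands interleave correctly (top bands over $R_-$ regions, bottom bands over $R_+$ regions) is the right point to emphasize, and your suggestion to accompany it with a picture of the cut-open torus is sound.

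There is nothing to compare against: the paper states Lemma~\ref{lem:4} without proof, treating it as an elementary consequence of the definitions (it is introduced only by the sentence ``The following lemma indicates that Examples~\ref{ex:2} and~\ref{ex:knotexterior} are connected by a sutured manifold decomposition''). Your write-up supplies precisely the omitted verification, and the approach you take---checking Definition~\ref{defn:4} directly and reading off the formulas for $\gamma'$ and $R_\pm(\gamma')$---is the natural one.
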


\subsection{Sutured Floer homology}

We can associate to a balanced sutured manifold a collection of abelian groups, called the sutured Floer homology groups \cite{sutured}.  These groups are the homology groups of a chain complex, which is defined by a sutured  Heegaard  diagram.  Sutured Heegaard diagrams generalize Heegaard diagrams of closed 3-manifolds so that we can also describe sutured manifolds.

\begin{defn} \label{defn:2}
A   \emph{sutured   Heegaard   diagram}   is  a   tuple   $(   \Sigma,
\boldsymbol{\alpha}, \boldsymbol{\beta}),$ where $\Sigma$ is a compact
oriented   surface   with    boundary   and   $\boldsymbol{\alpha}   =
\{\,\alpha_1,   \dots,   \alpha_m\,\}$   and   $\boldsymbol{\beta}   =
\{\,\beta_1,  \dots, \beta_n\,\}$  are two  sets of  pairwise disjoint
simple closed curves in $\text{Int}(\Sigma).$
\end{defn}

Every  sutured   Heegaard  diagram  $(   \Sigma,  \boldsymbol{\alpha},
\boldsymbol{\beta})$ uniquely  defines a sutured  manifold $(M,
\gamma)$ using  the following construction: Let $M$  be the 3-manifold
obtained from  $\Sigma \times I$ by  attaching 3-dimensional 2-handles
along the  curves $\alpha_i \times  \{0\}$ and $\beta_j  \times \{1\}$
for $i=1,  \dots, m$ and $j=1,  \dots, n.$ The sutures  are defined by
taking  $\gamma =  \partial M  \times  I$ and  $s(\gamma)= \partial  M
\times \{1/2\}.$

\begin{defn} \label{defn:6}
A   sutured   Heegaard   diagram   $(   \Sigma,   \boldsymbol{\alpha},
\boldsymbol{\beta})$      is       called      \emph{balanced}      if
$|\boldsymbol{\alpha}|   =    |\boldsymbol{\beta}|$   and   the   maps
$\pi_0(\partial    \Sigma)   \to   \pi_0(\Sigma    \setminus   \bigcup
\boldsymbol{\alpha})$  and  $\pi_0(\partial  \Sigma) \to  \pi_0(\Sigma
\setminus \bigcup \boldsymbol{\beta})$ are surjective.
\end{defn}

The following is \cite[Proposition 2.14]{sutured}.

\begin{prop}
For  every  balanced  sutured  manifold $(M,\gamma)$  there  exists  a
balanced diagram defining it.
\end{prop}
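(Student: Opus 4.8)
The plan is to construct an explicit balanced sutured Heegaard diagram starting from a given balanced sutured manifold $(M,\gamma)$, by choosing a suitable Morse function and reading off the data. First I would take a Morse function $f\colon M\to[0,3]$, adapted to the sutured structure, with $f^{-1}(0)$ a collar of $R_-(\gamma)$, $f^{-1}(3)$ a collar of $R_+(\gamma)$, $f^{-1}([0,3/2])$ a handlebody-with-boundary built by attaching $1$-handles (cancelling any $0$-handles), and symmetrically $f^{-1}([3/2,3])$ built with $2$-handles (cancelling $3$-handles). Concretely one can first collar $R(\gamma)$ and then triangulate and cancel critical points as in the closed case, the only new feature being the presence of $\partial M$; the annuli $A(\gamma)$ contribute the boundary $\partial\Sigma$ of the Heegaard surface. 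The Heegaard surface is $\Sigma=f^{-1}(3/2)$, a compact oriented surface with boundary (the boundary coming from $\gamma$), and $\boldsymbol{\alpha}$, $\boldsymbol{\beta}$ are the descending/ascending spheres of the index-$1$ and index-$2$ critical points respectively, all lying in $\mathrm{Int}(\Sigma)$.

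Next I would verify that this diagram indeed defines $(M,\gamma)$ via the construction described just before the proposition: attaching $2$-handles along $\boldsymbol{\alpha}\times\{0\}$ and $\boldsymbol{\beta}\times\{1\}$ in $\Sigma\times I$ reconstructs $M$, with the sutures landing correctly because $\partial\Sigma\times I$ is exactly the thickened annular part of $\gamma$ and the two ends $R_\pm$ arise from the two ends of $\Sigma\times I$ after the handle attachments. This is the standard translation between handle decompositions and Heegaard diagrams, adapted to manifolds with boundary; it is essentially bookkeeping once the Morse function has the right form near $\partial M$.

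Then I would check the balancing conditions. The equality $|\boldsymbol{\alpha}|=|\boldsymbol{\beta}|$ follows from $\chi(R_+(\gamma))=\chi(R_-(\gamma))$ together with $\chi(M)=0$ (since $M$ has boundary and no closed components, and an Euler characteristic count of the handle decomposition forces the number of index-$1$ and index-$2$ critical points to agree once $0$- and $3$-handles are cancelled). The surjectivity of $\pi_0(\partial\Sigma)\to\pi_0(\Sigma\setminus\bigcup\boldsymbol{\alpha})$ and $\pi_0(\partial\Sigma)\to\pi_0(\Sigma\setminus\bigcup\boldsymbol{\beta})$ should be arranged during the construction: after cutting $\Sigma$ along all the $\alpha$-curves one gets (a thickening of) $R_-(\gamma)$, and the hypothesis that $\pi_0(A(\gamma))\to\pi_0(\partial M)$ is surjective—part of the definition of balanced sutured manifold—guarantees every component of the cut-open surface meets $\partial\Sigma$; symmetrically for $\boldsymbol{\beta}$ and $R_+(\gamma)$.

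The main obstacle is the first step: producing a Morse function whose sublevel and superlevel sets near the boundary have the prescribed product-collar form, and in particular arranging that all $0$-handles and $3$-handles can be cancelled within the sutured setting without destroying the boundary behavior. One must be careful that the cancellations of auxiliary critical points take place in the interior and do not interfere with the collars of $R_\pm(\gamma)$; handling this cleanly is where the balanced hypothesis (no closed components, $\chi(R_+)=\chi(R_-)$, and surjectivity of $\pi_0(A(\gamma))\to\pi_0(\partial M)$) gets used, and it is the crux of why balancedness is exactly the right condition for the statement to hold. Since this is \cite[Proposition 2.14]{sutured}, I would expect the proof there to carry out precisely this construction, and I would follow that outline.
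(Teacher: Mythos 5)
The paper does not actually prove this statement: it is quoted verbatim as \cite[Proposition 2.14]{sutured} with no argument supplied, so there is no in-text proof to compare against. Your Morse-theoretic construction is, however, precisely the argument of the cited reference: choose a Morse function that restricts to the projection $s(\gamma)\times I\to I$ on $\gamma$, with the minimum level $R_-(\gamma)$ and the maximum level $R_+(\gamma)$, cancel all index $0$ and index $3$ critical points (possible because surjectivity of $\pi_0(A(\gamma))\to\pi_0(\partial M)$ together with the absence of closed components forces every component of $M$ to meet both $R_-(\gamma)$ and $R_+(\gamma)$), and take $\Sigma$ to be the middle level set with the ascending/descending circles as $\boldsymbol{\alpha}$ and $\boldsymbol{\beta}$; your derivation of the surjectivity of $\pi_0(\partial\Sigma)\to\pi_0(\Sigma\setminus\bigcup\boldsymbol{\alpha})$ from the corresponding condition on $A(\gamma)$ is also the intended one. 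One slip to correct: the claim $\chi(M)=0$ is false in general (for the sutured complement of a genus-$g$ Seifert surface, $M$ is a handlebody with $\chi(M)=1-2g$). The correct bookkeeping is $\chi(M)=\chi(R_-(\gamma))-|\boldsymbol{\alpha}|+|\boldsymbol{\beta}|$ from the handle decomposition, together with $\chi(M)=\tfrac{1}{2}\chi(\partial M)=\tfrac{1}{2}\bigl(\chi(R_+(\gamma))+\chi(R_-(\gamma))\bigr)$, which yield $|\boldsymbol{\alpha}|-|\boldsymbol{\beta}|=\tfrac{1}{2}\bigl(\chi(R_-(\gamma))-\chi(R_+(\gamma))\bigr)=0$; so balancedness of the diagram follows from $\chi(R_+(\gamma))=\chi(R_-(\gamma))$ alone, and the spurious auxiliary claim should simply be deleted.
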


In order to understand how $SFH$ behaves under surface decompositions, it is necessary to understand these operations at the level of Heegaard diagrams.  To this end, we have the following definition (Definition $4.3$ of \cite{decomposition}).

\begin{defn} \label{defn:16}
A balanced  diagram \emph{adapted} to  the decomposing surface  $\Seif$ in
$(M,\gamma)$    is   a   quadruple   $$(\Sigma,   \boldsymbol{\alpha},
\boldsymbol{\beta},  P),$$    satisfying  the  following  conditions.
\begin{enumerate}
\item $(\Sigma,  \boldsymbol{\alpha},  \boldsymbol{\beta})$  is  a  balanced
diagram  of  $(M,\gamma).$
\item $P  \subset  \Sigma$  is  a quasi-polygon (i.e., a closed subsurface of $\Sigma$ whose boundary is
a union of polygons) such that $P \cap \partial \Sigma$ is exactly the
set of vertices of $P.$
\item There is a decomposition $\partial P
= A  \cup B,$ where both $A$  and $B$ are unions  of pairwise disjoint
edges  of $P$ satisfying
$\alpha \cap B  = \emptyset$ and $\beta \cap A  = \emptyset$ for every
$\alpha \in  \boldsymbol{\alpha}$ and $\beta  \in \boldsymbol{\beta}.$
\item  $\Seif$  is obtained, up to  equivalence, by smoothing  the corners of
the surface $(P\times \{1/2\}) \cup  (A \times [1/2,1]) \cup (B \times
[0,1/2])  \subset  (M, \gamma)$  (recall the construction following Definition \ref{defn:2}).
\item The orientation of $\Seif$ is given  by the orientation of $P \subset \Sigma.$
\end{enumerate}
\end{defn}

We will frequently refer to a diagram adapted to $\Seif$ as a {\em surface diagram}.  A surface diagram allows us to represent decomposition along $\Seif$ in terms of Heegaard diagrams.  To describe this process, let  $(\Sigma,  \boldsymbol{\alpha},   \boldsymbol{\beta},  P)$  be  a
surface diagram for $\Seif$. To such a diagram, we can uniquely
associate a six-tuple $$D(P)  =  (\Sigma',  \boldsymbol{\alpha}',
\boldsymbol{\beta}',     P_A,    P_B,     p).$$ Here, $(\Sigma',
\boldsymbol{\alpha}', \boldsymbol{\beta}')$ is  a balanced diagram, $p
\colon  \Sigma' \to \Sigma$  is a  smooth map,  and $P_A,  P_B \subset
\Sigma'$ are two closed subsurfaces (see Figure \ref{fig:2}).  We will refer to $D(P)$ as the {\em decomposed diagram}.  $D(P)$ is constructed as follows.

We begin with $\Sigma'$. Let $P_A$
and $P_B$ be two disjoint copies of $P$, together with  diffeomorphisms $p_A \colon  P_A \to  P$ and
$p_B \colon  P_B \to  P.$   Then $$\Sigma' =  P_A \!\! \underset{ p_A^{-1}(A)\leftrightarrow A}\bigsqcup   \!\! \overline{(\Sigma\setminus P) }\!\! \underset{ p_B^{-1}(B)\leftrightarrow B}\bigsqcup \!\! P_B.$$
Thus, $\Sigma'$ is obtained by removing $P$ from $\Sigma$, and then gluing two copies of $P$ to the closure of the remaining surface, one copy glued along its $A$ edges and the other along its $B$ edges.

The  map $p \colon \Sigma' \to \Sigma$
agrees with  $p_A$ on $P_A$ and  $p_B$ on $P_B,$ and  it maps $\Sigma'
\setminus (P_A  \cup P_B)$ to  $\Sigma \setminus P$ using  the obvious
diffeomorphism.

 Finally,     let      $$\boldsymbol{\alpha}'     =
\{\,p^{-1}(\alpha)     \setminus     P_B     \colon     \alpha     \in
\boldsymbol{\alpha}\,\},$$   $$\boldsymbol{\beta}' = \{\,p^{-1}(\beta)
\setminus P_A \colon \beta \in \boldsymbol{\beta}\,\}.$$

Thus $p$ is  $1:1$ over $\Sigma \setminus P,$ is  $2:1$ over $P,$ and
$\alpha$ curves are lifted to $P_A$ and $\beta$ curves to $P_B.$  For the purposes of sutured Floer homology computations it is useful to note that, given a conformal structure on $\Sigma$, there is a unique conformal structure on $\Sigma'$ making $p$ into a conformal map.   The following proposition indicates that the decomposed diagram produces a Heegaard diagram for the sutured manifold obtained by decomposing along $\Seif$.

\begin{prop} \label{prop:1} \lbra\cite[Proposition 5.2]{decomposition}\rbra
\ Let $(M,\gamma)$  be a  balanced sutured manifold  and $$\xymatrix{(M,
\gamma)\ar@{~>}[r]^-S  &(M',\gamma')}$$  a  surface decomposition.  If
$(\Sigma,  \boldsymbol{\alpha},\boldsymbol{\beta},P)$   is  a  surface
diagram adapted to $S$ and if $$D(P) = (\Sigma', \boldsymbol{\alpha}',
\boldsymbol{\beta}',     P_A,     P_B,     p)$$   is the decomposed diagram, then  $(\Sigma',
\boldsymbol{\alpha}',  \boldsymbol{\beta}')$  is  a  balanced  diagram
defining $(M',\gamma').$
\end{prop}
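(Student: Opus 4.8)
The plan is to verify the statement in two stages: first that $(\Sigma',\boldsymbol{\alpha}',\boldsymbol{\beta}')$ is a balanced diagram in the sense of Definition~\ref{defn:6}, and second that the sutured manifold it determines, via the construction following Definition~\ref{defn:2}, is $(M',\gamma')$.

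For the first stage, observe that since $\alpha\cap B=\emptyset$ for every $\alpha\in\boldsymbol{\alpha}$, the preimage $p^{-1}(\alpha)\setminus P_B$ is a single embedded circle in $\mathrm{Int}(\Sigma')$: it consists of $\alpha\cap(\Sigma\setminus P)$ together with the lift of $\alpha\cap P$ into $P_A$, and these pieces match along the seam $A$ exactly as they did in $\Sigma$. Distinct curves of $\boldsymbol{\alpha}$ have disjoint lifts, and the same discussion applies to $\boldsymbol{\beta}'$ after exchanging the roles of $A$ and $B$; hence $|\boldsymbol{\alpha}'|=|\boldsymbol{\alpha}|=|\boldsymbol{\beta}|=|\boldsymbol{\beta}'|$. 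For the $\pi_0$ conditions, note that $\boldsymbol{\alpha}'$ is disjoint from $P_B$ and from the $B$-seam (again because $\boldsymbol{\alpha}\cap B=\emptyset$), so every component of $\Sigma'\setminus\bigcup\boldsymbol{\alpha}'$ that meets $P_B$ also meets $\partial\Sigma'$; the remaining components are identified with those of $\Sigma\setminus\bigcup\boldsymbol{\alpha}$, possibly with extra boundary along the $B$-seam, and these meet $\partial\Sigma'$ because $(\Sigma,\boldsymbol{\alpha},\boldsymbol{\beta})$ is balanced. The surjectivity for $\boldsymbol{\beta}'$ is symmetric.

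For the second stage, use condition~(4) of Definition~\ref{defn:16} to assume that $S$ is literally the corner-smoothing of $(P\times\{1/2\})\cup(A\times[1/2,1])\cup(B\times[0,1/2])$ inside $M$; since isotopic decomposing surfaces yield equivalent decompositions, this loses nothing. This $S$ lies in $\Sigma\times I\subset M$ and is disjoint from the $2$-handles: it meets $\Sigma\times\{0\}$ only in $B\times\{0\}$, which is disjoint from the $\boldsymbol{\alpha}$-attaching curves since $\boldsymbol{\alpha}\cap B=\emptyset$, and it meets $\Sigma\times\{1\}$ only in $A\times\{1\}$, disjoint from the $\boldsymbol{\beta}$-attaching curves. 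Hence $M'=M\setminus\mathrm{Int}(N(S))$ is obtained from $(\Sigma\times I)\setminus\mathrm{Int}(N(S))$ by re-attaching the same $2$-handles along the images of $\alpha\times\{0\}$ and $\beta\times\{1\}$. The key point is then a diffeomorphism $(\Sigma\times I)\setminus\mathrm{Int}(N(S))\cong\Sigma'\times I$: away from $P$ it is the obvious identification, while over $P$ it unfolds the ``shelf'' $S$ into two copies of $P\times I$, the side containing the level-$0$ copy of $P$ (which, since $S$ descends to level $0$ exactly over $B$ and $\boldsymbol{\alpha}\cap B=\emptyset$, is precisely where the arcs $\alpha\cap P$ sit) becoming $P_A\times I$, and the complementary side becoming $P_B\times I$. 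Under this map $\alpha\times\{0\}\mapsto\alpha'\times\{0\}$ and $\beta\times\{1\}\mapsto\beta'\times\{1\}$, so re-attaching the handles recovers exactly the sutured manifold built from $(\Sigma',\boldsymbol{\alpha}',\boldsymbol{\beta}')$.

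It remains to match the sutured data. Under the above identification the two copies $S'_+$, $S'_-$ of $S$ in $\partial M'$ become the $P_A$- and $P_B$-faces, and comparing the prescription for $\gamma$ and $R_\pm$ in the construction following Definition~\ref{defn:2}, applied to $(\Sigma',\boldsymbol{\alpha}',\boldsymbol{\beta}')$, with the formulas for $\gamma'$, $R_+(\gamma')$, $R_-(\gamma')$ in Definition~\ref{defn:4} shows that they agree; the orientation of $S$ matches by condition~(5) of Definition~\ref{defn:16}. I expect the main obstacle to be the diffeomorphism $(\Sigma\times I)\setminus\mathrm{Int}(N(S))\cong\Sigma'\times I$: it is geometrically transparent---cutting along the shelf unfolds $P$ into $P_A\cup P_B$---but carrying it out requires genuine care with the corner-smoothing and, crucially, with recovering the \emph{product} structure $\Sigma'\times I$ so that the attaching curves land in the correct copy ($\boldsymbol{\alpha}$ in $P_A$, $\boldsymbol{\beta}$ in $P_B$), the asymmetry being forced precisely by the conditions $\boldsymbol{\alpha}\cap B=\boldsymbol{\beta}\cap A=\emptyset$.
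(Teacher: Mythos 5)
The paper does not actually prove this proposition: it is imported verbatim from \cite[Proposition 5.2]{decomposition}, so there is no in-paper argument to compare against. Your proposal is, however, essentially the proof given in that cited source: one checks balancedness of $(\Sigma',\boldsymbol{\alpha}',\boldsymbol{\beta}')$ directly from $\boldsymbol{\alpha}\cap B=\boldsymbol{\beta}\cap A=\emptyset$, and then identifies $(\Sigma\times I)\setminus\mathrm{Int}(N(S))$ with $\Sigma'\times I$ by cutting along the shelf $(P\times\{1/2\})\cup(A\times[1/2,1])\cup(B\times[0,1/2])$, observing that $S$ misses the $2$-handles and that the lower piece $P\times[0,1/2]$ (carrying the $\alpha$-arcs, glued to the outside along $A$) unfolds to $P_A$ while the upper piece unfolds to $P_B$. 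You have the orientation bookkeeping right as well: $S'_+$ is the top face of the lower piece and lands in $R_+(\gamma')$, matching Definition~\ref{defn:4}. The one step you flag as delicate --- recovering the product structure after corner-smoothing so that the attaching curves land in the correct levels --- is indeed the only place requiring real care, and it is a routine isotopy/smoothing argument rather than a gap in the idea. So: correct, and the same approach as the cited proof.
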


\begin{figure}[htb]
\begin{center} \resizebox{200pt}{!}{\input{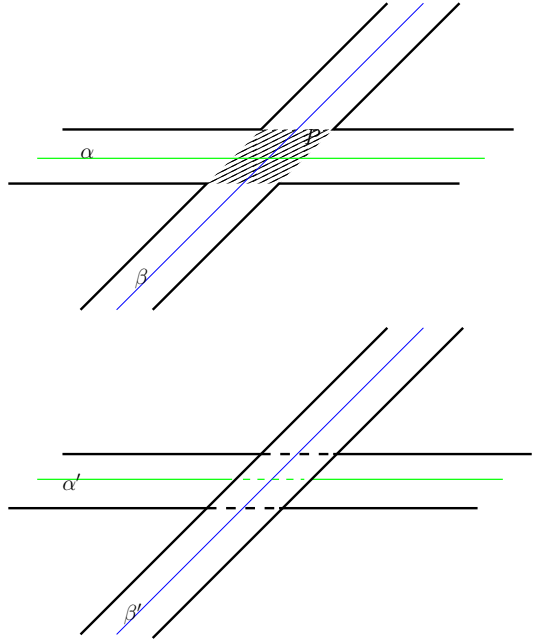}}
\end{center}
\caption{Balanced    diagram    before    and    after    a    surface
decomposition}\label{fig:2}
\end{figure}

\subsubsection{The sutured Floer chain complex}\label{subsec:SFH}
We conclude this section by briefly recalling the definition of the sutured Floer chain complex, and describing the splitting of this complex along relative $\SpinC$ structures.

Given a balanced sutured Heegaard diagram, $(\Sigma,\boldsymbol{\alpha},\boldsymbol{\beta})$, for a balanced sutured manifold, $(M,\gamma)$, one can define a chain complex $(C(\Sigma,\boldsymbol{\alpha},\boldsymbol{\beta}),\partial)$.
As a $\Z/2\Z$ vector space, $C(\Sigma,\boldsymbol{\alpha},\boldsymbol{\beta})$ is generated   by $k$-tuples $\x=x_1\times \dots \times x_k$ of intersection points, where $x_i\in \alpha_i\cap \beta_{\sigma(i)}$ (here, $\sigma$ is a permutation in the symmetric group on $k$ letters and $k=|\boldsymbol{\alpha}|=|\boldsymbol{\beta}|$ is the number of $\alpha$ curves).  If $k=0$, then despite having no curves we have a single generator (for the familiar reader, this is due to the fact that the $0$-th symmetric product of $\Sigma$ is a point, which coincides with the intersection of the two lagrangians).

The chain complex is equipped with a differential $\partial$ which counts points in moduli spaces of certain pseudo-holomorphic maps \cite{HolDisk,Lipshitz}.   To describe this, let us call the connected components of $\Sigma-\boldsymbol{\alpha}-\boldsymbol{\beta}$ {\em regions}, and denote them by $\Dom_1,\dots,\Dom_j.$  Given two generators $\x,\y \in C(\Sigma,\boldsymbol{\alpha},\boldsymbol{\beta})$ consider a linear combination of regions  $$\phi=\sum_{i=1}^{j} n_i \cm \Dom_i$$
which satisfies $\partial (\partial \phi|_\alpha)= \y-\x,$ i.e., the oriented boundary of the $\alpha$ components of $\partial\phi$ consists of the $k$-tuples of intersection points which comprise $-\x$ and $\y.$  If, furthermore, $\phi\cap \partial \Sigma = \emptyset$, we say that $\phi$ is a {\em domain} connecting $\x$ to $\y$.  Let us denote by $\pi_2(\x,\y)$ the set of domains connecting $\x$ to $\y$.

We define an endomorphism $\partial$ of $C(\Sigma,\boldsymbol{\alpha},\boldsymbol{\beta})$ by specifying it on generators:
$$\partial \x = \sum_{\y\in C(\Sigma,\boldsymbol{\alpha},\boldsymbol{\beta})} \sum_{\{\phi \in \pi_2(\x,\y) | \mu(\phi)=1\}} \#\widehat{\Mod}(\phi)\cm \y.
$$
In the formula, $\#\widehat{\Mod}(\phi)$ denotes the number (modulo $2$) of unparameterized pseudo-holomorphic maps of the unit disk $D^2\subset \C$ into the $k$-fold symmetric product of $\Sigma$, satisfying boundary conditions specified by $(\boldsymbol{\alpha},\boldsymbol{\beta},\x,\y)$ and whose homotopy class is determined by $\phi$.  The quantity $\mu(\phi)$ is the Maslov index of the domain, $\phi$, and the condition $\mu(\phi)=1$ is in place to ensure that the count can be performed (i.e., there exist only finitely many).   We refer the reader to \cite{sutured} for more details on the definition of $\partial$, but do call to mind the following important property (see  Lemma $3.2$ of \cite{HolDisk})

\begin{lem}\label{lem:positivity} Let $\phi=\sum_{i=1}^{j} n_i \cm \Dom_i$ be a domain. If $\#\widehat{\Mod}(\phi)\ne 0$, then $n_i\ge 0$ for all $i$.
\end{lem}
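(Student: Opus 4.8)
The plan is to invoke the standard positivity-of-intersections argument for pseudo-holomorphic curves — indeed, the statement is precisely Lemma~3.2 of \cite{HolDisk} — so I will only outline the idea. First I would observe that, since $\#\widehat{\Mod}(\phi)$ is a count modulo $2$ of points in the unparameterized moduli space, its non-vanishing forces the parameterized moduli space to be non-empty: there is a $J$-holomorphic map $u\colon D^2\to\mathrm{Sym}^k(\Sigma)$ with boundary on $\Ta\cup\Tb$ and representing the homotopy class $\phi$. Then, for a fixed region $\Dom_i$, I would choose a point $z\in\interior(\Dom_i)$ lying off every $\alpha$ and $\beta$ curve and away from $\partial\Sigma$, and work with the hypersurface $V_z=\{z\}\times\mathrm{Sym}^{k-1}(\Sigma)\subset\mathrm{Sym}^k(\Sigma)$.

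The first key step is to recognize the multiplicity $n_i$ as an intersection number. Since $z$ avoids $\alpha\cup\beta$, the hypersurface $V_z$ is disjoint from the Lagrangian tori $\Ta$ and $\Tb$, so the algebraic intersection number of any disk with boundary on $\Ta\cup\Tb$ with $V_z$ is a relative-homotopy invariant; unwinding the definition of the coefficients of a domain identifies this invariant with the coefficient $n_i$. Hence $n_i=u\cdot V_z$, regardless of whether the representative $u$ is holomorphic.

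The second key step is positivity. The almost complex structure used to define $\widehat{\Mod}$ is, as part of the standard setup, taken to agree near each $V_z$ with the symmetric product of an honest complex structure on $\Sigma$, so that $V_z$ is pseudo-holomorphic; both $u$ and $V_z$ are then $J$-holomorphic. For generic $z\in\interior(\Dom_i)$ the image of $u$ (at most two real dimensions) is not contained in $V_z$, so $u^{-1}(V_z)$ is a finite set of isolated intersections of a $J$-holomorphic curve with a $J$-holomorphic hypersurface, each of which contributes a strictly positive integer to $u\cdot V_z$. Therefore $n_i=u\cdot V_z\ge 0$, and since $\Dom_i$ was arbitrary the lemma follows.

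The step I expect to be the main obstacle is the genericity in the last paragraph: one must rule out a component of $u$ being entirely swallowed by $V_z$, which is handled by perturbing $z$ within $\interior(\Dom_i)$ and using that a non-constant $J$-holomorphic curve has two-dimensional image. Granting that, the strict positivity of each local intersection number is exactly the Micallef--White/McDuff positivity-of-intersections theorem, applicable precisely because $V_z$ is $J$-holomorphic for the same $J$ as $u$. One also tacitly uses that $u$ stays away from $\partial\Sigma\times\mathrm{Sym}^{k-1}(\Sigma)$, which is consistent with the hypothesis $\phi\cap\partial\Sigma=\emptyset$.
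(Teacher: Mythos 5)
Your argument is correct and is essentially the proof the paper is relying on: the paper does not prove this lemma itself but cites Lemma~3.2 of \cite{HolDisk}, whose proof is exactly your identification of $n_i$ with the intersection number $u\cdot V_z$ for $V_z=\{z\}\times\mathrm{Sym}^{k-1}(\Sigma)$, combined with positivity of intersections for the nearly-symmetric almost complex structures that make $V_z$ pseudo-holomorphic. (One small simplification: since the disk is connected and its boundary lies on $\Ta\cup\Tb$, which is disjoint from $V_z$, the image can never be contained in $V_z$, so no genericity perturbation of $z$ is actually needed.)
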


 For the purposes of computation, it is also useful to know that $\partial$ can be reformulated in terms of counting holomorphic maps of surfaces with boundary (and with marked points on the boundary) into $\Sigma\times D^2$.  This is made precise in \cite{Lipshitz}.

The following is contained in Theorems $7.1$ and $7.5$ of \cite{sutured}

\begin{thm}\label{thm:invariance} Let $(\Sigma,\boldsymbol{\alpha},\boldsymbol{\beta})$ be a Heegaard diagram for the sutured manifold, $(M,\gamma)$, and let
$(C(\Sigma,\boldsymbol{\alpha},\boldsymbol{\beta}),\partial)$ be as above. Then $\partial^2=0$. The resulting homology groups, denoted $SFH(M,\gamma)$, depend only on the equivalence class of sutured manifold.
\end{thm}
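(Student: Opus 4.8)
The plan is to cite and minimally reprove the two invariance statements from \cite{sutured}, treating them separately. For $\partial^2=0$, the standard argument is to examine $\partial^2\x = \sum_\y \left(\sum_{\{\phi\in\pi_2(\x,\y)\mid\mu(\phi)=2\}} \#\partial\widehat{\Mod}(\phi)\right)\cm\y$, where one considers the compactified one-dimensional moduli space $\overline{\Mod}(\phi)$ of holomorphic disks in a class $\phi$ with $\mu(\phi)=2$. The boundary of this compactified space consists of broken flowlines, i.e. pairs $(\phi_1,\phi_2)$ with $\phi=\phi_1*\phi_2$, $\phi_i\in\pi_2(\x,\z)$, $\pi_2(\z,\y)$, each of Maslov index $1$. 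Since a compact one-manifold has an even number of boundary points, the total count over all factorizations vanishes mod $2$, which is exactly the coefficient of $\y$ in $\partial^2\x$. The key inputs are: a Gromov-type compactness theorem for the relevant moduli spaces (the sutured condition ensures no boundary degenerations escape because $\phi\cap\partial\Sigma=\emptyset$, and Lemma \ref{lem:positivity} constrains which classes contribute); transversality, achieved by a generic almost-complex structure or a generic choice of Lipshitz-style perturbation on $\Sigma\times D^2$; and a gluing theorem identifying the ends of $\overline{\Mod}(\phi)$ with products $\widehat{\Mod}(\phi_1)\times\widehat{\Mod}(\phi_2)$. One must also check the sum defining $\partial$ is finite, which follows from the balanced condition together with an energy/area bound argument as in \cite{sutured}.

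For the invariance statement, the strategy is the usual one for Floer-theoretic invariants: any two balanced diagrams for $(M,\gamma)$ are related by a finite sequence of Heegaard moves --- isotopies of the $\alpha$ and $\beta$ curves, handleslides among the $\alpha$ curves and among the $\beta$ curves, and (index-one/two) stabilizations --- and one shows the homology is unchanged under each. Isotopy invariance follows from a continuation-map argument (counting holomorphic disks for a time-dependent almost-complex structure or Hamiltonian isotopy), handleslide invariance from a triangle-counting map associated to a suitable triple diagram, with the composite of the handleslide map and its inverse being chain-homotopic to the identity, and stabilization invariance from a direct identification of chain complexes after adding a cancelling pair of curves meeting in a single point. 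Equivalent sutured manifolds have diffeomorphic diagrams, so the diffeomorphism-invariance is immediate at the level of chain complexes once one knows the move-invariance. All of this is carried out in detail in \cite{sutured}, so here it suffices to indicate that the closed-manifold arguments of \cite{HolDisk} go through verbatim, the only new point being that the sutured Heegaard surface has boundary and all curves lie in the interior, so the moduli spaces behave exactly as in the closed case --- no holomorphic disk can touch $\partial\Sigma$.

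The main obstacle is the analytic package underlying both parts: establishing transversality and compactness for the moduli spaces $\widehat{\Mod}(\phi)$ in the symmetric product $\Sym^k(\Sigma)$ of an open surface, and the gluing theorem at broken flowlines. Since all the $\alpha$ and $\beta$ curves are compactly contained in $\interior(\Sigma)$, the tori $\Ta,\Tb$ are compact Lagrangians in a (non-compact but tame) symplectic manifold, and the key energy estimate guaranteeing that no holomorphic disk approaches the ends comes from the fact that a domain $\phi$ by definition has $\phi\cap\partial\Sigma=\emptyset$ and Lemma \ref{lem:positivity} forces the multiplicities to be non-negative; this is precisely where the sutured hypothesis does real work, and it is handled in \cite{sutured}. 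Consequently I would present the proof as: (i) note finiteness of $\partial$ from balancedness; (ii) invoke the compactness/transversality/gluing results of \cite{sutured} (which adapt those of \cite{HolDisk,Lipshitz}) to get $\partial^2=0$; (iii) invoke the Heegaard-move reduction and the move-by-move invariance arguments of \cite{sutured} for the statement about equivalence classes. In other words, the theorem is quoted from \cite[Theorems 7.1 and 7.5]{sutured}, and the proof sketch only needs to recall the shape of those arguments and highlight that the sutured setting introduces no new difficulties beyond confining all curves to the interior of $\Sigma$.
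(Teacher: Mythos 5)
Your proposal is correct and matches the paper's treatment: the paper offers no proof of this theorem beyond attributing it to Theorems 7.1 and 7.5 of \cite{sutured}, and your sketch accurately recalls the standard arguments carried out there (ends of index-two moduli spaces for $\partial^2=0$, and Heegaard-move invariance via continuation, triangle, and stabilization maps). Nothing in your outline conflicts with the paper's approach.
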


The above theorem suppresses some extra structure which we now discuss; namely, the splitting of sutured Floer homology into subgroups indexed by the set of {\em relative $\SpinC$ structures} on $(M,\gamma)$, which we denote $\RelSpinC(M,\gamma)$.  Indeed, to a generator $\x\in C(\Sigma,\boldsymbol{\alpha},\boldsymbol{\beta})$ one can associate a relative $\SpinC$ structure, $\spinc(\x)\in \RelSpinC(M,\gamma)$, as follows.

 First, pick a Morse function which determines the  Heegaard diagram and whose gradient vector field points  into $M$ along $\Seif_-(\gamma)$,  points out of $M$ along $\Seif_+(\gamma)$,  and which  is the gradient of the height function $s(\gamma)\times I \rightarrow I$ on $\gamma$.  Next, modify the gradient field in a neighborhood of flowlines specified by $x_i\in\x$.  This produces a non-vanishing vector field, $v$, with prescribed behavior on $\partial M$.  The homology class of $v$ (in the sense of Turaev \cite{Turaev}) specifies a relative $\SpinC$ structure, which we denote by $\spinc(\x)$.  The ``relative" terminology arises since we require vector fields to have prescribed behavior on $\partial M$. See Section $4$ of \cite{sutured} for more details.

For our purposes, the most important aspect of $\RelSpinC(M,\gamma)$ is that it is an affine set for $H^2(M,\partial M;\Z)$.  This implies, in particular, that we can talk about the  difference of two relative $\SpinC$ structures, $\spinc(\x)-\spinc(\y)\in H^2(M,\partial M;\Z)$.  Given two generators, $\x,\y$, we can concretely determine $\spinc(\x)-\spinc(\y)$ as follows.  First pick a collection of $k$ oriented sub-arcs of the $\alpha$ curves, $\gamma_{\alpha}\subset \boldsymbol{\alpha}$, which connect the intersection points $x_{i}$ to $y_{i}$.  Similarly, pick a collection of $k$ oriented sub-arcs of the $\beta$ curves, $\gamma_{\beta}\subset \boldsymbol{\beta}$, which connect the intersection points $y_{i}$ to $x_{\sigma(i)}$, for some permutation $\sigma$.  The sum, $\gamma_{\x,\y}=\gamma_{\alpha}+\gamma_{\beta}$, is a collection of oriented closed curves in $\Sigma\subset M$ whose homology class we denote by $\epsilon(\x,\y)\in H_1(M;\Z)$.  The following lemma is quite useful.

\begin{lem}{\label{lem:spincdiff}}(Lemma $4.7$ of \cite{sutured}) Let $\x,\y\in C(\Sigma,\boldsymbol{\alpha},\boldsymbol{\beta})$ be generators. Then $$\spinc(\x)-\spinc(\y) = \mathrm{PD}[\epsilon(\x,\y)]\in H^2(M,\partial M;\Z),$$
where PD$[\epsilon(\x,\y)]$ denotes the Poincar{\'e} dual of $\epsilon(\x,\y)$.
\end{lem}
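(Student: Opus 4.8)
The plan is to trace through the construction of $\spinc(\x)$ and $\spinc(\y)$ at the level of vector fields and identify precisely where they differ. Recall that $\spinc(\x)$ is the Turaev homology class of a nonvanishing vector field $v_\x$ built from a fixed background gradient field $\nabla f$ (with the boundary behavior dictated by the sutured structure) by surgering $\nabla f$ along the $k$ flowlines through the points $x_i\in\x$; similarly for $\spinc(\y)$ using the points $y_i$. Outside a neighborhood of these flowlines, $v_\x$ and $v_\y$ coincide with $\nabla f$. So the first step is to arrange that the modifications producing $v_\x$ and $v_\y$ are supported in disjoint neighborhoods of disjoint flowline-systems, and then observe that the difference of the two Turaev classes is ``localized'' near the union of these flowlines together with the sub-arcs $\gamma_\alpha,\gamma_\beta$ that interpolate between them in $\Sigma$.

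The key computation is the following. Relative $\SpinC$ structures form an affine space over $H^2(M,\partial M;\Z)\cong H_1(M;\Z)$, and the difference of two nonvanishing vector fields $v_\x,v_\y$ that agree near $\partial M$ is measured by a relative obstruction class in $H^2(M,\partial M;\Z)$, computed by comparing $v_\x$ and $v_\y$ over the $2$-skeleton of $M$ rel $\partial M$. Since both fields are obtained from the same $\nabla f$ by surgery along flowlines lying over the $\x$- and $\y$-points, the difference class is represented by a $1$-cycle in $M$ which, after pushing the flowlines down to $\Sigma\times\{1/2\}$ (or, equivalently, noting that each $\alpha_i$ bounds a compressing disk in $M$ on the $0$-side and each $\beta_j$ on the $1$-side), is homologous to the closed curve $\gamma_{\x,\y}=\gamma_\alpha+\gamma_\beta$ obtained by concatenating the $\alpha$-sub-arcs from $x_i$ to $y_i$ with the $\beta$-sub-arcs from $y_i$ to $x_{\sigma(i)}$. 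Poincar\'e--Lefschetz duality then converts this into the asserted identity $\spinc(\x)-\spinc(\y)=\PD[\epsilon(\x,\y)]$ in $H^2(M,\partial M;\Z)$. One should check that the result is independent of the choices of sub-arcs: a different choice of $\gamma_\alpha$ changes it by a multiple of some $[\alpha_i]$, which is null-homologous in $M$ since $\alpha_i$ bounds a disk there, and similarly for $\gamma_\beta$, so $\epsilon(\x,\y)\in H_1(M;\Z)$ is well-defined.

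The main obstacle is the bookkeeping in the vector-field surgery argument: one must verify carefully that the local model for surgering $\nabla f$ along a flowline through an intersection point $x_i\in\alpha_i\cap\beta_{\sigma(i)}$ contributes exactly the expected arc (with the correct orientation) to the difference cycle, and that no extra contribution arises from the regions where flowlines for $\x$ and $\y$ run close to one another or cross the Heegaard surface. This is precisely the content of Lemma~$4.7$ of \cite{sutured}, whose proof applies verbatim in the sutured setting since the modifications are all supported in the interior of $M$, away from $\partial M$; we therefore simply invoke that computation. The remaining verification---that the homology class $\epsilon(\x,\y)$ is unchanged under the admissible choices of connecting sub-arcs---is the routine observation above about $[\alpha_i]$ and $[\beta_j]$ vanishing in $H_1(M;\Z)$.
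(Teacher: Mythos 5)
Your proposal is correct and ends up in the same place as the paper, which offers no proof of its own and simply cites Lemma~$4.7$ of \cite{sutured}; your sketch of the vector-field surgery and the well-definedness of $\epsilon(\x,\y)$ modulo $[\alpha_i]$ and $[\beta_j]$ is a faithful outline of that cited argument. Since you too ultimately defer the local bookkeeping to the reference, there is nothing to flag.
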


The lemma makes clear the claim from the introduction; namely, that $(C(\Sigma,\boldsymbol{\alpha},\boldsymbol{\beta}),\partial)$ splits as a direct sum of complexes which are indexed by relative $\SpinC$ structures.  To see this, first observe that $(C(\Sigma,\boldsymbol{\alpha},\boldsymbol{\beta}),\partial)$ splits into subcomplexes corresponding  to the equivalence classes of the relation
$$ \x \sim \y  \Longleftrightarrow \pi_2(\x,\y)\ne \emptyset.$$
Next, note that if $\phi \in \pi_2(\x,\y) $ then $\epsilon(\x,\y)=[\partial \phi]=0 \in H_1(M;\Z)$.  Thus, if $\x$ and $\y$ are in the same subcomplex, they represent the same $\SpinC$ structure.  Conversely, if $\x$ and $\y$ represent the same $\SpinC$ structure, then $\epsilon(\x,\y)=0\in H_1(M;\Z)$.  In light of the isomorphism, $$H_1(M;\Z)\cong \frac{H_1(\Sigma;\Z)}{ \underset{i}{\mathrm{Span}}([\alpha_i]+[\beta_i])},$$
this implies that after possibly adding some copies of the $\alpha$ and $\beta$ curves to $\gamma_{\x,\y}$, we obtain a collection of curves which are null-homologous in $\Sigma$.   A null-homology  is an element $\phi\in \pi_2(\x,\y)$.

We have the following refinement of the theorem stated above:
\begin{thm}Let $(M,\gamma)$ be a sutured manifold.  Then $$SFH(M,\gamma)= \underset{\spinc\in\RelSpinC(M,\gamma)}\bigoplus SFH(M,\gamma,\spinc)$$  The homology group $SFH(M,\gamma,\spinc)$, depends only on the equivalence class of the sutured manifold and the relative $\SpinC$ structure $\spinc$.
\end{thm}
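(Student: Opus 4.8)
The plan is to deduce this refinement directly from Theorem~\ref{thm:invariance} together with Lemma~\ref{lem:spincdiff} and the chain-level splitting already established in the discussion preceding the statement. First I would make explicit the decomposition of the chain complex: define an equivalence relation on the generating set of $C(\Sigma,\boldsymbol{\alpha},\boldsymbol{\beta})$ by declaring $\x\sim\y$ whenever $\spinc(\x)=\spinc(\y)$, and write $C(\Sigma,\boldsymbol{\alpha},\boldsymbol{\beta},\spinc)$ for the span of generators $\x$ with $\spinc(\x)=\spinc$. The point is that $\partial$ respects this decomposition: if $\phi\in\pi_2(\x,\y)$ contributes to $\partial\x$, then by Lemma~\ref{lem:spincdiff}, $\spinc(\x)-\spinc(\y)=\mathrm{PD}[\epsilon(\x,\y)]=\mathrm{PD}[\partial\phi]=0$, since $\partial\phi$ bounds in $M$ (it is the boundary of the domain $\phi$ pushed into $M$). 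Hence $\partial$ carries $C(\Sigma,\boldsymbol{\alpha},\boldsymbol{\beta},\spinc)$ into itself, and the full complex is the direct sum $\bigoplus_{\spinc}\bigl(C(\Sigma,\boldsymbol{\alpha},\boldsymbol{\beta},\spinc),\partial\bigr)$ over those finitely many $\spinc$ for which the summand is nonzero. Since $\partial^2=0$ by Theorem~\ref{thm:invariance}, each summand is itself a complex, and taking homology commutes with finite direct sums, giving $SFH(M,\gamma)=\bigoplus_{\spinc}SFH(M,\gamma,\spinc)$ with $SFH(M,\gamma,\spinc):=H_*\bigl(C(\Sigma,\boldsymbol{\alpha},\boldsymbol{\beta},\spinc),\partial\bigr)$.

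The remaining issue is well-definedness: a priori the summand indexed by $\spinc$ depends on the Heegaard diagram, and one must check it depends only on the equivalence class of $(M,\gamma)$ and on $\spinc\in\RelSpinC(M,\gamma)$. Here I would invoke the invariance machinery of \cite{sutured} that already underlies Theorem~\ref{thm:invariance}: the chain homotopy equivalences realizing invariance of $SFH$ under the moves relating two balanced diagrams for $(M,\gamma)$ (isotopy, handleslide, stabilization, and the diffeomorphism step) can each be arranged to respect the $\spinc(\x)$ labelling of generators, because those maps themselves are built by counting holomorphic polygons whose domains again force agreement of associated relative $\SpinC$ structures across the maps. Consequently the isomorphism on homology induced by any such equivalence is block-diagonal with respect to the $\RelSpinC$-decomposition and matches up the blocks compatibly with the canonical identification of $\RelSpinC(M,\gamma)$ for the two diagrams. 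This gives a canonical (up to the ambiguities already present for $SFH$ itself) identification of $SFH(M,\gamma,\spinc)$ independent of the diagram.

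I expect the main obstacle to be precisely this last bookkeeping step — verifying that \emph{every} map in the invariance proof is $\SpinC$-graded — rather than anything in the algebra, which is routine. In practice this is handled exactly as in the closed and knot Floer settings: one checks that for a triangle-counting map the three relative $\SpinC$ structures at the vertices are constrained by the domain of the triangle, so the map decomposes along $\SpinC$; the argument for stabilization and for the naturality/diffeomorphism step is even more immediate. Since all of these facts are established in \cite{sutured} (the cited Theorems $7.1$ and $7.5$ are stated there in $\SpinC$-refined form), the proof amounts to assembling them. I would therefore present the argument as: (1) the chain-level splitting via Lemma~\ref{lem:spincdiff}; (2) $\partial^2=0$ on each summand from Theorem~\ref{thm:invariance}; (3) homology of a direct sum is the direct sum of homologies; (4) invariance of each graded piece by the $\SpinC$-refined invariance maps of \cite{sutured}, citing Section~$4$ and Theorems~$7.1$, $7.5$ there. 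This yields the stated decomposition with each $SFH(M,\gamma,\spinc)$ an invariant of the pair $\bigl((M,\gamma),\spinc\bigr)$.
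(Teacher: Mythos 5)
Your proposal is correct and follows essentially the same route as the paper: the chain-level splitting is deduced from Lemma \ref{lem:spincdiff} (the paper phrases it via the equivalence relation $\pi_2(\x,\y)\neq\emptyset$ and notes it coincides with agreement of relative $\SpinC$ structures, which is just the two-directional version of your one-directional argument), and the invariance of each graded piece is delegated to the $\SpinC$-refined invariance results of \cite{sutured}, exactly as you do.
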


One of the most important aspects of sutured Floer homology is its behavior under surface decompositions, which we now describe.  We will need a definition. As above, suppose we have a decomposition $$\xymatrix{(M,
\gamma)\ar@{~>}[r]^-S  &(M',\gamma')}.$$  Let $(\Sigma,\boldsymbol{\alpha},\boldsymbol{\beta},P)$ be a surface diagram for  $S$.  Denote by $\mathcal{O}_P \subset \mathbb{T}_{\alpha} \cap \mathbb{T}_{\beta}$ the subset of generators, none of whose intersection points $x_i\in \x$ are contained in the quasipolygon, $P\subset \Sigma$.  Call such generators {\em outer generators}. The {\em outer complex} $C(\mathcal{O}_P)\subset C(\Sigma,\boldsymbol{\alpha},\boldsymbol{\beta})$ is the subcomplex generated by $\mathcal{O}_P.$ Finally, let $C(\Sigma',\boldsymbol{\alpha}',\boldsymbol{\beta}')$ be the chain complex associated to the decomposed diagram.
 The main result of \cite{decomposition} is the following.

 \begin{thm}\label{thm:decomposition}  The outer complex, $C(\mathcal{O}_P) \subset C(\Sigma,\boldsymbol{\alpha},\boldsymbol{\beta}),$ forms a subcomplex.  Moreover, the homology of this subcomplex is isomorphic to the homology of $C(\Sigma',\boldsymbol{\alpha}',\boldsymbol{\beta}')$.  In particular,
 $$SFH(M',\gamma'):=H_*(C(\Sigma',\boldsymbol{\alpha}',\boldsymbol{\beta}'))\cong H_*( C(\mathcal{O}_P))\le SFH(M,\gamma),$$
\noindent where $\le$ means ``direct summand".
\end{thm}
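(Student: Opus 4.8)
The plan is to prove the statement in four steps: first, identify the outer generators $\mathcal{O}_P$ with the generators of the decomposed diagram; second, show $C(\mathcal{O}_P)$ is a subcomplex by positivity of domains; third, match the two differentials using the conformal covering $p\colon\Sigma'\to\Sigma$; and fourth, deduce the direct-summand claim from the relative $\SpinC$ splitting. For the first step, note that by the construction of $D(P)$ the curves $\boldsymbol{\alpha}'$ miss $P_B$ and $\boldsymbol{\beta}'$ miss $P_A$, so every point of $\Ta\cap\Tb$ in the decomposed diagram lies over $\Sigma\setminus P$, where $p$ restricts to a diffeomorphism; and since in Definition \ref{defn:16} the $\boldsymbol{\alpha}$ curves meet $\partial P$ only along $A$ while the $\boldsymbol{\beta}$ curves meet it only along $B$, no point of $\boldsymbol{\alpha}\cap\boldsymbol{\beta}$ can lie on $\partial P$. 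Hence a generator of $C(\Sigma,\boldsymbol{\alpha},\boldsymbol{\beta})$ is outer precisely when all of its points lie in $\mathrm{int}(\Sigma\setminus P)$, and $p$ induces a bijection between $\mathcal{O}_P$ and the generator set of $C(\Sigma',\boldsymbol{\alpha}',\boldsymbol{\beta}')$ (a balanced diagram for $(M',\gamma')$ by Proposition \ref{prop:1}). For the second step, suppose $\x\in\mathcal{O}_P$ and $\phi\in\pi_2(\x,\y)$ contributes a term to $\partial\x$, so $\mu(\phi)=1$ and $\widehat{\Mod}(\phi)\neq\emptyset$; by Lemma \ref{lem:positivity} every local multiplicity of $\phi$ is nonnegative. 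If a point of $\y$ lay in $\mathrm{int}(P)$, then $\phi$ would have positive multiplicity on a region contained in $P$; following its support out towards $\partial P$ and using that $\partial\phi$ can cross an edge of $A$ only along an $\alpha$-arc and an edge of $B$ only along a $\beta$-arc (recall $\boldsymbol{\beta}\cap A=\boldsymbol{\alpha}\cap B=\emptyset$, and $\phi$ is disjoint from $\partial\Sigma$, hence vanishes near the corners of $P$), the relation $\partial(\partial\phi|_\alpha)=\y-\x$ together with positivity would force a point of $\x$ into $P$, a contradiction. So $\y\in\mathcal{O}_P$ and $C(\mathcal{O}_P)$ is a subcomplex.

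For the third step, give $\Sigma'$ the conformal structure making $p$ conformal, as noted before Proposition \ref{prop:1}. Because $\boldsymbol{\beta}'$ is disjoint from $P_A$ and $\boldsymbol{\alpha}'$ from $P_B$, the multiplicities over $P$ of a positive class $\phi'\in\pi_2(\x',\y')$ between outer generators are rigidly constrained, and the pushforward $\phi'\mapsto p_*\phi'$ is a bijection $\pi_2(\x',\y')\to\pi_2(\x,\y)$ whose inverse reconstructs, uniquely, how the multiplicity of a domain over $P$ is split between the two copies $P_A$ and $P_B$. This bijection preserves the Maslov index, and since $p$ is holomorphic and the boundary conditions keep holomorphic disks away from the non-injectivity locus of $p$, composition with the induced holomorphic map on $k$-fold symmetric products identifies the corresponding moduli spaces; in particular $\#\widehat{\Mod}(\phi')\equiv\#\widehat{\Mod}(p_*\phi')\pmod 2$. (Alternatively, one may pass to Lipshitz's cylindrical reformulation \cite{Lipshitz} and argue with holomorphic maps into $\Sigma\times D^2$ and $\Sigma'\times D^2$.) It follows that the bijection of the first step is a chain isomorphism $C(\Sigma',\boldsymbol{\alpha}',\boldsymbol{\beta}')\cong C(\mathcal{O}_P)$, and therefore $SFH(M',\gamma')\cong H_*(C(\mathcal{O}_P))$.

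For the fourth step, one checks that whether a generator $\x$ is outer depends only on $\spinc(\x)$ — this is the notion of an $S$-outer relative $\SpinC$ structure, and it follows by tracking the Turaev homology class of the vector field used to define $\spinc(\x)$ (equivalently, using Lemma \ref{lem:spincdiff}, one rules out $\spinc(\x)=\spinc(\y)$ when $\x$ is outer and $\y$ is not). Hence $\mathcal{O}_P$ is a union of relative $\SpinC$ classes of generators, so $C(\mathcal{O}_P)$ is the direct sum of the $\SpinC$-summands $C(\Sigma,\boldsymbol{\alpha},\boldsymbol{\beta},\spinc)$ with $\spinc$ outer; since $C(\Sigma,\boldsymbol{\alpha},\boldsymbol{\beta})$ is the direct sum of all of its $\SpinC$-summands, $H_*(C(\mathcal{O}_P))$ is a direct summand of $SFH(M,\gamma)$, which completes the proof.

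I expect the third step to be the main obstacle: identifying the pseudo-holomorphic disk counts for $\Sigma$ and $\Sigma'$ and, in particular, showing that the contributing domains downstairs are exactly the pushforwards of contributing domains upstairs with matching moduli spaces. This is precisely the place where the decomposition $\partial P = A\cup B$ with $\boldsymbol{\alpha}\cap B=\boldsymbol{\beta}\cap A=\emptyset$ is indispensable, since it is what forces the lift of a domain — and of a holomorphic representative — to be unique. A secondary difficulty is the $\SpinC$-saturation used in the fourth step, which genuinely needs the vector-field description of relative $\SpinC$ structures rather than just the combinatorics of the Heegaard diagram.
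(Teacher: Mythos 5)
Your step 3 is where the proposal genuinely breaks down, and it is exactly the step the paper flags as ``the most challenging part.'' You assert that the pushforward $p_*\colon \pi_2(\x',\y')\to\pi_2(\x,\y)$ is a bijection and that the moduli spaces match because the boundary conditions keep holomorphic disks away from the non-injectivity locus of $p$. Neither claim is justified, and neither holds in the generality you need. A domain $\phi\in\pi_2(\x,\y)$ between outer generators can have positive multiplicity over $P$ (the paper's later Proposition in Section \ref{sec:heegs} has to work to exclude this for domains with holomorphic representatives, and only succeeds for their specific explicit diagrams, using where the sutures sit); such a $\phi$ need not lift to $\Sigma'$ at all, since its multiplicities over $P$ must be distributed between $P_A$ (cut only by $\boldsymbol{\alpha}'$) and $P_B$ (cut only by $\boldsymbol{\beta}'$) compatibly with $\partial(\partial\phi'|_{\alpha'})=\y'-\x'$, and when a lift exists nothing you say identifies the two disk counts. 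Holomorphic disks in $\mathrm{Sym}^k(\Sigma)$ with boundary on $\Ta,\Tb$ are in no way confined to the locus where $p$ is injective. The paper's route is entirely different: following \cite{decomposition}, one first isotopes the surface diagram to a ``nice'' diagram in the sense of \cite{Sucharit}, so that every Maslov index one class with $\widehat{\Mod}(\phi)\ne\emptyset$ is an embedded bigon or rectangle counted via the Riemann mapping theorem and positivity of intersections; one then checks that niceness is inherited by the decomposed diagram and matches the two combinatorial differentials explicitly. Your covering-map shortcut would, if correct, prove the stronger statement of that later Proposition for arbitrary surface diagrams, which is not available.

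Your step 2 is also incomplete as written. Propagating a positive multiplicity from a neighborhood of $y_i\in\mathrm{int}(P)$ out to $\partial P$ does not yield a contradiction: the multiplicities of a positive domain may drop each time one crosses an $\alpha$ or $\beta$ arc inside $P$, and a region meeting an $A$ or $B$ edge of $P$ is not required to have multiplicity zero (only regions meeting $\partial\Sigma$ are), so the support of $\phi$ can die out before forcing a corner of $\x$ into $P$. The paper instead proves the stronger statement that $\pi_2(\x,\y)=\emptyset$ whenever $\x\in\mathcal{O}_P$ and $\y\notin\mathcal{O}_P$: pushing $\gamma_\alpha$ and $\gamma_\beta$ into their respective handlebodies produces a closed curve $\tilde\gamma_{\x,\y}$ meeting $S$ only at the points $y_i\in P$, each with positive sign, so $\epsilon(\x,\y)\ne 0$ in $H_1(M;\Z)$ and no domain exists. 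This one computation simultaneously gives your step 2 (the subcomplex claim) and your step 4 (outer generators occupy their own relative $\SpinC$ classes, hence the direct summand via Lemma \ref{lem:spincdiff}), so the separate vector-field tracking you invoke in step 4 is unnecessary. With that in hand your steps 2 and 4 collapse into one correct argument, and what remains is precisely the differential-matching of step 3, which requires the nice-diagram machinery rather than the conformal covering.
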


\begin{proof} (sketch)
To see that $C(\mathcal{O}_P)$ forms a subcomplex, consider a generator $\y\in (\mathbb{T}_{\alpha} \cap \mathbb{T}_{\beta}) \setminus \mathcal{O}_P.$  We will show that $\pi_2(\x,\y)=\emptyset$ for every $\x\in \mathcal{O}_P.$  This implies that $\y \notin \partial \x $ for every $\x \in \mathcal{O}_P$, and repeating with each  $\y\in (\mathbb{T}_{\alpha} \cap \mathbb{T}_{\beta}) \setminus \mathcal{O}_P$ we see that $\partial C(\mathcal{O}_P)\subset C(\mathcal{O}_P),$ i.e., $C(\mathcal{O}_P)$ is a subcomplex.

To see that $\pi_2(\x,\y)=\emptyset$ for $\x,\y$ as above, consider the collection of curves $\gamma_{\x,\y}=\gamma_\alpha+\gamma_\beta$ connecting $\x$ to $\y$.  Pushing  $\gamma_\alpha$ into the $\alpha$ handlebody and $\gamma_\beta$ into the $\beta$ handlebody, we obtain an oriented collection of curves, $\tilde{\gamma}_{\x,\y}\subset M$.  Note that since $\gamma_\alpha$ is oriented from $x_i$ to $y_i$, each intersection of $\tilde{\gamma}_{\x,\y}$ with the quasipolygon $P$ is positive.  Since the only intersections  $\tilde{\gamma}_{\x,\y}\cap S$ occur in $P$, this shows that $\#_{alg}(\tilde{\gamma}_{\x,\y}\cap S)>0$.  In particular $\epsilon(\x,\y)=[\tilde{\gamma}_{\x,\y}]\ne 0$, showing that $\pi_2(\x,\y)=\emptyset$.

Now it is immediate from the construction of the decomposed diagram that generators of $C(\Sigma',\boldsymbol{\alpha}',\boldsymbol{\beta}')$ are in bijection with $\mathcal{O}_P.$  Indeed, since $\alpha$ and $\beta$ arcs in $P$ lift to  $P_A$ and $P_B$ in the decomposed diagram, respectively, no intersection point $x_i\in P\subset \Sigma$ will lift to an intersection point in $\Sigma'$ (since $P_A\cap P_B=\emptyset$).  Hence any generator $\x$ containing $x_i\in P$ will not lift to a generator for $C(\Sigma',\boldsymbol{\alpha}',\boldsymbol{\beta}')$.  On the other hand, the decomposed diagram is identical to the surface diagram (before decomposition) outside of $P$.  Thus any outer generator lifts to a generator in $C(\Sigma',\boldsymbol{\alpha}',\boldsymbol{\beta}')$.

The most challenging part of the the proof arises when showing that the differential on $C(\Sigma',\boldsymbol{\alpha}',\boldsymbol{\beta}')$ is identical to the differential on $C(\mathcal{O}_P)$ which it inherits as a subcomplex of $C(\Sigma,\boldsymbol{\alpha},\boldsymbol{\beta})$.   To prove this, \cite{decomposition} adapts the algorithm of \cite{Sucharit} for computing Heegaard Floer homology to the context of sutured Floer homology.  By making the surface diagram  ``nice", the count of pseudo-holomorphic curves for each domain $\phi\in \pi_2(\x,\y)$ with $\mu(\phi)=1$ can be done explicitly using the Riemann mapping theorem, together with the fact that pseudo-holomorphic submanifolds of symplectic manifolds intersect positively.  Moreover, for a nice enough surface diagram, the decomposed diagram will also be nice and one can explicitly identify the differentials for the respective complexes.  See \cite{decomposition} for more details.
\end{proof}

As a corollary, one obtains the theorem mentioned in the introduction.

\begin{thm}\label{thm:topterm}
Let $\Seif$ be a Seifert surface for a knot $K\subset S^3$.  Then
 $$SFH(S^3(\Seif)) \cong \widehat{HFK}(K,g(\Seif)),$$
where the right hand side is the knot Floer homology group of $K$ supported in Alexander grading $g(\Seif)$ \cite{OSz3}.
\end{thm}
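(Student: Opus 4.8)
The plan is to realize $S^3(\Seif)$ as a surface decomposition of the knot exterior and then feed Theorem~\ref{thm:decomposition} into the known identification of $SFH$ of the knot exterior with knot Floer homology. First recall, from \cite{sutured}, that the sutured manifold $S^3_2(K)$ of Example~\ref{ex:knotexterior} (taking $Y=S^3$ and $n=1$, i.e.\ the knot exterior with two oppositely oriented meridional sutures) satisfies $SFH(S^3_2(K))\cong\widehat{HFK}(K)$: removing open disks about the two basepoints of a doubly pointed Heegaard diagram for $(S^3,K)$ produces a balanced sutured Heegaard diagram for $S^3_2(K)$ with the same generators, differential, and $\SpinC$ decomposition. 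Under this identification the affine $\Z$-grading on $\RelSpinC(S^3_2(K))$ (which is affine over $H^2(M,\partial M;\Z)\cong H_1(S^3\setminus\nu(K);\Z)\cong\Z$) is, up to an overall shift, the Alexander grading, so that $SFH(S^3_2(K),\spinc_a)\cong\widehat{HFK}(K,a)$ for $a\in\Z$.

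Next, by Lemma~\ref{lem:4} with $n=1$ the Seifert surface $\Seif$ defines a surface decomposition of $S^3_2(K)$ along $\Seif$ with result $S^3(\Seif)$. I would fix a surface diagram $(\Sigma,\boldsymbol{\alpha},\boldsymbol{\beta},P)$ adapted to $\Seif$ in the sense of Definition~\ref{defn:16}; such diagrams exist, and Section~\ref{sec:heegs} produces explicit ones. Theorem~\ref{thm:decomposition} then gives $SFH(S^3(\Seif))\cong H_*(C(\mathcal{O}_P))$, the homology of the outer subcomplex $C(\mathcal{O}_P)\subseteq C(\Sigma,\boldsymbol{\alpha},\boldsymbol{\beta})$, realized as a direct summand of $SFH(S^3_2(K))\cong\widehat{HFK}(K)$. (If $\mathcal{O}_P=\emptyset$ then this forces $\Seif$ to be of non-minimal genus and both sides of the claimed isomorphism vanish, so assume henceforth $\mathcal{O}_P\neq\emptyset$.)

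It remains to show that $C(\mathcal{O}_P)$ is concentrated in, and fills out, the Alexander grading $g(\Seif)$. Since the $\SpinC$ splitting of $C(\Sigma,\boldsymbol{\alpha},\boldsymbol{\beta})$ is a splitting into subcomplexes and $C(\mathcal{O}_P)$ is itself a subcomplex, $C(\mathcal{O}_P)=\bigoplus_a\big(C(\mathcal{O}_P)\cap C_a\big)$, where $C_a$ is the summand in $\spinc_a$. Reusing the mechanism from the proof sketch of Theorem~\ref{thm:decomposition}: for $\x\in\mathcal{O}_P$ outer and $\y\notin\mathcal{O}_P$, the curve $\tilde{\gamma}_{\x,\y}$ meets $\Seif$ only inside $P$, and positively, so the image of $\epsilon(\x,\y)$ in $H_1(S^3_2(K))\cong\Z$ has positive intersection number with $\Seif$; by Lemma~\ref{lem:spincdiff} and the normalization above this forces $\mathrm{Al}(\x)>\mathrm{Al}(\y)$. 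Hence all outer generators share a single Alexander grading $a_0$, equal to the maximal Alexander grading occurring in the diagram, and every generator with grading $a_0$ must be outer (else an outer $\x$ would give $a_0=\mathrm{Al}(\x)>\mathrm{Al}(\y)=a_0$). So $C(\mathcal{O}_P)=C_{a_0}$ and $SFH(S^3(\Seif))\cong\widehat{HFK}(K,a_0)$.

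The remaining, and genuinely hardest, step is to prove $a_0=g(\Seif)$; this is the real content of Theorem~\ref{thm:topterm} beyond Theorem~\ref{thm:decomposition}. My preferred route invokes the $\SpinC$-refined form of Theorem~\ref{thm:decomposition} from \cite{decomposition}: the set of relative $\SpinC$ structures of $S^3_2(K)$ appearing in $C(\mathcal{O}_P)$ is the set of ``outer'' $\SpinC$ structures of the decomposition along $\Seif$, which depends only on $\Seif$ (a unit vector field represents such a structure precisely when it is nowhere the reverse of the positive normal of $\Seif$). For a Seifert surface this set is a single structure, and an adjunction-type computation---using $\langle c_1(\spinc_a),[\widehat\Seif]\rangle=2a$ and $\chi(\widehat\Seif)=1-2g(\Seif)$ for the capped-off surface $\widehat\Seif$, with the orientation of $\Seif$ fixed by condition~(5) of Definition~\ref{defn:16}---pins it to the extremal value $a=g(\Seif)$. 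An alternative that avoids the refined theorem is to verify $a_0=g(\Seif)$ by hand for one convenient pair $\big(\Seif,(\Sigma,\boldsymbol{\alpha},\boldsymbol{\beta},P)\big)$, choosing $P$ so that the generator meeting all of $P$ visibly realizes the Euler-characteristic bound; by invariance of both sides one such verification suffices, though it is less conceptual. The first three moves are essentially bookkeeping over Theorem~\ref{thm:decomposition} and Lemma~\ref{lem:spincdiff}; the work is in identifying $a_0$ with $g(\Seif)$.
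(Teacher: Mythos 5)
Your proposal follows essentially the same route as the paper: realize $S^3(\Seif)$ by decomposing $S^3_2(K)$ along $\Seif$ (Lemma \ref{lem:4}), apply Theorem \ref{thm:decomposition} to identify $SFH(S^3(\Seif))$ with the homology of the outer complex, and pin down the Alexander grading of the outer generators via the evaluation $\langle c_1(\spinc(\x)),[\Seif,\partial \Seif]\rangle=\chi(\Seif)-1+\#\{x_i\in\x \mid x_i\in P\}$ --- exactly the Chern-class computation you flag as the ``genuinely hardest step.'' The only difference is cosmetic: with the paper's orientation conventions the outer generators land in Alexander grading $-g(\Seif)$ (the bottom, not the top), and the proof closes by invoking the symmetry $\widehat{HFK}(K,g)\cong\widehat{HFK}(K,-g)$, so you should make sure your relative-positivity argument and your absolute normalization use a consistent orientation of $\Seif$.
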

\begin{proof} (sketch) By Lemma \ref{lem:4} if we decompose $S^3_2(K)$ along $R$ we get $S^3(R).$ Let $(\Sigma,\boldsymbol{\alpha},\boldsymbol{\beta},P)$ be a surface diagram adapted to $R.$ The Alexander grading of a generator $\x\in C(\Sigma,\boldsymbol{\alpha},\boldsymbol{\beta}) \cong \widehat{CFK}(K)$ can be defined as
$$ \OneHalf \langle c_1(\spinc(\x)),[\Seif,\partial \Seif] \rangle, $$
where $c_1(\spinc(\x))\in H^2(S^3\setminus N(K), \partial;\Z)$ is the relative Chern class of a relative $\SpinC$ structure associated to $\x$ and $[\Seif,\partial \Seif]\in  H_2(S^3\setminus N(K), \partial;\Z)$ is the homology class of the surface.  This evaluation, in turn, can be computed as $$\chi(\Seif)-1+ \#\{x_i\in \x  | x_i\in P\},$$
where $P$ is the quasi-polygon representing $\Seif$ (see the proof of Theorem $5.1$ of \cite{OSz3} for motivation of this formula and \cite{decomposition} for precise details).    Together with Theorem \ref{thm:decomposition}, this shows that
  $$SFH(S^3(\Seif)) \cong \widehat{HFK}(K,-g(\Seif)).$$
However, $ \widehat{HFK}(K,g(\Seif))\cong \widehat{HFK}(K,-g(\Seif))$ by Proposition $3.10$ of \cite{OSz3}.
\end{proof}

\section{Constructing Heegaard diagrams adapted to a Seifert surface}
\label{sec:heegs}
Given a a Seifert  surface $\Seif$ for a knot $K\subset Y$,  we  wish to compute the sutured Floer homology groups, $SFH(Y(\Seif))$.  Since these groups are the homology of a chain complex associated to a balanced sutured Heegaard diagram  for  $Y(\Seif)$, it is necessary to produce such a diagram.  To do this, recall that $Y(\Seif)$ is obtained from the knot complement $Y_{2n}(K)$ by decomposition along $\Seif$.   According to Section \ref{sec:back}, it suffices to produce a balanced diagram for  $Y_{2n}(K)$ which is adapted to $\Seif$.  From there it is simple to obtain the decomposed  diagram.  This section will be dedicated to producing surface diagrams and clarifying the  decomposed diagram which, by Proposition \ref{prop:1},  will necessarily be adapted to $Y(\Seif)$.  Throughout we assume the genus of $\Seif$ to be $g$.

A surface diagram for $\Seif$ is, by definition, a Heegaard diagram for the sutured manifold associated to the knot complement which contains $\Seif$ as a quasi-polygon.  Building this diagram requires two main  steps:

\begin{enumerate}

\item Construct a Heegaard diagram $(\Sigma,\boldsymbol{\alpha},\boldsymbol{\beta} )$ for  the closed three-manifold, $Y$, which contains $\Seif$ as proper subsurface, $\Seif\subset \Sigma$.
\item Remove disks from $\Sigma$ along $\partial \Seif$, and modify the diagram by a sequence of isotopies 	and/or stabilizations to ensure that the diagram specifies $Y_{2n}(K)$ and is adapted to $\Seif$.
\end{enumerate}

For any given Seifert surface, there are many ways to perform each step so we remain intentionally vague for the moment.  The next two subsections discuss each step in detail. Indeed, for both steps we treat the case of a Seifert surface presented in an arbitrary manner.  This has the advantage of being completely general and should thus be useful in a variety of situations.

For the sake of clarity, the third subsection presents explicit diagrams for the case  of knots in $S^3$ with Seifert surfaces  presented in a particularly appealing form. The presentation is analogous to a knot projection, and the surface diagrams which we produce can be viewed as the analogue of the diagrams used by \ons \ in \cite{OSz9} which connect the knot Floer homology chain complexes to Kauffman states.


\subsection{Constructing a diagram for $Y$ containing $\Seif$}\label{subsec:surfaceinheeg}
Let $\Seif\subset Y$ be an embedded surface with non-empty boundary.  We now describe the construction of a  Heegaard diagram $\HD$ which contains $\Seif$ as an embedded, proper subsurface of $\Sigma$.
Similar diagrams have been useful in other contexts \cite{OSz3,OSz8,Yi}

\bigskip

Begin with $\Seif$.  Now thicken to obtain $\Seif\times I$.  This is a handlebody of genus $2g$.  We can  represent a basis for $H_1(\Seif,\partial \Seif;\Z)$ by $2g$ pairwise disjoint properly embedded arcs, $\gamma_i$, $i=1,\dots, 2g$.    Observe that $\gamma_i\times I$ is a properly embedded disk in the handlebody $\Seif\times I$.  Let $\beta_i=\partial(\gamma_i\times I)$.   Note that $\partial (\Seif\times I)$ consists of two copies of $\Seif$,  glued along $\partial \Seif$.   Thus $\partial(\Seif\times I)$ clearly contains $\Seif$ as an embedded proper subsurface.

Now $Y\setminus\Seif\times I$ is not necessarily a handlebody (indeed, it will be a handlebody precisely when $\Seif$ is a so-called {\em free} Seifert surface).  By adding a collection of three-dimensional $1$-handles, $\{h_i\}_{i=1}^k$  to $\Seif \times I$ we can ensure that $$H_\alpha = Y\setminus(\Seif\times I \cup h_1 \cup \dots \cup h_k)$$ is a handlebody (of genus $2g+k$).  Without loss of generality, we may assume that the feet of the $1$-handles lie on $\Seif\times \{0\}$, so that $\Seif=\Seif\times \{1\}\subset \partial H_\alpha$ is still embedded. Let $\beta_i$ denote the belt spheres of the $1$-handles, $i=2g+1,\dots,2g+k$.

Finally, pick a collection $\{\alpha_i\}_{i=1}^{2g+k}$ of linearly independent curves on $\Sigma_{2g+k}=\partial H_\alpha$ which bound disks in $H_\alpha$. Then $$(\Sigma_{2g+k}, \{\alpha_1,\dots,\alpha_{2g+k}\}, \{\beta_1,\dots,\beta_{2g+k}\})$$
is the desired Heegaard diagram.

\begin{rem} \label{rem:surfacecomp} Note that, by construction, the Heegaard diagram $$(\Sigma_{2g+k}, \{\alpha_1,\dots,\alpha_{2g+k}\}, \{\beta_{2g+1},\dots,\beta_{2g+k}\})$$ \noindent (i.e., the Heegaard diagram of the lemma without the first $2g$ $\beta$ curves)
specifies $Y\setminus\Seif\times I$.
\end{rem}

\subsection{Turning the diagram into a surface diagram}\label{subsec:algorithm}

Having produced a Heegaard diagram for $Y$ containing  $\Seif$, we now describe how to turn this into a surface diagram for $\Seif$.

Begin with the Heegaard diagram of Subsection \ref{subsec:surfaceinheeg}.  The desired result is achieved through the following algorithm (see Figure \ref{fig:algorithm} for a depiction of the algorithm):
\begin{enumerate}
\item Pick two points $\{z,w\} \subset K=\partial \Seif$.  The points divide $K$ into two arcs, which we label by  $A$ and $B$. Remove  disc neighborhoods $\{D(z),D(w)\}\subset \Sigma$  of the two points and call the resulting surface-with-boundary $\Sigma$.

If $A\cap \beta_i=\emptyset$, $B\cap \alpha_i=\emptyset$ for all $i$ then we are done.  If not, proceed to Step $2$.

\item Without loss of generality, assume $p\in A\cap \beta_i$ (if $p\in B\cap \alpha_i$, the same process applies with the roles of $A$ and $B$, $\alpha$ and $\beta$ reversed). Remove $p$ by one of the following operations.
\begin{itemize}
\item {\bf Isotopy:} Starting from $p$, perform a finger move of $\beta_i$  along the $A$ arc until a boundary component of $\Sigma$ is reached.  If other $\beta$ curves are encountered, perform the finger move to these curves as well to ensure that no intersections among $\beta$  curves are created.  Handleslide $\beta_i$, along with any other curves picked up by the isotopy, over the boundary component of $\Sigma$.

\item {\bf Stabilization:} Choose a  sub-arc, $\gamma$, of the $A$ arc containing $p$, which satisfies $\gamma\cap \alpha_i=\emptyset$ for all $i$.   Subdivide $K$ so that $\gamma$ is labeled $B$ and the two arcs adjacent to $\gamma$ are labeled $A$.  Let $D(\gamma)$ be a neighborhood of $\gamma$.  Adjoin $\partial(D(\gamma))$ to the collection of $\alpha$ curves.  Similarly, pick one of the two $A$ arcs adjacent to $\gamma$, and adjoin the boundary of its neighborhood to the $\beta$ curves. Finally, remove neighborhoods of the endpoints of $\gamma$ from $\Sigma.$
\end{itemize}

\item If $A\cap \beta_i=\emptyset$, $B\cap \alpha_i=\emptyset$ for all $i$ then we are done.  If not, repeat Step $2$.
\end{enumerate}

\begin{prop}  The above algorithm terminates at a surface diagram adapted to $\Seif$.
\end{prop}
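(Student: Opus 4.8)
\emph{Plan.} The statement divides into two claims: that the algorithm halts, and that its output is a surface diagram adapted to $\Seif$, i.e.\ satisfies the five conditions of Definition \ref{defn:16}. I would prove the first via a monovariant that is strictly decreased by each application of Step~2, and the second by checking the five conditions one at a time --- the organizing observation being that the halting criterion of the algorithm is \emph{designed} to be exactly condition (3), and that the remaining conditions hold after Step~1 and are preserved by Step~2.

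\emph{Termination.} The plan is to track the non-negative integer
$$N(\Sigma,\boldsymbol\alpha,\boldsymbol\beta,A,B)\ :=\ \#(A\cap\boldsymbol\beta)\ +\ \#(B\cap\boldsymbol\alpha),$$
the number of points where the $A$-edges of $\partial P$ meet the $\beta$ curves plus the number where the $B$-edges meet the $\alpha$ curves. The algorithm halts exactly when $N=0$, and when $N>0$ one of the two moves of Step~2 can be carried out at a chosen point $p$; I would show each move strictly lowers $N$. For \emph{stabilization}: a short enough sub-arc $\gamma\subset A$ around $p$ misses all $\alpha$ curves (put $\partial\Seif$ in general position so that $p\notin\boldsymbol\alpha$), so relabeling $\gamma$ as a $B$-edge moves $p$ --- and any further $\beta$-crossings on $\gamma$ --- out of the count, lowering it by at least one; and the newly adjoined $\alpha$ curve $\partial D(\gamma)$ and $\beta$ curve (the boundary of a neighborhood of an adjacent $A$-edge) can be taken thin enough to be disjoint from the new $B$-edges and from the remaining $A$-edges respectively, so they add nothing to $N$. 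For \emph{isotopy}: the finger move is an ambient isotopy and leaves $N$ unchanged; the ensuing handleslide of $\beta_i$ --- together with any $\beta$ curves carried along by the finger move --- over a boundary component of $\Sigma$ erases $p$ and every crossing of those curves with the traversed sub-arc of $A$, and, crucially, introduces no new crossings of $\boldsymbol\alpha$ with $B$ (the $\alpha$ curves are not moved) and none of $\boldsymbol\beta$ with $A$; at worst it creates crossings of $\boldsymbol\beta$ with $B$, which are allowed. Either way $N$ drops, and since $N\in\Z_{\ge 0}$ the process terminates.

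\emph{Adaptedness.} Let $(\Sigma,\boldsymbol\alpha,\boldsymbol\beta)$ be the terminal diagram, put $P:=\Seif\cap\Sigma$, and let $\partial P=A\cup B$ be the final labeling. Condition (3) is exactly the halting criterion, hence holds. Condition (1) I would verify as an invariant of the run: the diagram of \S\ref{subsec:surfaceinheeg}, punctured along $\partial\Seif$ as in Step~1, is --- by the construction following Definition \ref{defn:2}, tracing the handle picture and using $\Seif\subset\Sigma$ and Remark \ref{rem:surfacecomp} --- a balanced sutured diagram defining $Y_{2n}(K)$ for an appropriate $n$; each isotopy and handleslide of Step~2 leaves the underlying balanced sutured manifold untouched, while each stabilization keeps $|\boldsymbol\alpha|=|\boldsymbol\beta|$, keeps the maps $\pi_0(\partial\Sigma)\to\pi_0(\Sigma\setminus\bigcup\boldsymbol\alpha)$ and $\pi_0(\partial\Sigma)\to\pi_0(\Sigma\setminus\bigcup\boldsymbol\beta)$ surjective (the new circles bound regions abutting the freshly created boundary components), and preserves the form $Y_{2n}(K)$, to which Lemma \ref{lem:4} applies. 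Condition (2) holds after Step~1 by the choice of the removed disks and is preserved by Step~2, since isotopies do not touch $P$ and each stabilization removes only small disks about the endpoints of $\gamma\subset\partial P$, which become new vertices of $P$. Conditions (4) and (5) hold by construction: after the disk removals, $\Seif$ is, up to equivalence in the sutured manifold, precisely $P$ with its $A$-boundary pushed up toward $\Seif_+(\gamma)$ and its $B$-boundary pushed down toward $\Seif_-(\gamma)$ --- the corner-smoothing of $(P\times\{1/2\})\cup(A\times[1/2,1])\cup(B\times[0,1/2])$, oriented by the orientation of $\Seif\subset\Sigma$ --- and this description is unaffected by the curve isotopies and is preserved by a stabilization, which alters $\Sigma$ and $P$ only in a small region in which the isotopy class of $\Seif$ in the sutured manifold does not change.

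\emph{Main obstacle.} I expect the crux to be the intersection bookkeeping for the isotopy move: showing that ``finger move along $A$, then handleslide over a boundary component'' really removes $p$ and the corresponding crossings of the co-travelling curves while creating no new $A\cap\boldsymbol\beta$ or $B\cap\boldsymbol\alpha$ crossings. This needs a careful local analysis of how a handleslide over a boundary component of $\Sigma$ affects the relevant mod-$2$ intersection pairings, together with control of the cascade of auxiliary finger moves required to keep the $\beta$ curves embedded; it is elementary but carries all the real content of the termination argument. A secondary point is to pin down the precise shape and placement of the disks removed in Step~1 (and at each stabilization) so that $\Seif$ meets every boundary circle of $\Sigma$ in a single point, which is what makes condition (2) literally true.
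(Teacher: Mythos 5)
Your proof is correct, and the termination half is essentially the paper's argument made explicit: the paper simply observes that the initial set $\{A\cap\beta_i,\,B\cap\alpha_i\}$ is finite and that the algorithm therefore halts, which is your monovariant $N$ without the bookkeeping; your careful check that neither move creates new $A\cap\boldsymbol\beta$ or $B\cap\boldsymbol\alpha$ crossings is exactly the content being taken for granted there. Where you genuinely diverge is in the adaptedness half. You verify the five conditions of Definition \ref{defn:16} one at a time, treating ``the punctured diagram defines $Y_{2n}(K)$'' as an invariant preserved by each isotopy, handleslide, and stabilization. The paper instead reinterprets the entire algorithm as producing a \emph{multi-pointed Heegaard diagram} for $K\subset Y$ in the sense of Manolescu--Ozsv\'ath--Sarkar: rather than removing disks, one records the deleted points as basepoint pairs $\{z_i,w_i\}$, observes that the terminal diagram is adapted to $K$, and then invokes a generalization of Example $2.4$ and Proposition $9.2$ of the sutured Floer homology paper to conclude that puncturing at the basepoints yields a balanced diagram for $Y_{2n}(K)$. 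The trade-off is real: the paper's route outsources precisely the step you assert most briskly --- why the diagram obtained after Step~1, and after each stabilization, defines $Y_{2n}(K)$ with the correct meridional sutures --- to an established correspondence between basepoint pairs and meridional sutures, whereas your move-by-move invariance argument is more self-contained but would need the stabilization step (which simultaneously adds an $\alpha/\beta$ pair and two boundary components) justified with the same care you devote to the isotopy move. Both proofs also agree on the final point that $\Seif$ survives as a quasi-polygon whose $A$ and $B$ edges are the arcs produced by the algorithm, up to the local modification of Figure \ref{fig:modify}, which you correctly flag as needed to make condition (2) hold literally.
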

\begin{proof}
As there are only a finite number of points in the initial set $\{A\cap \beta_i,B\cap \alpha_i\}$, it is clear that the above algorithm terminates.   To see that we have produced a surface diagram, first observe that the algorithm can be reinterpreted as an algorithm to convert the original diagram into a multi-pointed Heegaard diagram for $K\subset Y$. Indeed, instead of removing neighborhoods of $\{z,w\}$ and the endpoints of $\gamma$ in Steps $1$ and $2$, respectively, we could simply keep track of these points as pairs $\{z_i,w_i\}$ (and labeling  so that $z$'s are always the initial point of some $A$ arc, oriented by the orientation of $K$).  The resulting multi-pointed Heegaard diagram is then adapted to $K$, in the sense of Definition $2.1$ of \cite{MOS}.  Generalizing Example $2.4$ and Proposition $9.2$  of \cite{sutured} to the case of multi-pointed diagrams shows that removing neighborhoods of the basepoints produces a balanced diagram adapted $Y_{2n}(K)$\footnote{
 Note that $n$ is the number of stabilizations performed in the algorithm +1.}.

Finally, observe that since the original diagram contained $\Seif$ as a proper subsurface, the terminal diagram contains $\Seif$ as a quasi-polygon of the desired form.  Indeed, the $A$  and $B$ edges of the quasi-polygon are the $A$ and $B$ arcs produced by the algorithm.  Strictly speaking, we must make a local modification to these arcs as specified by Figure \ref{fig:modify} to ensure that $\partial \Seif$ is of the appropriate form.

\end{proof}

\begin{figure}
 \psfrag{A}{B}
 \psfrag{B}{A}
  \psfrag{R}{$R$}
  \psfrag{a}{$\beta$}
  \psfrag{b}{$\alpha$}
  \psfrag{p}{$p$}
  \psfrag{z}{$w$}
  \psfrag{w}{$z$}
  \psfrag{re}{Remove disks}
  \psfrag{is}{Isotopy}
  \psfrag{stab}{ Stabilization}
  \psfrag{1}{$(1)$}
   \psfrag{2}{$(2)$}
   \psfrag{Ks}{$K$}

\begin{center}
\includegraphics[width=450pt]{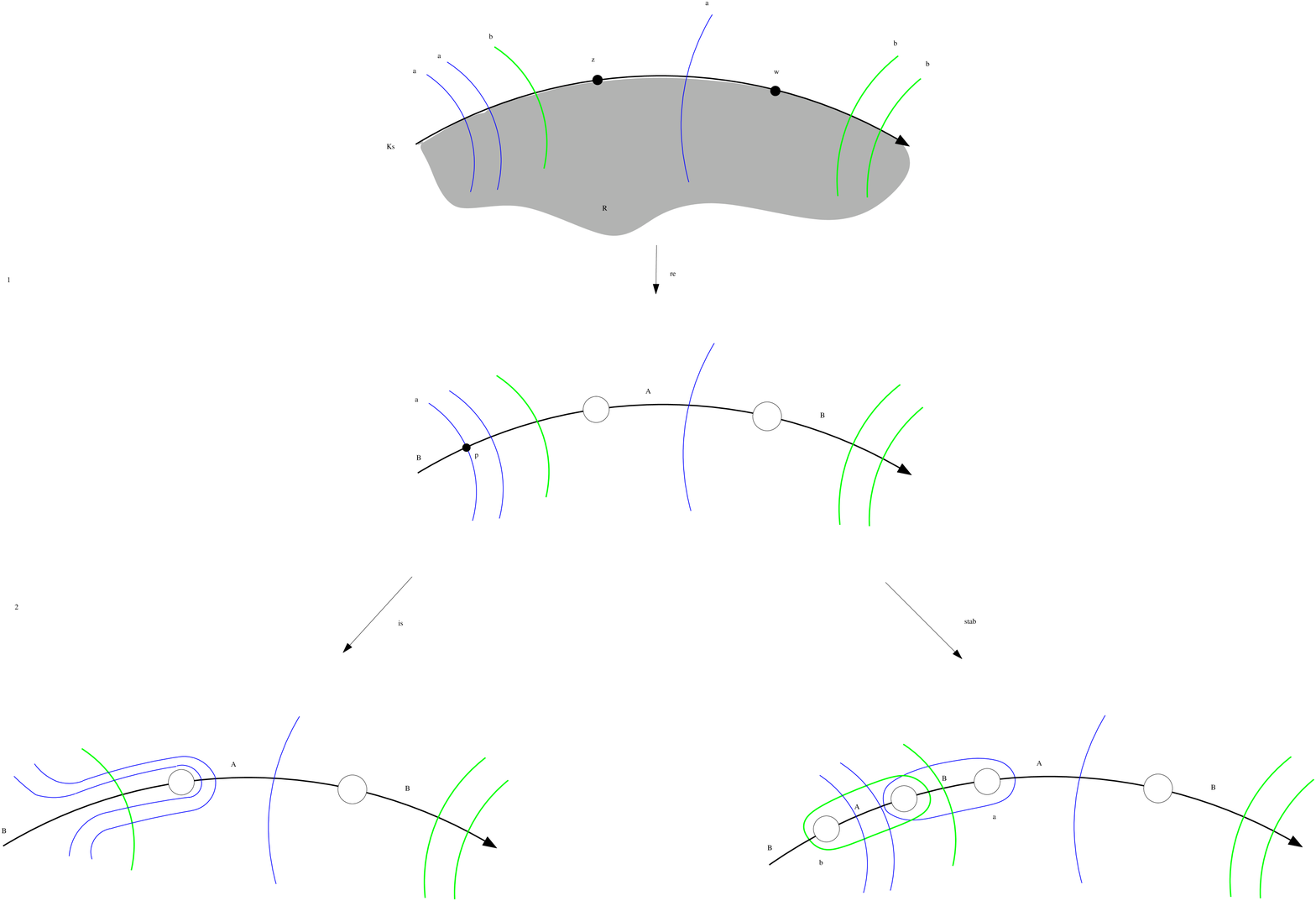}
\caption{\label{fig:algorithm}}
\end{center}
\end{figure}

\begin{figure}
 \psfrag{A}{$A$}
 \psfrag{B}{$B$}
  \psfrag{partial}{\small{$\partial \Sigma$}}
\begin{center}
\includegraphics[width=250pt]{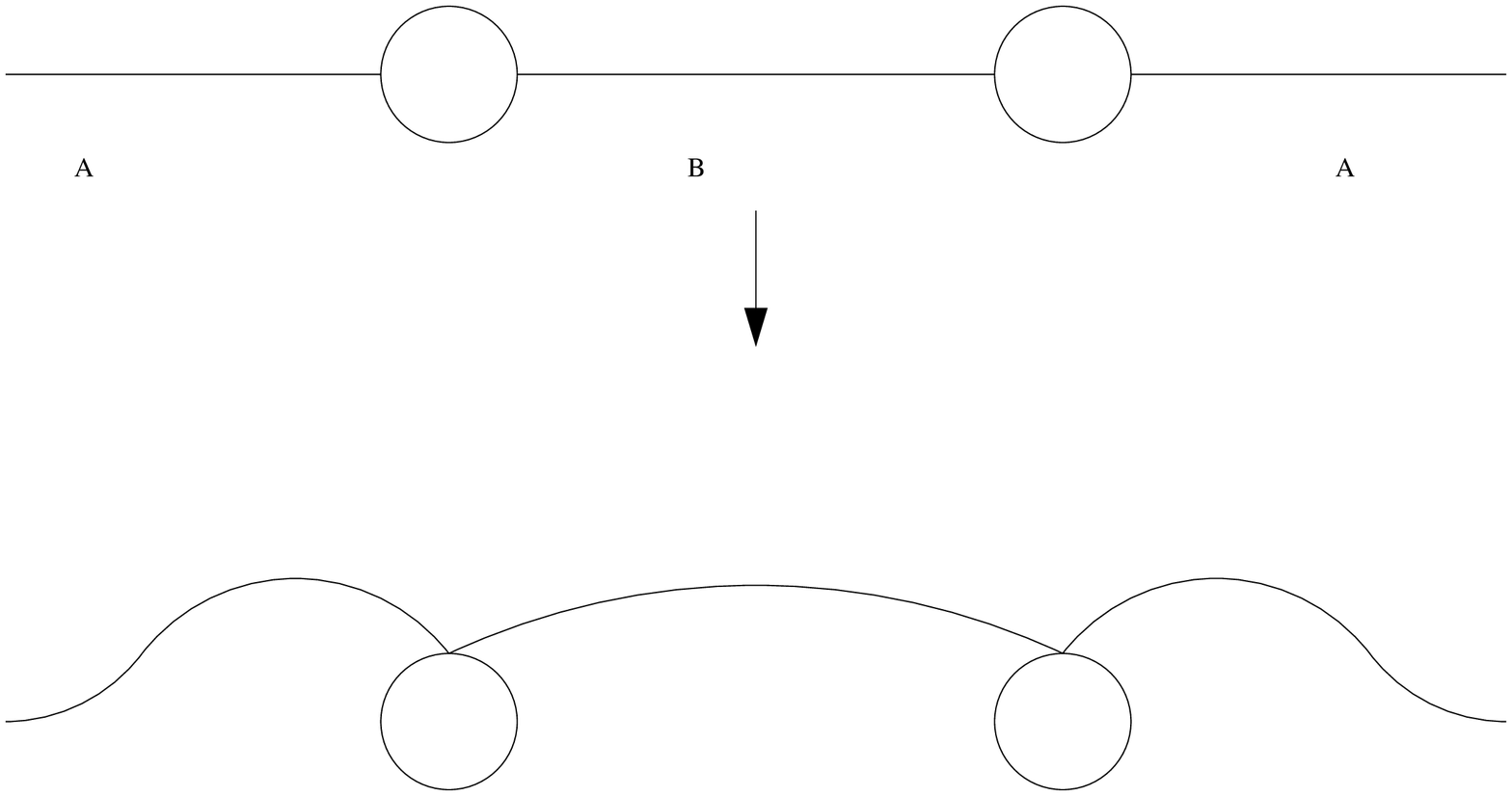}
\caption{\label{fig:modify}}
\end{center}
\end{figure}

\begin{figure}
\psfrag{1}{$1$}
\psfrag{a1}{$\alpha_1$}
\psfrag{a2}{$\alpha_2$}
\psfrag{a3}{$\alpha_3$}
\psfrag{b1}{$\beta_1$}
\psfrag{b2}{$\beta_2$}
\psfrag{b3}{$\beta_3$}

\begin{center}
\includegraphics[width=450pt]{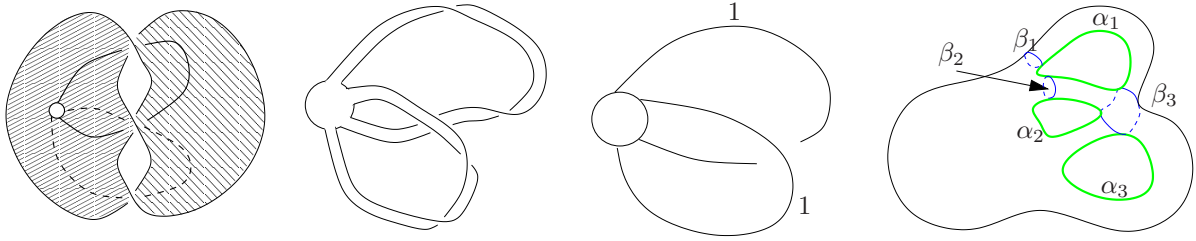}
\end{center}
\caption[section]{Starting from a Seifert surface for the trefoil, we first retract to a neighborhood of a one skeleton possessing a single $0$-cell.  We then encode this surface with a planar diagram, where $1$'s indicate that the corresponding bands have a full right-handed twist.  The third shows the handlebody which results from thickening the diagram.  The thick green curves are $\alpha$-curves specifying the complementary handlebody.  Each band contributes a thin blue $\beta$ curve.  The final $\beta$ curve comes from the crossing in the diagram, according to Figure \ref{fig:crossing} below.}\label{figureone}
\end{figure}

 \begin{figure}
\begin{center}
\includegraphics{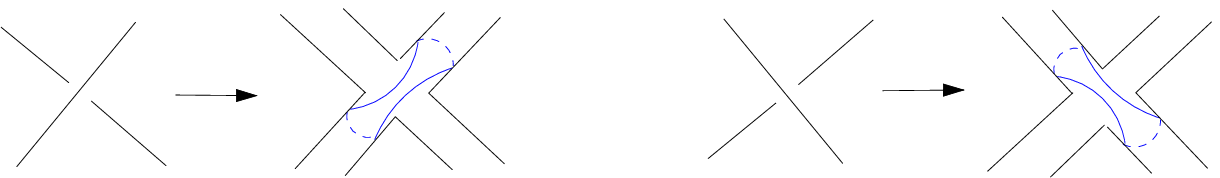}
\caption{\label{fig:crossing}}
\end{center}
\end{figure}

\begin{figure}
\psfrag{h}{Handles}
\psfrag{s}{Sphere}
\psfrag{k}{$K$}
\psfrag{p1}{$p_1$}
\psfrag{p2}{$p_2$}
\psfrag{p3}{$p_3$}
\psfrag{p4}{$p_4$}
\begin{center}
\includegraphics[width=260pt]{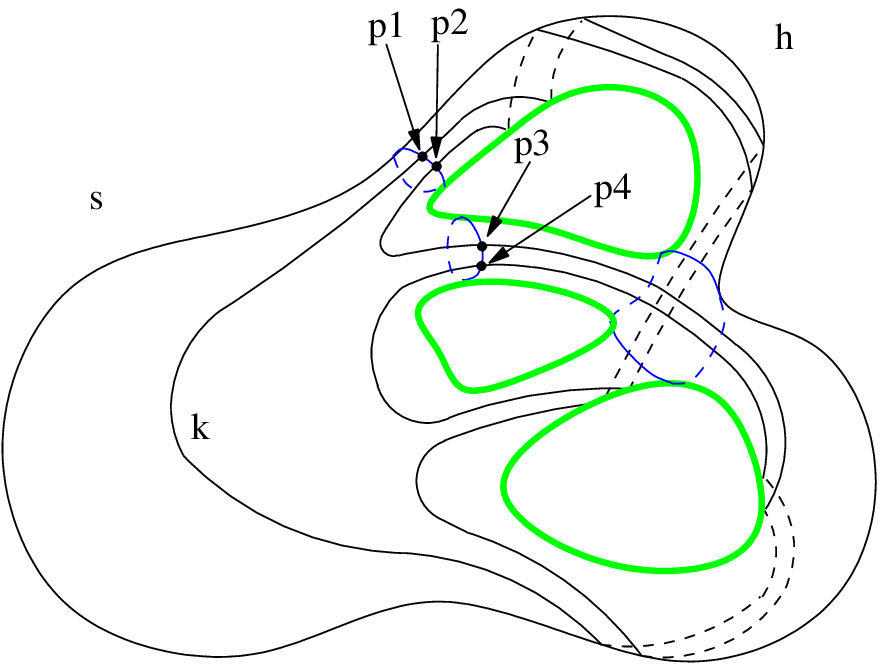}
\caption{\label{fig:trefoilwithknot}}
\end{center}
\end{figure}

\subsection{Explicit Heegaard diagrams for Seifert surfaces in $\boldsymbol{S^3}$} We now describe an explicit diagram adapted to a Seifert surface in $S^3$.

To begin,  we isotope $\Seif$ so that it consists of a disc with $2g$ bands  attached to it.   One way to do this is to note that since $\Seif$ is a  surface with one boundary
component, it is homotopy equivalent to a CW complex with one $0$-cell and $2g$
$1$-cells.  Represent the $0$-cell by a disk neighborhood, $D$, of an interior point.  Represent the $1$-cells by $2g$  disjoint arcs  on $\Seif$,
each  of whose  endpoints  lie in  $\partial  D$. Now let  $F$  be  a  regular
neighborhood  of $D$  and  the  $2g$ arcs.  Then  $\Seif$  deformation
retracts onto $F$ and $\partial \Seif$ stays an embedded
circle  in  $S$ throughout.  This  induces  an isotopy  which  takes  $(\Seif,K)$  to
$(F,\partial F)$. Indeed, if the surface $\Seif$ is presented in some nice
fashion then this gives  an algorithm  to  get such  a pair  $(F,\partial
F)$.   An  example   of   this  construction   is   shown  in   Figure
\ref{figureone} for the minimal genus Seifert surface of the trefoil.


Assuming, then, that $R$ is represented as above, proceed by contracting each band of $\Seif$ to an arc.  We may assume that the resulting disk with $2g$ arcs lies in a subset homeomorphic to $\R^3$ and, moreover, that there exists a plane $\R^2\subset \R^3$ onto which projection yields a planar diagram satisfying:
\begin{itemize}
\item The disc ($0$-handle) of $\Seif$ is embedded in $\R^2$ and no arc is projected to its interior.
\item The arcs have only finitely many transverse double points.
\end{itemize}

Keeping track of the crossing information at the double points, together with the framing of the band corresponding to each arc\footnote{Each band, $b$,  comes with a framing which is $\# \{ \text{full righ-handed twists\ of \ b}\} - \# \{ \text{full left-handed twists\ of \ b}\}.$},  we obtain a planar diagram from which we can recover the  surface, up to isotopy.  Let $k$ be the number of double points (crossings) in this planar diagram.

 Proceed by thickening  the  diagram  in   $\mathbb{R}^2$  to  obtain  a  handlebody, $H_\beta$,  in $\mathbb{R}^3$.  The genus of $H_\beta$ is $2g+k$, and we  let  $\Sigma=\partial H_\beta$  denote its  boundary.  Intersecting  $\Sigma$  with   the  original  plane results in $(2g+k+1)$ circles. Choose any $(2g+k)$ of these to be the $\alpha$  circles.  Clearly, $(\Sigma,\alpha_1,\ldots,\alpha_{2g+k})$ represents the complement of $H_\beta$.

Corresponding to each of the $2g$ arcs we obtain a $\beta$ circle. This circle is the boundary of a disk which intersects the arc in a point close to the disc, $D$.   Label these circles, $\beta_i$, $i=1,\dots,2g$. Finally, to each crossing in  the planar diagram we add a $\beta$ circle according to the convention of Figure \ref{fig:crossing}.  These circles are labeled $\beta_{2g+1},\dots\beta_{2g+k}$.

The resulting diagram $$(\Sigma, \{\alpha_1,\ldots,\alpha_{2g+k}\},\{\beta_{1},\ldots,\beta_{2g+k}\})$$ represents $S^3$ and contains $\Seif$ by construction.  An example of  such a  Heegaard diagram  for the case  of the  trefoil is shown  in Figure  \ref{figureone}.  

Next, we remove $8g$ discs from the Heegaard diagram while simultaneously adding $4g-1$ pairs of $\alpha$ and $\beta$ circles to adapt the diagram to $\Seif$.

To describe this, first observe that the  Heegaard surface  can be divided into two  parts; the {\em handles} and the {\em (punctured) sphere}.  The handles arise from the
boundary  of a  regular  neighborhood  of the  arcs  in the  planar
diagram,  while the sphere comes from the boundary  of  a regular
neighborhood  of the  disc $D.$    Next, note that $K=\partial \Seif$ is naturally embedded in $\Sigma$ in such a way that it does not intersect the circles $\beta_{2g+1},\ldots,\beta_{2g+k}$, and
intersects each  of the  other $\beta$ circles  exactly twice. See Figure \ref{fig:trefoilwithknot}

Up to isotopy in $\Sigma$, we can assume that  $K$ consists of two parallel strands in each  handle. We can further assume that $K$
lies mostly in the top part of $\Sigma$ (namely, the
part of $\Sigma$ lying above  the plane where the diagram of $\Seif$ was  embedded).  It passes to the bottom of $\Sigma$ only in the handles to account for the  framing and crossing information of the bands.  For each of the $4g$ intersection points $\{{p_{2i-1},p_{2i}}\}\in K\cap \beta_i$, $i=1,\dots,2g$, we remove two small discs from $\Sigma$ next to $p$, one on either side of $\beta_i$.  We denote the new surface-with-boundary by $\Sigma'$.    Removing the discs separates $K$ into $8g$ arcs, $\{A_j,B_j\}$, $j=1,\dots,4g$, with  $p_j\in B_j$.  For each $j\ne 4g$, we add a curve,  $\alpha_{2g+k+j}$, which encircles $B_j$ together with the boundary components of $\Sigma$ created by removing the discs near the endpoints of $B_j$.  Similarly, for each $j\ne 4g$ we add a  curve, $\beta_{2g+k+j}$, which encircles  $A_j$ and the boundary components of $\Sigma$ created by removing the discs near the endpoints of $A_j$.

Since $\Sigma$ contained $\Seif$ before the discs were removed, it is straightforward to construct a quasi-polygon, $P\subset \Sigma'$, which satisfies the requirements of Definition \ref{defn:16}.  Indeed, the $A$ and $B$ edges of $P$ are isotopic to the $A$ and $B$ arcs of the previous paragraph, as in Figure \ref{fig:modify}.  We have arrived at a surface diagram for $\Seif$:
$$(\Sigma', \{\alpha_1,\ldots,\alpha_{6g+k-1}\},\{\beta_{1},\ldots,\beta_{6g+k-1}\},P)$$

\noindent See Figure \ref{figtwo} for the diagram adapted to the Seifert surface of the trefoil.

\begin{figure}[htb]
\begin{center}
\includegraphics[width=475pt]{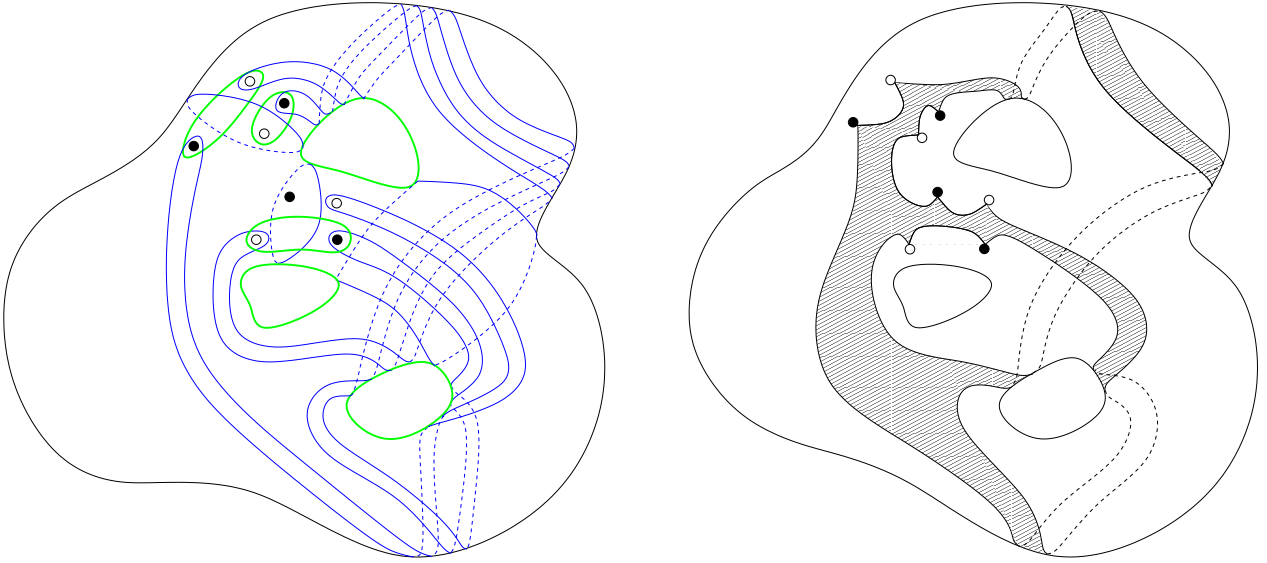}
\end{center}
\caption[section]{The Heegaard diagram}\label{figtwo}
\end{figure}

\begin{figure}[htb]
\begin{center}
\includegraphics[width=375pt]{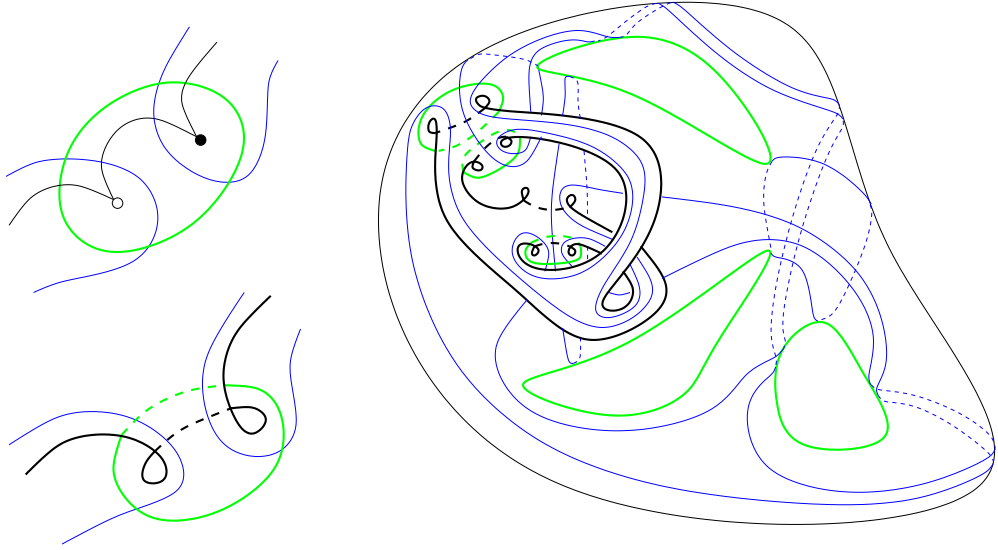}
\end{center}
\caption{The sutured diagram}\label{figthree}
\end{figure}

It is now easy to obtain a balanced diagram for the sutured manifold complementary to $\Seif$:  Delete the interior  of  $P$ from  the Heegaard  diagram and  take two copies of  the subsurface. Delete  all the $\alpha$ arcs  in one of the copies  and identify its $B$-arcs with  the corresponding $B$-arcs
in the Heegaard  diagram. Similarly, delete all the $\beta$  arcs in the other
copy, and identify all its $A$-arcs with the corresponding $A$-arcs in
the Heegaard diagram.  The process is shown locally  in the first part
of Figure  \ref{figthree}. The resulting balanced diagram represents $S^3(\Seif)$.   The Heegaard surface
has  genus $(7g+k-1)$ and $1$  boundary component,  and has
$(6g+k-1)$  $\alpha$  circles  and  $\beta$ circles  each.  The  final
sutured  diagram   for  the  trefoil   example  is  shown   in  Figure
\ref{figthree}.


\begin{figure}

\psfrag{R}{$\Seif$}
\psfrag{t}{$\Seif\times I$}
\psfrag{i}{Isotopy}
\psfrag{b}{$\beta$}
\psfrag{a}{Add $1$-handle}
\begin{center}
\includegraphics[width=450pt]{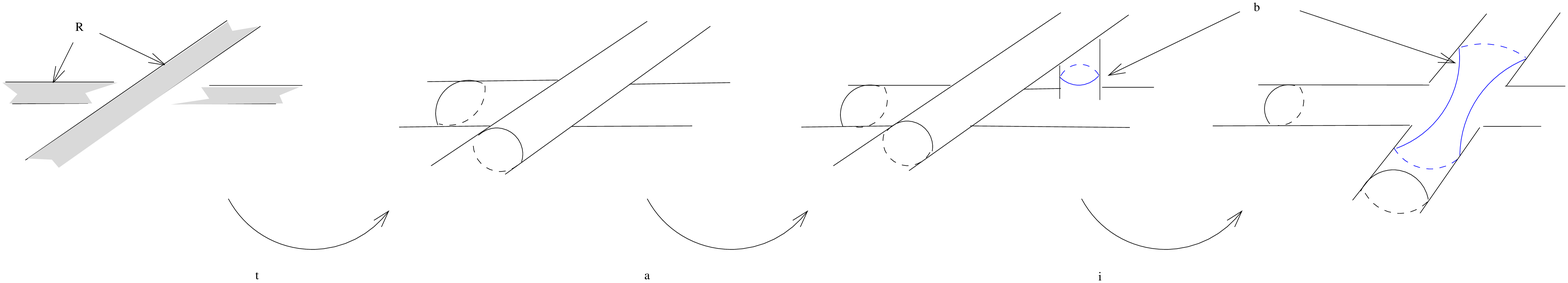}
\caption{\label{fig:crossing2}}
\end{center}
\end{figure}

\begin{rem} The diagram above is a special case of the general construction discussed in the previous two subsections.    The procedure by which we handled the crossing regions of the planar diagram associated to $\Seif$  is equivalent to adding $1$-handles to $\Seif \times I$ to make the complement into a handlebody.  See Figure \ref{fig:crossing2}.  Removing discs and adding $\alpha/\beta$ pairs is simply a specific implementation of the algorithm from Subsection \ref{subsec:algorithm}.   Indeed, the diagram of this subsection can be seen as extremal: at every step in the algorithm we  used a stabilization.  The other extremal case, where we use only isotopies, will be implemented in Section \ref{sec:example} to calculate the sutured Floer homology for the Seifert surfaces of $8_3$.
\end{rem}
\begin{rem}
 The diagram above is usually not the minimal genus possible. Picking a different set of $1$-handles to add to $\Seif\times I$ will frequently lower the genus of the Heegaard diagram significantly.  The minimal number of $1$-handles necessary to add to an embedded handlebody, $H_g$, so that the complement is a handlebody is often referred to as the {\em tunnel number } of $H_g$.
\end{rem}

While we do not use this diagram for the computation in the next section,  it may be of future use to note that the combinatorics of the diagram enable one to calculate $SFH(S^3(\Seif))$ without decomposing the diagram.  More precisely, recall from  the discussion surrounding Theorem \ref{thm:decomposition} that the generators of the chain complex associated to the decomposed diagram are in bijection with the outer generators $C(\mathcal{O}_P)$ on the surface diagram.   Indeed, Theorem \ref{thm:decomposition} shows that the outer generators form a subcomplex of $SFH(S^3_{2n}(K))$ whose homology is isomorphic to $SFH(S^3(\Seif))$.   However,  it is not clear that the obvious bijection between generators of $C(\mathcal{O}_P)$ and $SFH(S^3(\Seif))$ induces the isomorphism on homology.  The differentials on these two complexes could be quite different, as their definition is in terms of quite different Heegaard diagrams.  It is only after altering the surface diagram severely to make it ``nice" that an identification between the differentials is established.  In light of this, it is nice to know that we can compute the homology of $SFH(S^3(\Seif))$, as a relatively $H_1(S^3\setminus \Seif)$ graded group, without decomposing the surface diagram.  Indeed we have the following proposition

\begin{prop} Let $\mathcal{H}=(\Sigma,\boldsymbol{\alpha},\boldsymbol{\beta},P)$ be the explicit surface diagram for a Seifert surface $\Seif\subset S^3$ described above, and let $\mathcal{H}'=(\Sigma',\boldsymbol{\alpha}',\boldsymbol{\beta}')$ be the decomposed diagram.  Denote the associated chain complexes  by $(C(\mathcal{H}),\partial)$ and $(C(\mathcal{H}'),\partial')$. Then the differential on the subcomplex $C(\mathcal{O}_P)\subset C(\mathcal{H})$ generated by outer intersection points is equal to that on $C(\mathcal{H}')$, under the obvious isomorphism of chain groups induced by the bijection of generators.  In particular, the relative $\SpinC$ grading on $SFH(S^3(\Seif))$ can be computed by considering the difference $\epsilon(\x,\y)$ of two generators $\x,\y\in C(\mathcal{O}_P)$ as a $1$-cycle in $H_1(S^3\setminus \Seif)$ \end{prop}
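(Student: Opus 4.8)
The plan is to show that the explicit surface diagram $\mathcal{H}$, or rather a modification of it supported away from the outer generators, is already ``nice'' in the sense of \cite{Sucharit}, so that the count of pseudo-holomorphic disks in domains with $\mu(\phi)=1$ is purely combinatorial, and this combinatorial count agrees with the one for the decomposed diagram $\mathcal{H}'$. The key structural input is Theorem \ref{thm:decomposition}: we already know $C(\mathcal{O}_P)$ is a subcomplex of $C(\mathcal{H})$ and that its generators are in bijection with those of $C(\mathcal{H}')$ via the lift $p$. What remains is the identification of differentials, which in \cite{decomposition} required altering the surface diagram; the content of this proposition is that for \emph{our} explicit diagram no such alteration inside $\Sigma\setminus P$ is needed, because the diagram was built band-by-band out of standard pieces.

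First I would analyze the regions of $\Sigma-\boldsymbol{\alpha}-\boldsymbol{\beta}$ that can appear in a domain $\phi\in\pi_2(\x,\y)$ with $\x,\y\in\mathcal{O}_P$ and $\#\widehat{\Mod}(\phi)\neq 0$. By Lemma \ref{lem:positivity} all multiplicities are non-negative, and by the argument in the sketch of Theorem \ref{thm:decomposition} (pushing $\gamma_{\x,\y}$ into the handlebodies) any such $\phi$ must have multiplicity zero on every region meeting the quasi-polygon $P$ --- otherwise $\epsilon(\x,\y)\neq 0$ and $\pi_2(\x,\y)=\emptyset$. Hence every domain contributing to $\partial$ on $C(\mathcal{O}_P)$ is supported in $\overline{\Sigma\setminus P}$, which is precisely the common region of $\mathcal{H}$ and $\mathcal{H}'$: the map $p$ is a diffeomorphism there, carrying $\boldsymbol{\alpha}'|_{\Sigma'\setminus(P_A\cup P_B)}$ to $\boldsymbol{\alpha}|_{\Sigma\setminus P}$ and likewise for $\boldsymbol{\beta}$. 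Thus the domains, their Maslov indices, and their moduli spaces coincide for the two diagrams, so $\partial|_{C(\mathcal{O}_P)}$ and $\partial'$ agree under the bijection. For the relative $\SpinC$ grading, I would invoke Lemma \ref{lem:spincdiff}: the difference $\spinc(\x)-\spinc(\y)$ is $\mathrm{PD}[\epsilon(\x,\y)]$ computed from $\gamma_{\x,\y}$, which for outer generators lies entirely in $\overline{\Sigma\setminus P}$ and descends to a genuine $1$-cycle in $H_1(S^3\setminus\Seif)\cong H_1(M';\Z)$ under $p$; so the grading is read off directly on $C(\mathcal{O}_P)$ without ever passing to $\mathcal{H}'$.

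The main obstacle is the subtlety already flagged in the paragraph preceding the proposition: \emph{a priori} the two differentials are defined by counting holomorphic curves into symmetric products of genuinely different surfaces ($\Sym^{6g+k-1}(\Sigma)$ versus $\Sym^{6g+k-1}(\Sigma')$), and a domain supported in $\overline{\Sigma\setminus P}$ on $\mathcal{H}$ is not obviously carried to an \emph{admissible} domain on $\mathcal{H}'$ in a way that preserves holomorphic representatives --- the conformal structures, the boundary-degeneration phenomena, and the Maslov index formula all need to be checked to transfer. The cleanest route is to observe that, given a conformal structure on $\Sigma$, there is (as noted in the text) a unique conformal structure on $\Sigma'$ making $p$ conformal, and $p$ restricted to $\overline{\Sigma\setminus P}$ is then a biholomorphism onto its image that carries the Lagrangian tori (outside $P$) to each other; a holomorphic disk in $\Sym^n(\Sigma)$ whose shadow avoids $P$ is literally the same data as one in $\Sym^n(\Sigma')$ whose shadow avoids $P_A\cup P_B$, because the symmetric product construction is local in the surface away from the ramification. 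One must verify that no bubbling into $P$ (or $P_A\cup P_B$) can occur for index-one domains with non-negative multiplicity zero on $P$ --- this follows again from Lemma \ref{lem:positivity}, since a bubble would force positive multiplicity somewhere in $P$ on the total class. I would then assemble these observations into the equality $\partial|_{C(\mathcal{O}_P)}=\partial'$ and the grading statement, citing \cite{decomposition, Sucharit, Lipshitz} for the technical underpinnings of the curve counts.
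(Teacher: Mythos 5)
There is a genuine gap at the heart of your argument. You claim that any domain $\phi\in\pi_2(\x,\y)$ between outer generators with $\#\widehat{\Mod}(\phi)\neq 0$ ``must have multiplicity zero on every region meeting the quasi-polygon $P$,'' and you justify this by appealing to the push-off argument from the sketch of Theorem \ref{thm:decomposition}. That argument is a non-sequitur here: pushing $\gamma_{\x,\y}$ into the handlebodies shows that $\epsilon(\x,\y)\neq 0$ when exactly one of $\x,\y$ is outer, so $\pi_2(\x,\y)=\emptyset$ in that case. It says nothing about the \emph{support} of a domain joining two outer generators. For such a pair, $\epsilon(\x,\y)=0$ and $\pi_2(\x,\y)$ is nonempty, and its elements are perfectly free to have positive multiplicity on regions meeting $P$ (regions of $\Sigma-\boldsymbol{\alpha}-\boldsymbol{\beta}$ do not see $\partial P$, since the $A$- and $B$-arcs are not part of the curve systems, so regions straddle $\partial P$). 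Indeed, the correct statement is weaker and different: the relevant holomorphic domains must be disjoint from the \emph{$B$-arcs} (equivalently, have vanishing multiplicity on the regions adjacent to them); they may still enter $P$ across the $A$-arcs, and the lift to $\Sigma'$ then sends the $P$-portion of the domain into $P_A$, not into $\overline{\Sigma\setminus P}$. Your assertion that the map $p$ is a diffeomorphism on the entire support of every contributing domain is therefore both unproven and stronger than what is true or needed.

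The actual content of the proposition — the reason it is stated for the \emph{explicit} diagram rather than for an arbitrary surface diagram — is the combinatorial argument that forces $\phi\cap\{B\text{-arcs}\}=\emptyset$. This uses Lemma \ref{lem:positivity} together with the specific handle-and-sphere structure of the explicit diagram: the regions containing sutures adjacent to each $B$-arc have multiplicity zero; outer generators have no corners in $P$, so the difference of multiplicities across a $\beta$-curve is constant as one travels along a handle inside $P$; and propagating a putative positive multiplicity along the handle terminates in the disk region of $\Seif$, which also meets a suture, giving the contradiction. None of this appears in your proposal, so the key step is assumed rather than proved. (Your remaining points — that a domain disjoint from the relevant arcs lifts to $\Sigma'$ with matching conformal/almost-complex data so the curve counts agree, and that the $\SpinC$ grading statement then follows from Lemma \ref{lem:spincdiff} — are consistent with the paper's argument, but they only become available after the missing positivity argument is supplied. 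Also note that the paper deliberately avoids invoking niceness in the sense of \cite{Sucharit} here; the point of the proposition is precisely that no such modification of the diagram is required.)
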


\begin{rem} For more details on how to regard the difference $\epsilon(\x,\y)$ of outer generators as an element in  $H_1(S^3\setminus \Seif)$, see Subsection \ref{subsubsec:relativegr} below.
\end{rem}

\begin{proof}
Let $\x,\y \in C(\mathcal{O}_P)\subset C(\mathcal{H})$ denote outer generators; that is, generators whose intersection points lie outside the quasi-polygon, $P$, representing $\Seif$.  Let $\x',\y'\in C(\mathcal{H}')$ denote the corresponding generators for the decomposed diagram.  It suffices to show that \begin{equation}\label{eq:partial}
\y\in \partial \x  \text{ if and only if } \y'\in \partial'\x'
\end{equation} (recall that we are working with $\Z/2\Z$ coefficients).  Let $\phi\in \pi_2(\x,\y)$ be a domain connecting the outer generators.  Proving  \eqref{eq:partial} will be accomplished by showing that  $\widehat{\Mod}(\phi)\ne \emptyset$ implies $\phi\cap \{B\text{-arcs}\}=\emptyset$.  In other words,  the domains which contribute to $\partial$ do not intersect the $B$-arcs on the quasipolygon.  To see why this implies \eqref{eq:partial}, note that any $\phi$ satisfying $\phi\cap \{B\text{-arcs}\}=\emptyset$ can  be thought of as $\phi' \in \pi_2(\x',\y'),$ actually such domains are in one to one correspondence with domains on the decomposed diagram that support holomorphic representatives. Indeed, if $\phi'$ is a domain connecting $\x'$ and $\y'$ that has a holomorphic representative, then projecting this holomorphic map to $\Sigma$ we see that $\phi = p(\phi')$ also has a holomorphic representative, thus $\phi \cap \{B\text{-arcs}\} = \emptyset.$ 
Furthermore, for such domains, an almost complex structure on $\Sigma\times D^2$ achieving transversality for ${\Mod}(\phi)$ can be extended to an almost complex structure (under the embedding of $\Sigma \setminus B \subset \Sigma'$ away from the sutures)  on $\Sigma'\times D^2$ which achieves transversality for  ${\Mod}(\phi')$ (here we are thinking in terms of the cylindrical version of Floer homology \cite{Lipshitz}). In this way we see that if $\phi\cap \{B\text{-arcs}\}=\emptyset$, then $\widehat{\Mod}(\phi)\ne \emptyset$ if and only if $\widehat{\Mod}(\phi')\ne \emptyset.$ Examining each $\phi$, we obtain \eqref{eq:partial}. 

Thus we must only show that $\widehat{\Mod}(\phi)\ne \emptyset$ implies $\phi\cap \{B\text{-arcs}\}=\emptyset$. By contradiction, suppose that $\widehat{\Mod}(\phi)\ne \emptyset$ and $\phi\cap \{B\text{-arcs}\} \neq \emptyset$ for some domain $\phi.$
In  Figure  \ref{figfour},  we   have  marked  various  components  of
$\mathcal{H}\setminus(\bm{\alpha}\cup\bm{\beta})$.      The     quasipolygon
$P$  is  shaded.  As  usual,  the thick  green  circles  are
$\alpha$ circles, and  the thin blue circles are  $\beta$ circles. Let
$n_p$ be the  multiplicity  of $\phi$ in a  region marked $p$.   Recall from Lemma \ref{lem:positivity} that if $\widehat{\Mod}(\phi)\ne \emptyset$, then $n_p\ge 0$ for all $p\in \Sigma$.

Our first observation deals with  the parts of the diagram that appear locally  like the lower left part of the  figure.  We claim that for  $\x,\y\in \mathcal{O}_P$, all domains  satisfy  $n_k-n_i=n_l-n_j$.  This follows from the fact that $\x,\y\in \mathcal{O}_P$ implies there are no points of $\x,\y\in P$, and hence there are no corner points of $\phi$ contained in $P$.

Next we deal with the arcs in $\phi$ which which hit the $B$ arc shown  in the first  part of
that figure.  Label the  regions in the handle part of  $\mathcal{H}$ by $a,b,c$ and $g$  and let the regions in the  sphere part of $\mathcal{H}$  be $d,e,f$
and $h$. There are two
cases.

\begin{itemize}
\item $n_d\neq 0$. Since $n_e=0$ ($e$ contains a suture) and $\x$ and $\y$ contain no
points in $P$,  $n_h-n_f=n_d-n_e>0$. Thus $n_h>0$, and since
$h$ is in the spherical part of $\mathcal{H}$, the region marked $h$ is  the same region as
the  disk region  of  $\Seif$, marked  $m$  in the  third  part of  Figure
$\ref{figfour}$. But the disk part  contains a suture, leading to a
contradiction.

\item $n_c\neq 0$.  Again, since $n_b=0$, a  similar argument shows that
$n_g-n_a>0$.  Proceed by examining the multiplicities of $\phi$ in the  regions of the handle  part of $\mathcal{H}$ which border the $\beta$
curve that separated  $a$ from $g$. Of course there may be  $\alpha$ curves encountered
on the way, as shown in the lower left part of Figure $\ref{figfour}$.  However, our
first  observation above shows that the difference of the multiplicities
of $\phi$  on the two regions adjacent to  the $\beta$ curve
stays positive. Proceeding along the handle until we reach the disk region, we  again get $n_m>0$. Thus in either case, we are done.

\end{itemize}

\end{proof}

\begin{figure}[htb]
\psfrag{A}{a}
\psfrag{B}{b}
\psfrag{C}{c}
\psfrag{D}{d}
\psfrag{E}{e}
\psfrag{F}{f}
\psfrag{G}{g}
\psfrag{H}{h}
\psfrag{I}{i}
\psfrag{J}{j}
\psfrag{K}{k}
\psfrag{L}{l}
\psfrag{M}{m}
\psfrag{b}{B}
\psfrag{a}{A}

\begin{center}\includegraphics[width=350pt]{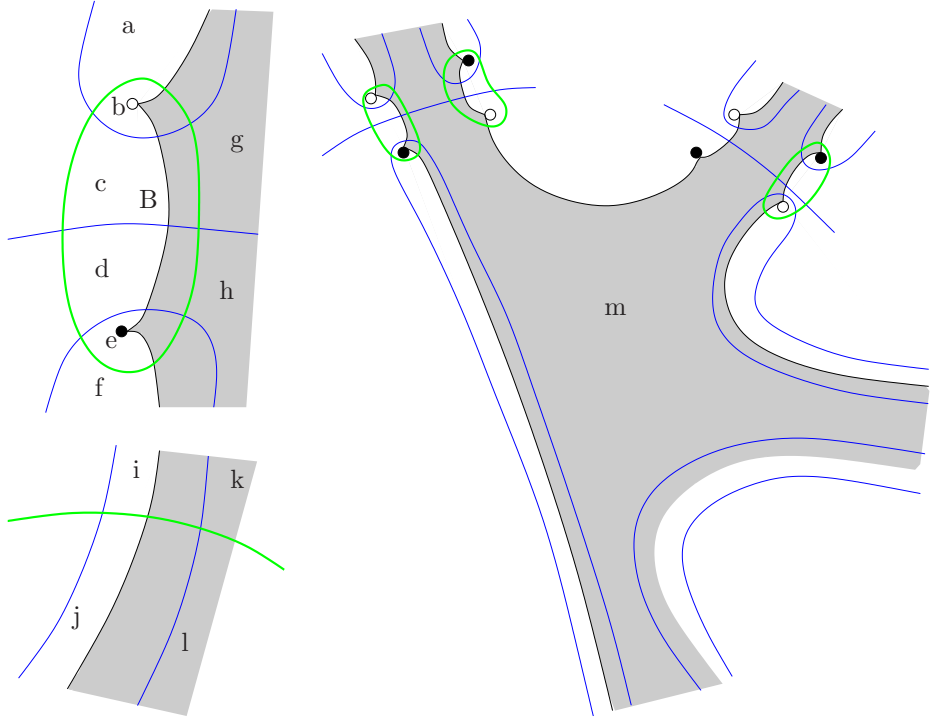}
\end{center}
\caption{Local      coefficients     of     $\phi$      at     various
points}\label{figfour}
\end{figure}

\section{Using $SFH(Y(R))$ to distinguish Seifert surfaces}
\label{sec:example}
Given an oriented knot $K\subset S^3$, there are several notions of equivalence one could consider for its Seifert surfaces.    We will consider two Seifert surfaces, $\Seif_1,\Seif_2$, to be {\em equivalent} if there is an isotopy of $S^3$ taking $\Seif_1$ to $\Seif_2$.  Note that this  is the same as considering $\Seif_1$ and $\Seif_2$ to be equivalent if there is an orientation preserving diffeomorphism between the pairs $(S^3,\Seif_1)$ and $(S^3,\Seif_2)$ (since the group of orientation preserving diffeomorphisms of the three-sphere is path-connected).    A more restrictive notion, called {\em strong equivalence},  regards $\Seif_1$ and $\Seif_2$ as equivalent if they are isotopic in the complement of $K$.
Note that we can discuss whether the surfaces $R_1$ and $R_2$ are equivalent if $\partial R_1$ and $\partial R_2$ are equivalent knots, while we can say that two surfaces are strongly equivalent only if $\partial\Seif_1 = \partial\Seif_2.$

The remainder of this section will be devoted to showing how the sutured Floer homology invariants of $S^3(\Seif)$ can be used to distinguish non-equivalent Seifert surfaces.  We do this through a detailed discussion of an example.  Figure \ref{fig:knot} depicts two Seifert surfaces, $\Seif_1$ and $\Seif_2$, each bounded by the knot $8_3$. Note that $\Seif_1$ is obtained by plumbing two bands with $+2$ and $-2$ full twists, respectively, and $\Seif_2$ is obtained by taking the dual plumbing. So indeed the two surfaces bound the same oriented knot. We will show that $\Seif_1$ and $\Seif_2$ are inequivalent.  Combining this with the results of \cite{Kakimizu}, it will follow that these represent all isotopy classes of Seifert surface for $8_3$ (see Proposition \ref{prop:all} below).

\begin{figure}
\psfrag{c1}{$c_2$}
\psfrag{c2}{$c_1$}
\psfrag{d1}{$d_2$}
\psfrag{d2}{$d_1$}
\psfrag{S}{$R_1$}
\psfrag{T}{$R_2$}

\begin{center}
\includegraphics[width=350pt]{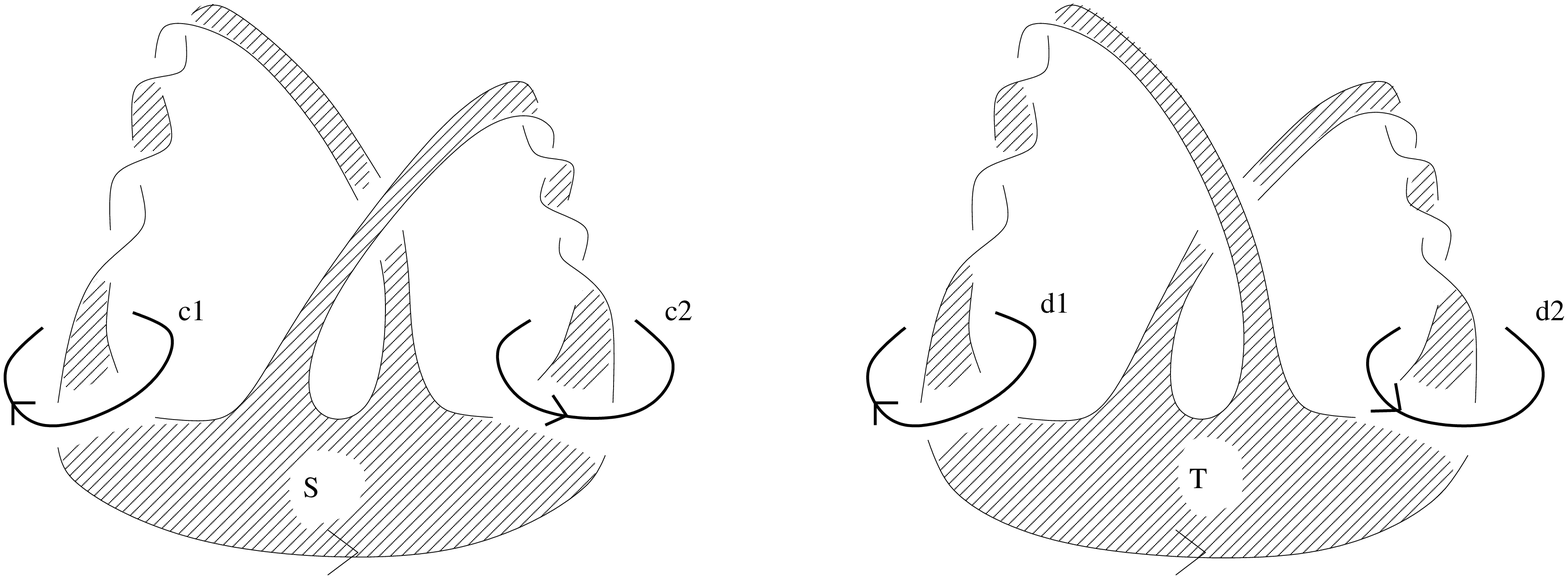}
\end{center}
\caption{\label{fig:knot}Two different Seifert surfaces for the same knot, $8_3$. The outward normal to each surface on the shaded region is out of the plane of the page (towards the reader).}
\end{figure}

\subsection{Classical Methods}\label{subsec:classical}
Before beginning, we make some preliminary remarks regarding this particular example and the applicability of previously known techniques.  As mentioned in the introduction, one effective way to distinguish isotopy classes of surfaces is through the fundamental group of their complements, see \cite{Alford}.  In the present case, however, this obviously fails.  Indeed,  $S^3\setminus\Seif_1$ is homeomorphic to $S^3\setminus\Seif_2$; they are both open handlebodies of genus $2$.  Thus any attempt to use the fundamental group will be fruitless.

Even in the case that the complements of the surfaces are homeomorphic,  classical techniques could still be of use.   The Seifert form provides a useful obstruction to finding an isotopy between two surfaces.  To describe this, recall that $H_1(R;\Z)$  is equipped with a bilinear form, $$Q_R: H_1(R;\Z)\otimes H_1(R;\Z)\rightarrow \Z,$$ called the Seifert form. Given
curves $a$ and $b$  in $\Seif$, let $b^+$ be a push-off  of $b$ in the direction specified by the positive unit normal vector field of $\Seif$. Then the  Seifert form evaluated   on $([a],[b])$ is the  linking  number of $a$  with  $b^+$  in  $S^3$.   Suppose now that two Seifert surfaces are isotopic.  It follows that they have congruent Seifert forms.  This means that there exists $W\in SL(2g,\Z)$ for which $$ V_{R_2}=W^T V_{R_1} W,$$
where $V_{R_i}$ are integral matrices representing $Q_{R_i}$ with respect to given bases for $H_1(R_i;\Z)\cong \Z^{2g}$.  Concretely, $W$ is the matrix representing the isomorphism of $H_1(R;\Z)$ induced by the diffeomorphism $(S^3,R_1)\cong (S^3,R_2)$.  Thus, to show that two Seifert surfaces are inequivalent, it suffices to show that their Seifert forms are not congruent (see \cite{Trotter} for applications of this method to Seifert surfaces of some pretzel knots).    In the situation at hand, however, this method also fails.  For the obvious symplectic bases, the following matrices represent the Seifert forms of $R_1$ and $R_2$:

\[ V_{R_1}= \left(\begin{array}{cc}
 2&0\\ 1&-2
\end{array} \right)\ \ \ \ \ \ \ \ \ \ \ \ \ \ \   V_{R_2}= \left( \begin{array}{ccc}
 2&-1\\ 0&-2
\end{array} \right). \] The intersection forms in the same basis are represented by
$$ U_{R_1}=  U_{R_2}=\left(\begin{array}{cc}
 0&1\\ -1&0
\end{array} \right).$$
One can easily check, however, that $V_{R_1}$ and $V_{R_2}$ are congruent.  An appropriate element of  $SL(2,\Z)$ is
\[W=\left( \begin{array}{cc}
 4&-5\\ -3& 4
\end{array} \right). \]
Note that $W$ also preserves the standard symplectic form on $\mathbb{Z}^2,$ i.e., $U_{R_2} = W^TU_{R_1}W,$ so the Seifert form and the intersection form together are incapable of distinguishing $R_1$ and $R_2.$

Finally, we remark that techniques from the theory of sutured manifolds have been quite fruitful in studying Seifert surfaces up to strong equivalence. See, for instance, \cite{Kob, Kakimizu}.  Indeed \cite{Kakimizu} has used these techniques to classify minimal genus Seifert surfaces up to strong equivalence for knots of $10$ or fewer crossings.   In particular, it follows that if $R_1$ and $R_2$ are dual plumbings of a $+2$ and a $-2$ twisted band, then they represent distinct strong equivalence classes, and that these are the only two such classes. It should be noted, however, that equivalence and strong equivalence are quite different.  For instance, if the bands both had framing $+2,$ then the boundary of the resulting dual surfaces would each be the knot $7_4$.  It follows from  \cite{Kakimizu}  that these two surfaces are strongly inequivalent.  It is easy to verify, however, that they are isotopic, and hence equivalent in our sense.  As our techniques are able to distinguish surfaces up to isotopy, we will make no further reference to strong equivalence.

\subsection{The Technique}\label{subsec:technique}
Suppose that two surfaces $R_1$ and $R_2$ are isotopic.  It follows that the complementary sutured manifolds, $S^3(R_1)$ and $S^3(R_2)$, will be equivalent.  Thus to show that $R_1$ and $R_2$ are inequivalent, it suffices to show that the sutured Floer homology groups $SFH(S^3(R_1))$, $SFH(S^3(R_2))$ are different.

The algorithm from the previous section tells us how to construct Heegaard diagrams adapted to the surfaces.   From these diagrams, we can identify generators for the chain complexes and determine the difference between the relative $\SpinC$ structures associated to generators, $\x$ and $\y$.

After analyzing the chain groups, we will determine their homology indirectly  through consideration of the Euler characteristic.   The total rank of the groups will agree for $R_1$ and $R_2$, since it equals the rank of the top group of the knot Floer homology of $\partial R_i=8_3$. Thus the heart of the argument is to distinguish the groups by showing that their $\RelSpinC$ gradings are different.

Given $\spinc_1,\spinc_2\in \RelSpinC(S^3(R_i))$ which support non-trivial Floer groups, our analysis of the chain complexes will produce a geometric representative for the difference class  PD$[\spinc_1-\spinc_2] \in H_1(S^3\setminus R_i;\Z)$.    To show that the Floer homology groups of $S^3(R_1)$ and $S^3(R_2)$ are different, we thus need a way to distinguish the various difference classes in $H_1(S^3\setminus R_1;\Z)$ from those in $H_1(S^3\setminus R_2;\Z)$.  This is rather subtle, however, since the presentation for $H_1(S^3\setminus R_i;\Z)$ depends on the Heegaard diagram (or, equivalently, the presentation of $R_i$), and there could be an equivalence between $S^3(R_1)$ and $S^3(R_2)$ which induces an automorphism of $H_1(S^3\setminus \Seif;\Z)$.

To remove this ambiguity we use the Seifert form of the surface.  To describe this, let $\Seif$  be a  Seifert surface  for a knot $K\subset S^3$, and let $M=S^3\setminus \mathrm{Int} (\Seif\times I)$ be the complement of a regular
neighborhood of  $\Seif$.   Then we  have the
following natural isomorphisms.

\begin{align*}
H_1(\Seif)&\cong H_1(\Seif,\partial \Seif)&\text{(Long    exact   sequence  for  the pair)}\\
&\cong H^1(\Seif)&\text{(Poincar\'e Duality)}\\
&\cong H^2(S^3,\Seif)&\text{(Long  exact  sequence  for  the pair)}\\
&\cong H^2(M,\partial M)&\text{(Excision)}\\
&\cong H_1(M). &\text{(Poincar\'e Duality)}
\end{align*}

Since the Seifert form is invariant under isotopy of $\Seif$,  the above
isomorphisms  endow $H_1(M)\cong H_1(S^3\setminus \Seif)$ with a bilinear form which we also denote by $Q_{\Seif}$. Given $a,b\in H_1(S^3\setminus \Seif;\Z)$, let us denote $Q_{\Seif}(a,b)$ by $a\cdot b$. Similarly, using the above isomorphisms, we can endow $H_1(S^3 \setminus R)$ with another bilinear form which is obtained from the intersection pairing on $H_1(R).$ Its value on the pair $(a,b)$ will be denoted by $a \cap b.$  The discussion shows that if $h \colon (S^3,R_1) \to (S^3,R_2)$ is an orientation preserving homeomorphism then $h_*\colon H_1(S^3 \setminus R_1) \to H_1(S^3 \setminus R_2)$ satisfies $a \cdot b = h_*(a) \cdot h_*(b)$ and $a \cap b = h_*(a) \cap h_*(b).$

Let $\langle\,c_1,c_2\,\rangle$ and $\langle\,d_1,d_2\,\rangle$ be bases of $H_1(S^3 \setminus \Seif_1)$ and $H_1(S^3 \setminus \Seif_2),$ respectively, as shown on Figure \ref{fig:knot}. Tracing through the isomorphisms in our particular examples shows that matrix representations for $Q_{\Seif_i}$ are also given by the matrices $V_{R_i}$ above.   Thus $c_i\cdot c_j$ (resp. $d_i\cdot d_j$) is given by the $ij$-th entry of $V_{R_i}$. The values of $a\cdot b$ will distinguish the difference classes discussed above which, in turn, will distinguish the sutured Floer homology as relatively graded groups.

\subsection{Calculation}

Consider the two Seifert surfaces $\Seif_1,\Seif_2$ for $8_3$ shown in Figure \ref{fig:knot}.  In this section, we calculate the sutured Floer homology groups of $S^3(R_i)$, showing that $SFH(S^3(R_1))\ncong SFH(S^3(R_2))$.  We discuss $R_1$ in detail, and then summarize the results for $R_2$.

\begin{figure}
\psfrag{a1}{$\alpha_1$}
\psfrag{a2}{$\alpha_2$}
\psfrag{a3}{$\alpha_3$}
\psfrag{b1}{$\beta_1$}
\psfrag{b2}{$\beta_2$}
\psfrag{b3}{$\beta_3$}
\psfrag{A}{A}
\psfrag{B}{B}
\psfrag{X}{x}
\begin{center}
\includegraphics[width=350pt]{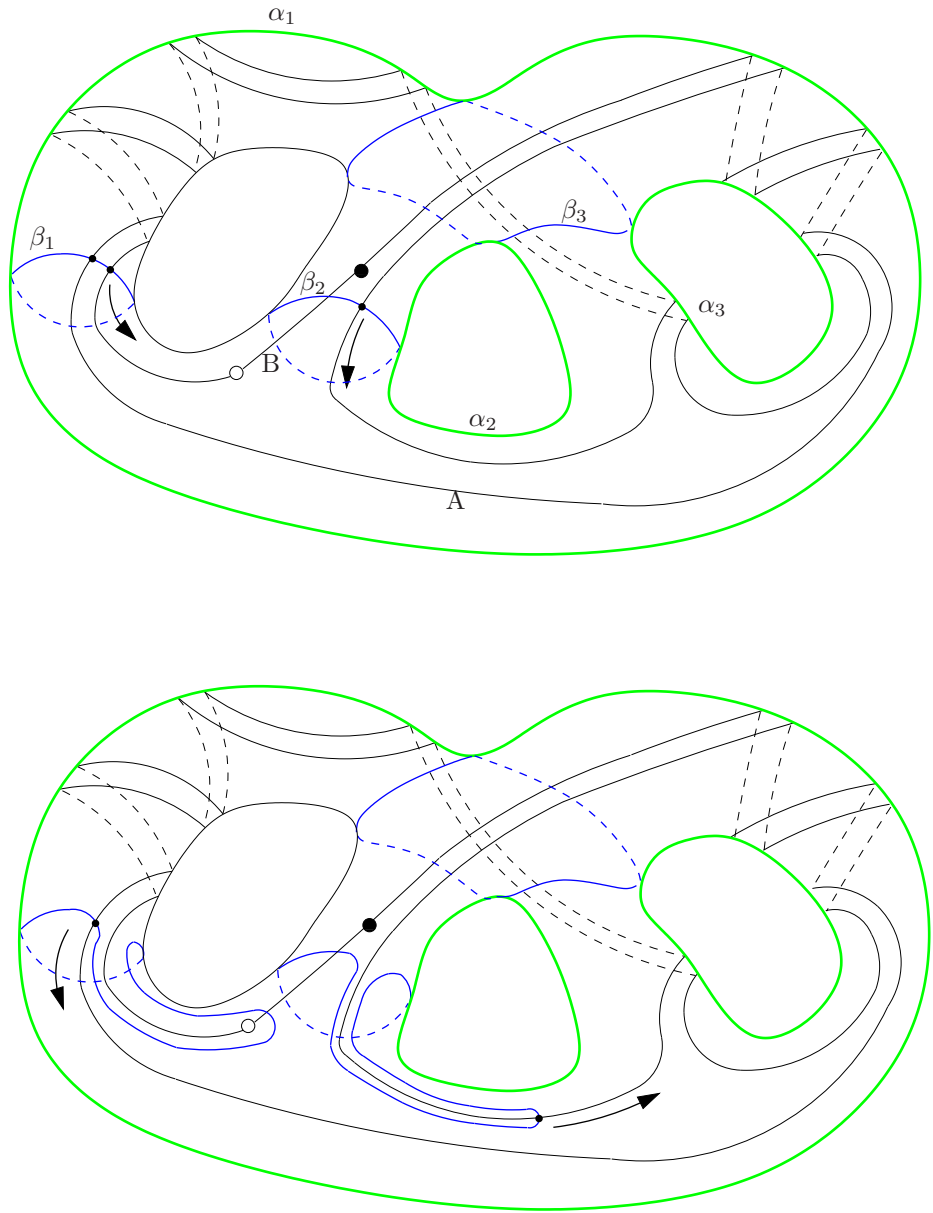}
\caption{\label{fig:makediagram1}Constructing the surface diagram for $\Seif_1$. The first figure shows a Heegaard diagram for $S^3$ containing $\Seif_1$ as a proper subsurface.  The thick green curves are $\alpha_i$, $i=1,2,3$ while the thin blue curves are $\beta_i$, $i=1,2,3$.  There are three intersection points between the $A$ arc and the $\beta$ curves, which we remove by a sequence of isotopies.}
\end{center}
\end{figure}

\begin{figure}
\psfrag{A}{A}
\psfrag{B}{B}
\psfrag{P}{P}
\begin{center}
\includegraphics[width=350pt]{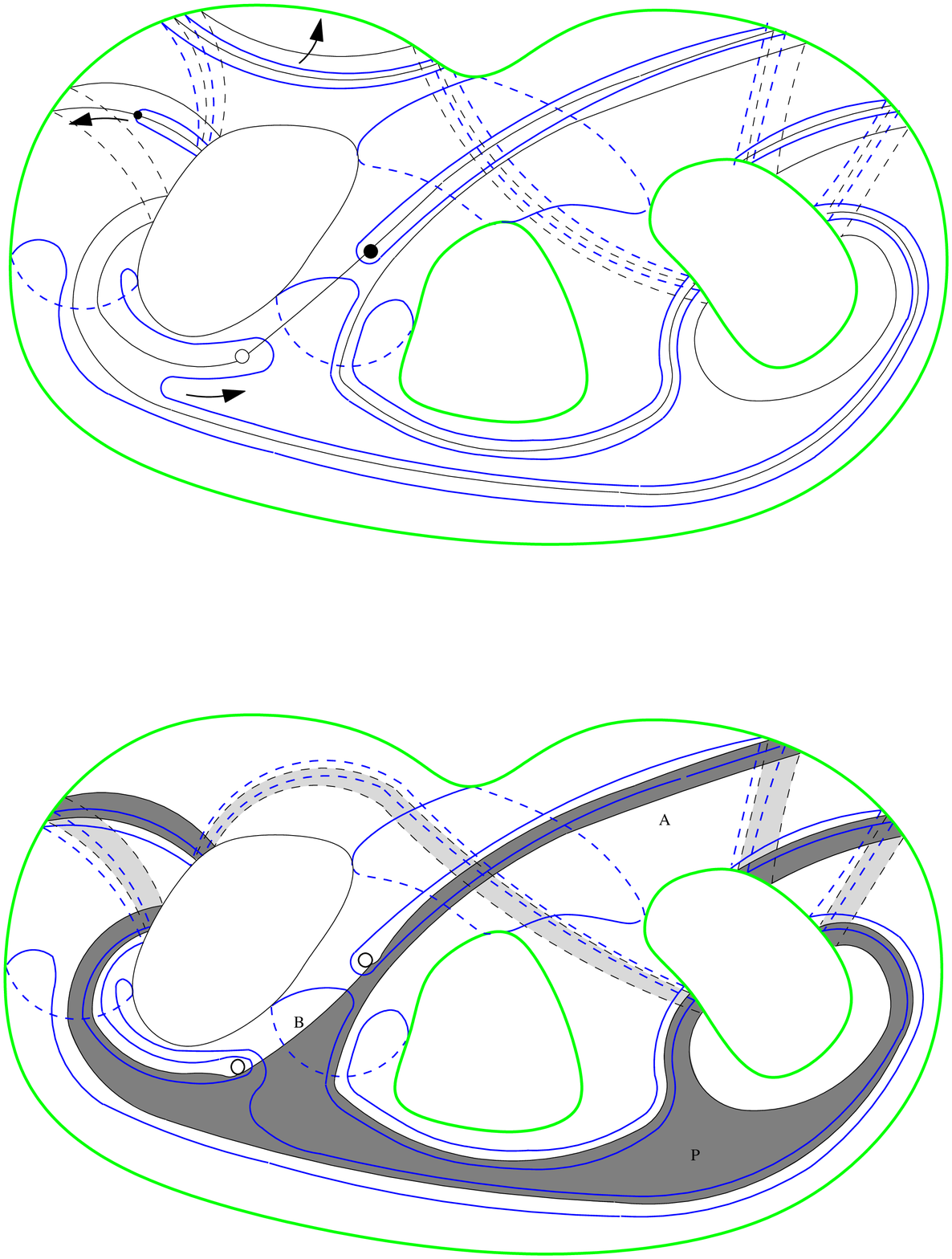}
\caption{\label{fig:makediagram3} The bottom figure is the surface diagram for $\Seif_1$.  We have shaded the quasi-polygon which represents $\Seif_1\subset S^3 \setminus K$. }
\end{center}
\end{figure}

\subsubsection{Drawing the diagram}
Figures \ref{fig:makediagram1} and \ref{fig:makediagram3} illustrate the construction of the surface diagram for the first surface, $\Seif_1$.   Our choice of basepoints ensures that the resulting $B$ arc of $K=\partial \Seif_1$ does not intersect the $\alpha$ curves.   When removing the $3$ intersections of the $A$ arc with the $\beta$ curves, we use only  isotopies of the $\beta$ curves.  Thus the resulting surface diagram is extremal in the sense that we do not use any stabilizations in the algorithm from Subsection \ref{subsec:algorithm}.

The bottom of Figure \ref{fig:makediagram3} shows the surface diagram, i.e., the  Heegaard diagram for the sutured manifold $S^3_2(K)$ (the knot complement with $2$ parallel meridional sutures) with $\Seif_1$ appearing as a quasi-polygon.

At this point, it is straightforward to construct the sutured Heegaard diagram for $S^3(\Seif_1)$.  Simply remove the quasi-polygon representing $\Seif_1$ from $\Sigma$ (the shaded region labeled $P$ in Figure \ref{fig:makediagram3}), and glue two copies of it to what remains in such a way that the gluing is along $A$ arcs on one copy and $B$ arcs on the other.    This is shown in Figure \ref{fig:R1decomposed}

After the decomposition, there are no intersection points of $\alpha$ and $\beta$ curves lying on the two quasipolygons, $P_A,P_B$  which we glued to $\Sigma \setminus P$.    Thus, the $3$-tuples of intersection points which comprise generators for the chain complex  will be contained in  $\Sigma\setminus P$.   For that reason,  it is convenient to erase all the $\beta$ arcs which intersect $P$ in the surface diagram of Figure \ref{fig:makediagram3}.  The resulting diagram is shown in Figure \ref{fig:R1}.  This latter diagram is simpler to work with, in general, and contains the homotopy theoretic data necessary to understand the chain complex as a relatively $H_1(S^3\setminus \Seif_1)$ graded group.



\subsubsection{The generators and their relative gradings}\label{subsubsec:relativegr}

From Figure \ref{fig:R1}, we see that there are $10$ generators for the sutured Floer chain complex.  We can label these generators by triples, where  $\x=x_1 x_2 x_3$ denotes the generator which
contains the point labeled $x_i$ on $\alpha_i$. 
Given generators $\x$ and $\y$, we wish to calculate the difference of their associated $\SpinC$ structures, $\spinc(\x)-\spinc(\y)$.     To do this, join each $x_i$ to $y_i$ by an oriented arc along $\alpha_i$, and then join $y_i$ to some $x_j$ by an oriented arc along a $\beta$ curve.  The result is a collection of closed curves, \diff \, whose homology class we denote by $\epsilon(\x,\y)=[\gamma_{\x,\y}]\in H_1(S^3 \setminus N(\Seif_1);\Z) \cong H_1(S^3\setminus \Seif_1;\Z)$, and refer to it as the {\em difference class} associated to $\x,\y$. According to Lemma \ref{lem:spincdiff}, the class $\epsilon(\x,\y)$ is Poincar{\'e} dual to $\spinc(\x)-\spinc(\y)\in H^2(S^3\setminus N(\Seif_1),\partial;\Z).$ Note that since the $\beta$ arcs  in Figure \ref{fig:R1} are connected,  \diff \ can be taken to lie entirely in that figure. A cycle representative for the difference class of $\x=512$ and $\y=313$ is shown in Figure \ref{fig:connect}.

  Ultimately, we wish to evaluate the Seifert form on the difference classes.  To this end, it will be convenient to express $\epsilon(\x,\y)$  in terms of the basis for $H_1(S^3\setminus \Seif_1;\Z)$  given by $c_1,c_2$ in Figure \ref{fig:knot}.   In order to do this,  push the
parts  \diff \ which lie on the $\alpha$ arcs towards the $\bm{\alpha}$
handlebody (i.e., outwards).  Similarly,  push the parts of \diff \ lying on
the $\beta$ curves towards the $\bm{\beta}$ handlebody
(i.e., inwards).  The result is a closed curve \difft \ which punctures the Heegaard surface only at the intersection points comprising $\bm{x}$ and $\bm{y}$.  See Figure \ref{fig:gamma2}.  Note, however, that the Heegaard surface contains the presentation of the Seifert surface in Figure \ref{fig:knot}.  Furthermore, \difft \ is in the complement of this presentation since the intersection points comprising $\bm{x}$ and $\bm{y}$ do not intersect  the Seifert surface.

\begin{figure}
\psfrag{a1}{$\alpha_1$}
\psfrag{a2}{$\alpha_2$}
\psfrag{a3}{$\alpha_3$}
\psfrag{b1}{$\beta_1$}
\psfrag{b2}{$\beta_2$}
\psfrag{b3}{$\beta_3$}
\psfrag{1}{$1$}
\psfrag{2}{$2$}
\psfrag{3}{$3$}
\psfrag{4}{$4$}
\psfrag{5}{$5$}
\psfrag{6}{$6$}
\psfrag{7}{$7$}
\begin{center}
\includegraphics[width=350pt]{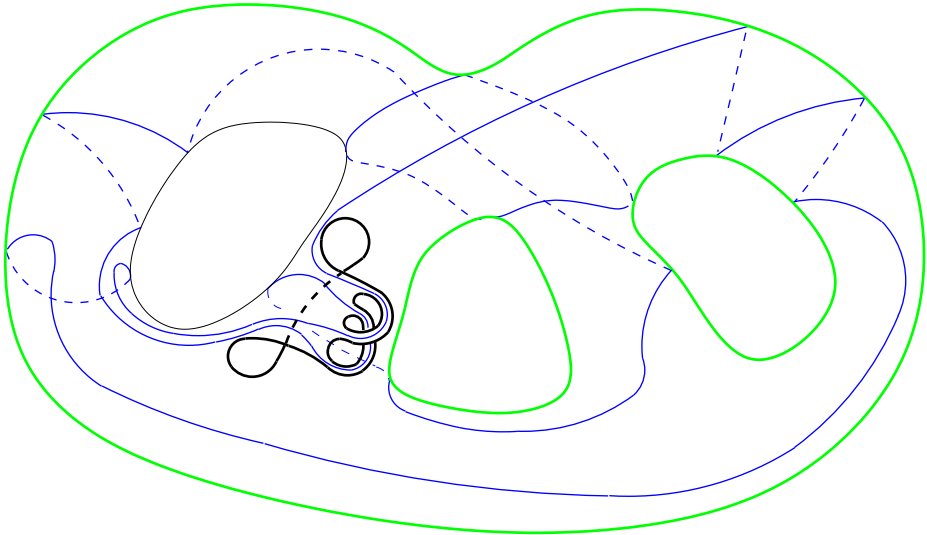}
\caption{\label{fig:R1decomposed}The decomposed diagram, i.e., the sutured diagram for $S^3(\Seif_1)$. The bold curve is the boundary of the Heegaard surface.}
\end{center}
\end{figure}

\begin{figure}
\psfrag{a1}{$\alpha_1$}
\psfrag{a2}{$\alpha_2$}
\psfrag{a3}{$\alpha_3$}
\psfrag{b1}{$\beta_1$}
\psfrag{b2}{$\beta_2$}
\psfrag{b3}{$\beta_3$}
\psfrag{1}{$1$}
\psfrag{2}{$2$}
\psfrag{3}{$3$}
\psfrag{4}{$4$}
\psfrag{5}{$5$}
\psfrag{6}{$6$}
\psfrag{7}{$7$}
\begin{center}
\includegraphics[width=350pt]{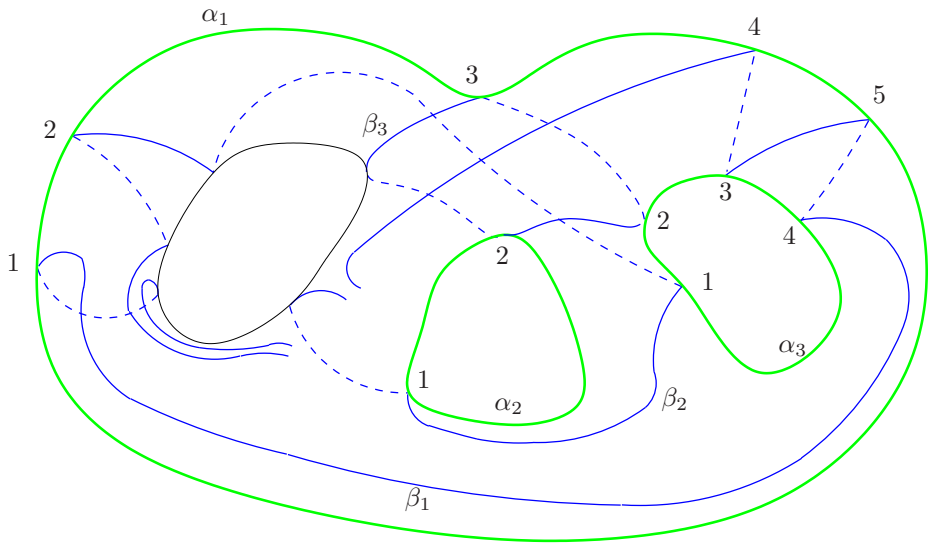}
\caption{\label{fig:R1} The surface diagram from Figure \ref{fig:makediagram3}, where we have erased all arcs of intersection $\beta\cap P$ of the $\beta$ curves with the quasipolygon $P$ representing $\Seif_1$.   This diagram contains all $3$-tuples of intersection points $\alpha_i\cap \beta_{\sigma(i)}$ which comprise the generators  of $SFH(S^3(\Seif_1))$.}
\end{center}
\end{figure}

\begin{figure}
\psfrag{a1}{$\alpha_1$}
\psfrag{a2}{$\alpha_2$}
\psfrag{a3}{$\alpha_3$}
\psfrag{b1}{$\beta_1$}
\psfrag{b2}{$\beta_2$}
\psfrag{b3}{$\beta_3$}
\psfrag{1}{$1$}
\psfrag{2}{$2$}
\psfrag{3}{$3$}
\psfrag{4}{$4$}
\psfrag{5}{$5$}
\psfrag{6}{$6$}
\psfrag{7}{$7$}
\psfrag{g}{\diff}
\begin{center}
\includegraphics[width=350pt]{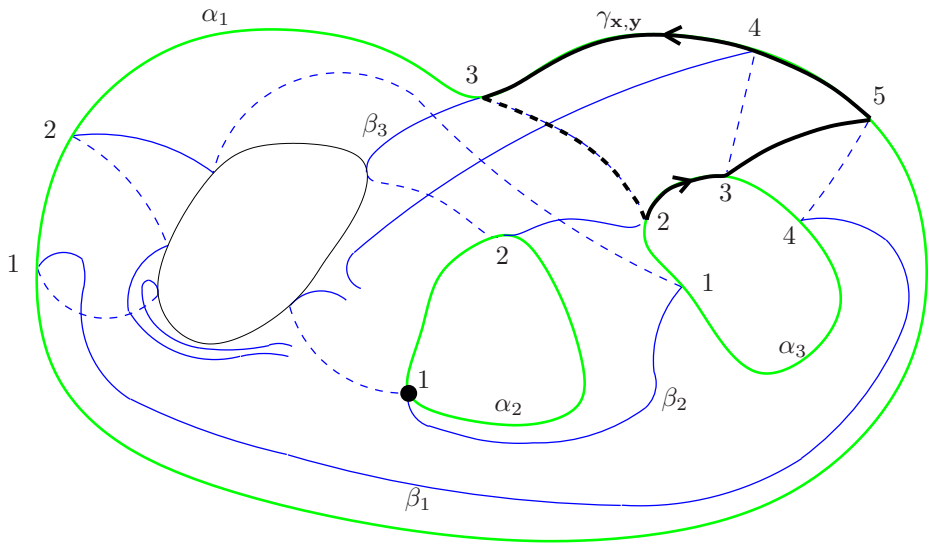}
\caption{\label{fig:connect} Construction of a cycle representative, $\gamma$, of the difference class, $\epsilon(\x,\y)=\mathrm{PD}[\spinc(\x)-\spinc(\y)]\in H_1(S^3\setminus \Seif_1)$.  Here $\x=512$, $\y=313$.  The representative is comprised of two curves, one of which is constant at $1\in \alpha_2\cap \beta_2$.}
\end{center}
\end{figure}

\begin{figure}
\psfrag{g}{\difft}
\psfrag{c}{$c_1$}
\psfrag{eq}{$\simeq$}

\begin{center}
\includegraphics[width=350pt]{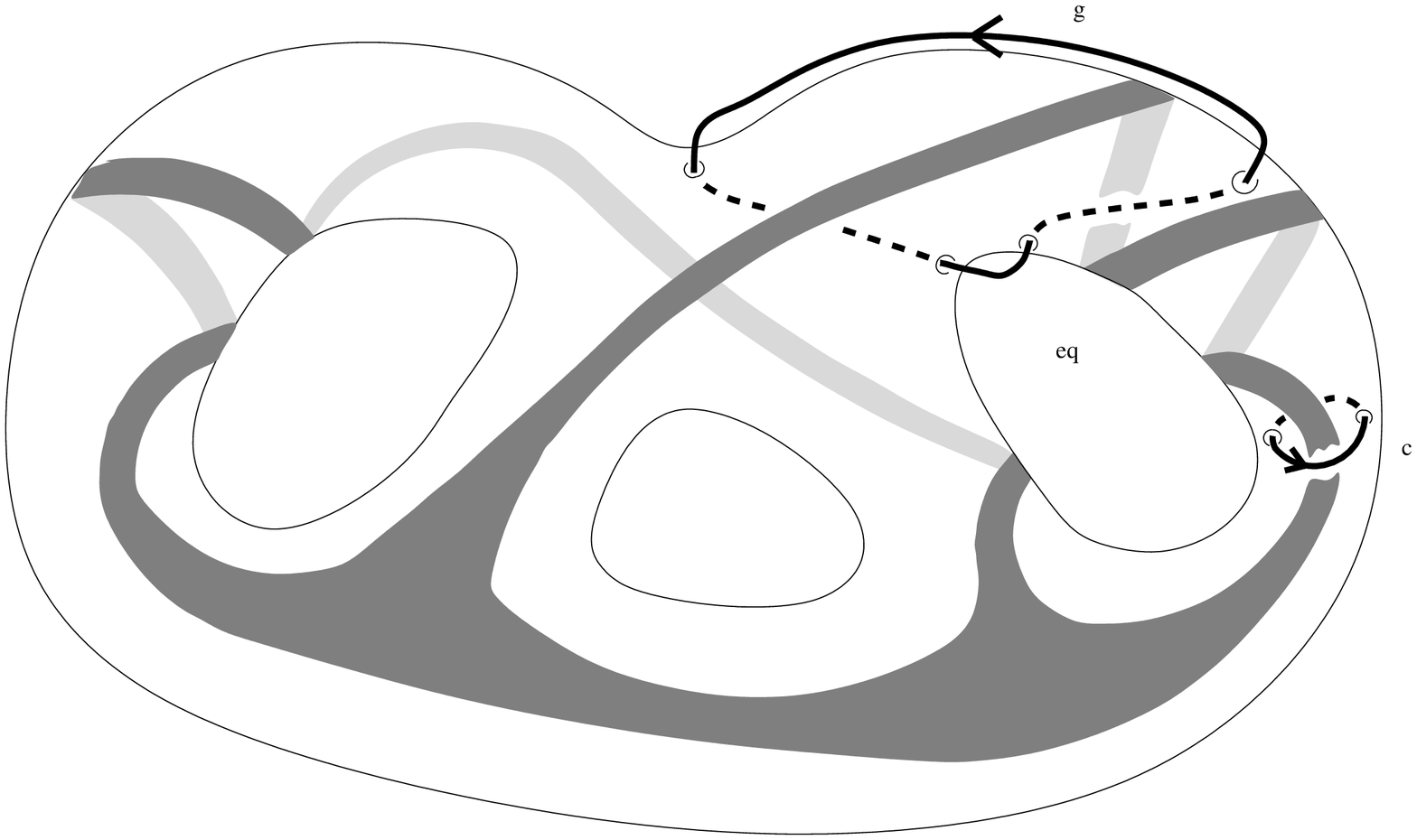}
\caption{\label{fig:gamma2} The push-off of \diff \ into the handlebodies yields a curve \difft \ living in the complement of $\Seif_1$.   In this example, \difft \ is homologous to $c_1$.}
\end{center}
\end{figure}

Thus we can regard \difft \ in two ways: as a curve in the sutured manifold $S^3(\Seif_1)$ presented by the decomposed diagram or as a curve in the complement of $\Seif_1$, as shown in Figure \ref{fig:knot}.  We claim that the homology class which \difft \ represents in $H_1(S^3\setminus \Seif_1)$ is the same, regardless of which way we view it.  In this way, we can determine the difference classes in terms of the basis for $H_1(S^3\setminus \Seif_1)$ given by $c_1,c_2$ in Figure \ref{fig:knot}.

To prove the claim, one need only trace through the construction of the sutured diagram, starting from the presentation of $\Seif_1$ in Figure \ref{fig:knot}.  We began by constructing a diagram for $S^3$ which contained $\Seif_1$ as a subsurface (the top part of Figure \ref{fig:makediagram1}).  Regarding \difft \ on this diagram, its homology class clearly agrees with that obtained by thinking of it as a curve in Figure \ref{fig:knot}.  Indeed, removing $\beta_1$ and $\beta_2$ from this diagram specifies  $S^3\setminus \Seif_1$ as it is presented in Figure \ref{fig:knot}  (see Remark  \ref{rem:surfacecomp}).   It follows that isotopies of $\beta_1,\beta_2$ do not change the homology class of \difft \  in $S^3\setminus \Seif_1$.  The surface diagram differs from the diagram for $S^3$ containing $\Seif_1$ only by a sequence of isotopies, followed by the removal of two disks to turn it into a sutured diagram for $S^3_2(K)$.  This latter modification, however, is  performed far
  from \difft \  and hence does not effect its homology class.   Another way to see this is that the surface diagram, by definition, specifies an embedding of the Seifert surface in the knot complement  $S^3\setminus K$.    From its construction, this embedding differs from the embedding shown in Figure \ref{fig:knot} only by an  isotopy  supported in an arbitrarily small neighborhood of $\partial \Seif_1$.  Thus, the homology class of \difft \ in $H_1(S^3\setminus \Seif_1)$ specified by the surface diagram agrees with that of Figure \ref{fig:knot}.   Finally,  decomposing the diagram is exactly the same as decomposing $S^3_2(K)$.  Since \difft \ is in the complement of the quasipolygon, its homology class in $H_1(S^3\setminus \Seif_1)$ (as specified by the Heegaard diagram)  is unchanged by the decomposition.  This proves the claim.

 Figures \ref{fig:connect} and \ref{fig:gamma2} indicate that the difference class between $\x=512$ and $\y=313$ is  $c_2$.  The remaining differences are easily computed,  and the following diagram represents the chain complex as a relatively $H_1(S^3\setminus \Seif_1)$ graded group.    (The explanation for the diagram is that each $\x$ is placed on a lattice point in the affine lattice generated by $c_1,c_2$. The difference between the lattice coordinates of $\x$ and $\y$ is the difference class $\epsilon(\x,\y)$.)

\xymatrix{& & & & & & \ar@{-}[d]^{c_2}   {223}    \ar@{-}[r]^-{c_1}   &{224} \ar@{-}[d]^{c_2} \\ \
& &  & & & & {313,412,421}  \ar@{-}[r]^{c_1}   &{314,512,521} \ar@{-}[r]^-{c_1}  &{112,121}
}



\subsubsection{The homology}
 We take our chain complexes with $\Z/2\Z$ coefficients. Our first observation is that the rank of the homology of the chain complex above  is $4$.  This follows from Theorem \ref{thm:topterm} above, together with the fact that $\mathrm{rk} \ \HFKa(K,1) =4.$
This latter fact can be seen in many ways and follows, for instance from the fact that  $8_3$ is an alternating knot of genus $1$ for which the top coefficient of the Alexander polynomial equals  $-4$. (According to Theorem $1.3$ of \cite{OSz9}, for alternating knots, one has$$\mathrm{rk}\  \HFKa(K,i) = |a_i|,$$ where $a_i$ is the $i$-th coefficient  of the symmetrized Alexander polynomial $\Delta_K(T)=a_0 +\sum_i a_i(T^i + T^{-i})$.)  Now the homology of each of the $2$ subcomplexes in the top row of the diagram is $\Z/2\Z$; indeed, each complex has a single generator.   This takes care of a $2$-dimensional subspace of the $4$-dimensional homology.  As for the rest of the homology, note that the subcomplex generated by $112,121$ must have even Euler characteristic while the subcomplexes generated by $314,512,521$ and   $313,412,421$, respectively have odd (in particular, non-zero) Euler characteristics.   The only way for this to happen is if the $112,121$ subcomplex is
 acyclic and the homology of the remaining two subcomplexes is $\Z/2\Z$.  Summarizing, the sutured Floer homology as a relatively $H_1(S^3\setminus \Seif_1)$ graded group is

\xymatrix{  & & & & & & \ar@{-}[d]^{c_2}   {\Z/2\Z}    \ar@{-}[r]^-{c_1}   &{\Z/2\Z} \ar@{-}[d]^{c_2} \\ \
& &  & & & & {\Z/2\Z}  \ar@{-}[r]^-{c_1}   &{\Z/2\Z}
}

\subsubsection{The results for $\Seif_2$}

\begin{figure}
\psfrag{a1}{$\alpha_1$}
\psfrag{a2}{$\alpha_2$}
\psfrag{a3}{$\alpha_3$}
\psfrag{b1}{$\beta_1$}
\psfrag{b2}{$\beta_2$}
\psfrag{b3}{$\beta_3$}
\psfrag{1}{$1$}
\psfrag{2}{$2$}
\psfrag{3}{$3$}
\psfrag{4}{$4$}
\psfrag{5}{$5$}
\psfrag{6}{$6$}
\psfrag{7}{$7$}
\psfrag{g}{\diff}
\begin{center}
\includegraphics[width=350pt]{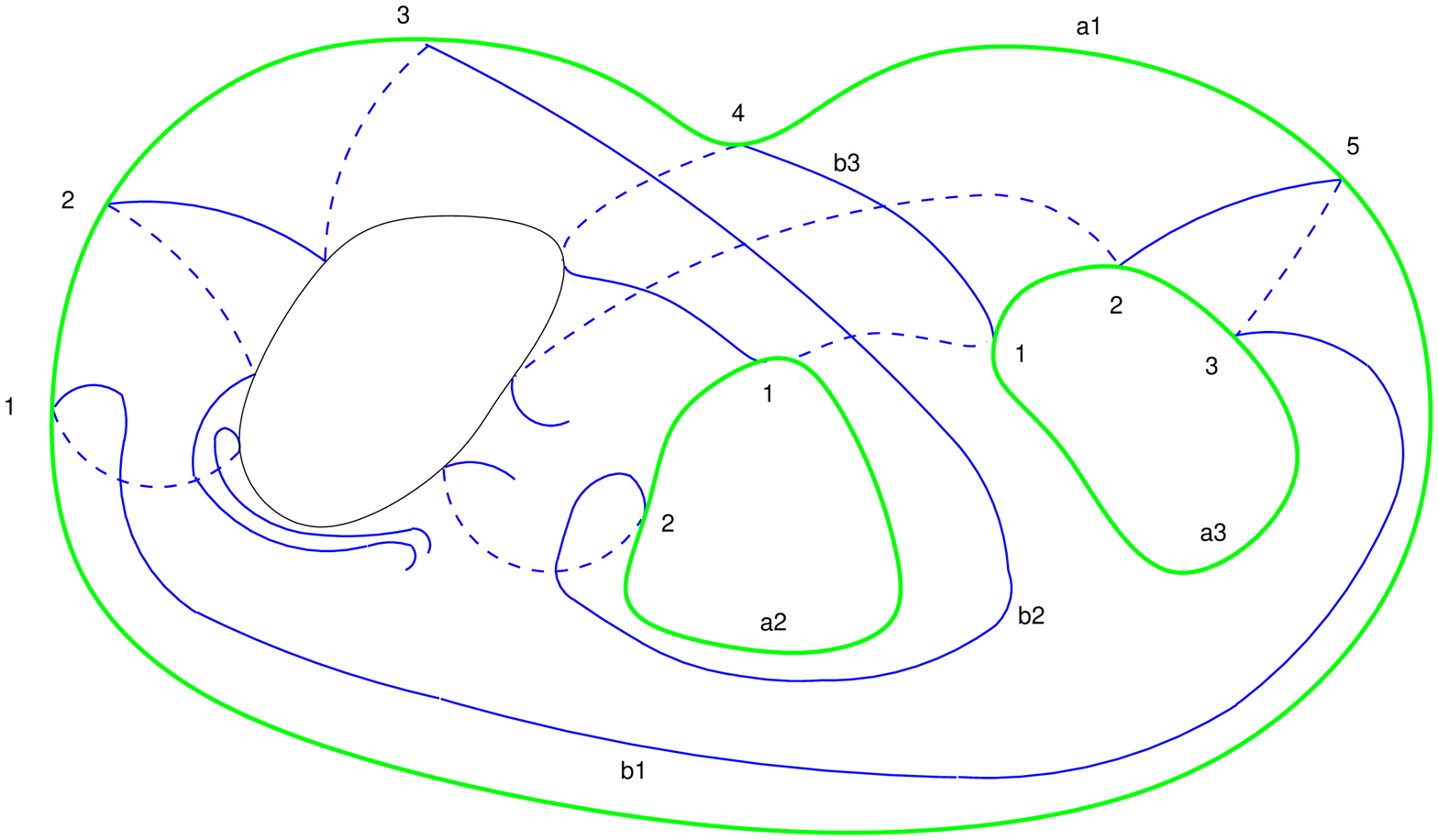}
\caption{\label{fig:R2} Diagram for $\Seif_2$.}
\end{center}
\end{figure}

Figure \ref{fig:R2} shows the Heegaard diagram for $\Seif_2$.  The resulting chain complex has $8$ generators, whose relative gradings are given in terms of the basis $d_1,d_2$ of Figure \ref{fig:knot} as follows \xymatrix{& & & & & & {212}  \ar@{-}[r]^{d_1}  \ar@{-}[d]^{d_2} &{213}  \ar@{-}[d]^{d_2} \\ \ & & & & & &  {312}  \ar@{-}[r]^-{d_1} \ar@{-}[d]^{d_2} &{313}  \ar@{-}[d]^{d_2}
\ \\ & & & & & &  {422,521}  \ar@{-}[r]^-{d_1}  &{423,121}
  }

As above, the rank of the sutured Floer homology for $S^3(\Seif_2)$ is $4$.  Since the Euler characteristic of the subcomplexes generated by $422,521$ and $423,121$ are even, the sutured Floer homology is given by

\xymatrix{\,\,\,\,\,\, & & & & & & \ar@{-}[d]^{d_2}   {\Z/2\Z}    \ar@{-}[r]^-{d_1}   &{\Z/2\Z} \ar@{-}[d]^{d_2} \\ \
\,\,\,\,\,\, & &  & & & & {\Z/2\Z}  \ar@{-}[r]^-{d_1}   &{\Z/2\Z}
}

\subsubsection{Distinguishing the groups}
Having calculated the sutured Floer homology of $S^3(\Seif_1)$ and $S^3(\Seif_2)$ as relatively graded groups, it remains to distinguish them.

We first remind the reader what it means for two collections of groups graded by relative $\SpinC$  structures to be isomorphic.

\begin{defn}\label{defn:SFHiso} Two relative $\SpinC$-graded groups
$$ SFH(M_1,\gamma_1)=\!\!\! \!\! \underset{  \spinc\in \RelSpinC(M_1,\gamma_1) }\bigoplus \!\!\! \!\!\!\!\!\!SFH(M_1,\gamma_1, \spinc)\ \ \ \ , \ \ \ \ SFH(M_2,\gamma_2)=\!\!\! \!\! \underset{ \spinc\in \RelSpinC(M_2,\gamma_2) }\bigoplus \!\!\! \!\!\!\!\!\!SFH(M_2,\gamma_2, \spinc)$$
are {\em isomorphic} (which we denote $SFH(M_1,\gamma_1)\cong SFH(M_2,\gamma_2))$  if
\begin{enumerate}
\item There is an isomorphism, $f^*:H^2(M_1,\partial M_1;\Z)\rightarrow H^2(M_2,\partial M_2;\Z).$
\item There is a bijection of sets $\sigma: \RelSpinC(M_1,\gamma
_1)\rightarrow \RelSpinC(M_2,\gamma
_2).$
\item The following diagram commutes

\xymatrix{   & & \ar  @{} [dr]  |  {}
 \RelSpinC(M_1,\gamma_1)\times H^2(M_1,\partial M_1;\Z) \ar[d] \ar[r]^{(\sigma,f^*)} & \RelSpinC(M_2,\gamma_2)\times H^2(M_2,\partial M_2;\Z)\ar[d] \\
& & \RelSpinC(M_1,\gamma_1) \ar[r]^{\sigma} & \RelSpinC(M_2,\gamma_2)}
\noindent where the vertical arrows are induced by the action of $H^2(M_i,\gamma_i)$ on $\RelSpinC(M_i,\gamma_i)$.
\item  There are isomorphisms $g_\spinc: SFH(M_1,\gamma_1, \spinc)\rightarrow SFH(M_2,\gamma_2, \sigma(\spinc))$ for every $\spinc\in \RelSpinC(M_1,\gamma_1).$

\end{enumerate}

 \end{defn}
If $f: (M_2,\gamma_2)\rightarrow (M_1,\gamma_1)$ is an equivalence, then Theorem \ref{thm:invariance} indicates that $SFH(M_1,\gamma_1)\cong SFH(M_2,\gamma_2).$    In this case,  $f^*$ and $\sigma$ in Definition \ref{defn:SFHiso} are both obtained by pull-back along  $f$.  In addition, if $f$ comes from the restriction of an equivalence of Seifert surfaces $(S^3,\Seif_2) \to (S^3,\Seif_1),$   then  $f_*: H_1(S^3 \setminus \Seif_1)\rightarrow H_1(S^3 \setminus \Seif_2)$ preserves  the Seifert form discussed in Subsection \ref{subsec:technique}, i.e.,  $a\cdot b = f_*(a)\cdot f_*(b)$.

Returning to our example, this can be made  concrete as follows.  Let us denote generators for the $4$ non-zero Floer homology groups of $R_1$ (resp. $R_2$)  by  $\x_i$  (resp. $\y_i$), so that the Floer homology groups are given by

\xymatrix{ & & & \ar@{-}[d]^{c_2}   {<\x_2>}    \ar@{-}[r]^-{c_1}   &{<\x_1>} \ar@{-}[d]^{c_2}  & & & \ar@{-}[d]^{d_2}   {<\y_2>}    \ar@{-}[r]^-{d_1}   &{<\y_1>} \ar@{-}[d]^{d_2} \\ \
 & & & {<\x_4>}  \ar@{-}[r]^-{c_1}   &{<\x_3>} & &
 & {<\y_4>}  \ar@{-}[r]^-{d_1}   &{<\y_3>}
}
\noindent where, $< - >$ means the $\Z/2\Z$ vector space generated by $-$.  Then, in order for $SFH(S^3(\Seif_1))$ to be isomorphic to  $SFH(S^3(\Seif_2))$, there must be a bijection between sets
$$\sigma: \{\x_1,\x_2,\x_3,\x_4\} \longrightarrow \{\y_1,\y_2,\y_3,\y_4\},$$
which is compatible with taking difference classes, i.e.,
$\epsilon(\sigma(\x_i),\sigma(\x_j))=f_*\epsilon(\x_i,\x_j)$, for some isomorphism $f_*:  H_1(S^3\setminus \Seif_1)\rightarrow  H_1(S^3\setminus \Seif_2)$.  Since we assume that $\Seif_1$ is equivalent to $\Seif_2$, $f_*$ must preserve the Seifert form.

Suppose that $\sigma$ exists.  Then we have
$$ \epsilon(\sigma(\x_1),\sigma(\x_2))^2=f_*\epsilon(\x_1,\x_2)^2=\epsilon(\x_1,\x_2)^2=c_1^2=2,$$
where squares indicate the pairing, under the Seifert form, of a class with itself.  Thus the difference of $\sigma(\x_1)$ and $\sigma(\x_2)$ is a class whose square is $2$.  Considering $\epsilon(\y_i,\y_j)$ for every $i\ne j$, we obtain $8$ distinct classes $$\begin{array}{cccccccccc}
\pm d_1 &  & & \pm d_2 & & &  \pm (d_1+d_2) & & & \pm (d_1-d_2),
\end{array}$$

\noindent whose squares are $$\begin{array}{ccccccccccccccccccc}
2 &  & & &     -2 & & & & &  &  -1 & & & & & &  1, & &
\end{array}$$\noindent respectively.   This shows that $\epsilon(\sigma(\x_1),\sigma(\x_2))=\pm d_1$.   Similarly, the fact that $$ \epsilon(\sigma(\x_1),\sigma(\x_3))^2=f_*\epsilon(\x_1,\x_3)^2=\epsilon(\x_1,\x_3)^2=c_2^2=-2,$$
\noindent implies that $\epsilon(\sigma(\x_1),\sigma(\x_2))=\pm d_2$.
We have arrived at a contradiction. For on the one hand $$\epsilon(\sigma(\x_1),\sigma(\x_2))\cdot \epsilon(\sigma(\x_1),\sigma(\x_3)) = \epsilon(\x_1,\x_2)\cdot  \epsilon(\x_1,\x_3)= c_1 \cdot c_2 = 0,$$ while on the other we have shown
$$\epsilon(\sigma(\x_1),\sigma(\x_2))\cdot \epsilon(\sigma(\x_1),\sigma(\x_3))= \pm d_1 \cdot \pm d_2,$$
\noindent and this latter pairing is non-zero, regardless of signs.  This shows that $SFH(S^3(\Seif_1))\ncong SFH(S^3(\Seif_2))$, and hence $\Seif_1\not\simeq \Seif_2$.

\begin{rem}  Our argument shows that there does not exist an orientation-preserving diffeomorphism of $S^3$ which takes $\Seif_1$ to $\Seif_2$.  There is, however, an obvious orientation-reversing diffeomorphism which sends  $\Seif_1$ to $\Seif_2$.  To see this, simply reflect $\Seif_1$ across the plane of the page, then rotate $180^\circ$ around a vertical axis through the middle of the surface.  The composition of the reflection and rotation is the aforementioned diffeomorphism.  Our result, then, can be interpreted as saying that sutured Floer homology detects ``chirality" of Seifert surfaces. On the other hand, it is an interesting fact that the knot $8_3$ is fully amphichiral.
\end{rem}

\subsection{A few consequences}
\begin{prop} \label{prop:all}$\Seif_1$ and $\Seif_2$ represent all equivalence classes of minimal genus Seifert surfaces for $8_3$.
\end{prop}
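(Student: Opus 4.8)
The plan is to combine Kakimizu's classification of minimal genus Seifert surfaces for $8_3$ up to \emph{strong} equivalence \cite{Kakimizu}, recalled in Subsection \ref{subsec:classical}, with the inequivalence $SFH(S^3(\Seif_1)) \ncong SFH(S^3(\Seif_2))$ established in the previous subsection. The key point is that strong equivalence refines equivalence, so a classification in the finer relation furnishes an upper bound on the number of equivalence classes, while the $SFH$ computation supplies the matching lower bound.

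First I would record that if two Seifert surfaces for $8_3$ are isotopic in the complement of the knot, then they are \emph{a fortiori} isotopic in $S^3$; hence every equivalence class of minimal genus Seifert surface for $8_3$ is a union of strong equivalence classes, and the number of equivalence classes is at most the number of strong equivalence classes. (A routine preliminary: an arbitrary minimal genus Seifert surface $F$ for $8_3$ has boundary an embedded copy of $8_3$, which we may move by an ambient isotopy of $S^3$ onto a fixed representative $K$; Kakimizu's classification, phrased for surfaces with boundary $K$ up to isotopy in $S^3\setminus K$, then applies to $F$.) By \cite{Kakimizu}, $8_3$ has exactly two strong equivalence classes of minimal genus Seifert surface, and, as recalled in Subsection \ref{subsec:classical}, these are the dual plumbings of a $+2$- and a $-2$-twisted band, i.e.\ the surfaces $\Seif_1$ and $\Seif_2$ of Figure \ref{fig:knot}. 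Thus $8_3$ has at most two equivalence classes of minimal genus Seifert surface, and $\{\Seif_1,\Seif_2\}$ meets each of them.

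To finish, I would invoke the computation of the previous subsection: $SFH(S^3(\Seif_1)) \ncong SFH(S^3(\Seif_2))$ as relatively $\SpinC$-graded groups, and since the sutured Floer homology of the complementary sutured manifold depends only on the equivalence class of the surface (Theorem \ref{thm:invariance} applied to $S^3(\Seif_i)$), the surfaces $\Seif_1$ and $\Seif_2$ are not equivalent. Together with the upper bound of two, this forces exactly two equivalence classes, represented by $\Seif_1$ and $\Seif_2$. The only step requiring genuine care is the middle one — that Kakimizu's list is complete and that its two members are correctly identified with the plumbing surfaces used here — but both facts are contained in the cited reference and its summary above, so the remainder of the argument is formal.
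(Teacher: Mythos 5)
Your proof is correct and is essentially the paper's own argument: the paper likewise combines Kakimizu's classification of the two strong equivalence classes with the $SFH$ computation showing $\Seif_1 \not\simeq \Seif_2$. You have merely made explicit the (true and routine) observation that strong equivalence refines equivalence, which the paper leaves implicit.
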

\begin{proof}
By \cite{Kakimizu} the knot $8_3$ has exactly two Seifert surfaces up to strong equivalence, namely $\Seif_1$ and $\Seif_2.$ We have
just seen above that these two surfaces are inequivalent. The result follows.
\end{proof}

\begin{thm}For any $n\ge1$, there exists a knot $K_n$ with Seifert surfaces $\{F_0,\dots F_n\}$, such that $F_i$ is not equivalent to $F_j$ for any $i\ne j$.
\end{thm}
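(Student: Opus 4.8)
The plan is to realize $K_n$ as a connected sum of $n$ copies of $8_3$ and to distribute the two surfaces $\Seif_1,\Seif_2$ for $8_3$ among the summands. Set $K_n:=8_3\#\cdots\#8_3$ ($n$ copies), and for $0\le j\le n$ let $F_j$ be a boundary connected sum of $n-j$ copies of $\Seif_1$ with $j$ copies of $\Seif_2$; since knot genus is additive under connected sum, each $F_j$ is a minimal genus Seifert surface for $K_n$. It remains to prove $F_i\not\simeq F_j$ whenever $i\ne j$.

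The first step is a Künneth formula. I claim that, as relatively $\SpinC$-graded groups,
\[ SFH(S^3(F_j))\ \cong\ SFH(S^3(\Seif_1))^{\otimes(n-j)}\otimes SFH(S^3(\Seif_2))^{\otimes j}, \]
and that under the natural identification $H_1(S^3\setminus F_j)\cong H_1(S^3\setminus\Seif_1)^{\oplus(n-j)}\oplus H_1(S^3\setminus\Seif_2)^{\oplus j}$ the relative $\SpinC$ gradings add, while the Seifert form $Q_{\Seif}$ on $H_1(S^3\setminus F_j)$ is the orthogonal direct sum of the Seifert forms of the summands (this last fact being classical). I would prove the Künneth statement directly with Heegaard diagrams: using the construction of Section \ref{sec:heegs}, produce surface diagrams for the summands and glue them, along boundary arcs and in a region disjoint from all $\boldsymbol{\alpha}$- and $\boldsymbol{\beta}$-curves, to a surface diagram for $F_j$ in $S^3$ whose quasi-polygon is the glued union of the summands' quasi-polygons; the associated decomposed diagram is then a boundary connected sum of the summands' decomposed diagrams. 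Consequently generators multiply, every domain $\phi$ splits as a sum $\phi=\sum_k\phi_k$ over the pieces with $\mu(\phi)=\sum_k\mu(\phi_k)$ and $\#\widehat{\Mod}(\phi)=\prod_k\#\widehat{\Mod}(\phi_k)$, and the classes $\epsilon$ of Lemma \ref{lem:spincdiff} decompose additively over the $n$ summands. Over $\Z/2\Z$ this means the sutured Floer chain complex for $F_j$ is literally the tensor product of those of the summands, so the displayed isomorphism, together with all the grading data, holds on homology.

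Granting the Künneth formula, recall from Section \ref{sec:example} that the support of $SFH(S^3(\Seif_i))$ is a four-element ``square'' whose edge differences form a $\Z$-basis of $H_1(S^3\setminus\Seif_i)\cong\Z^2$ --- call it $\{c_1,c_2\}$ for $\Seif_1$ and $\{d_1,d_2\}$ for $\Seif_2$ --- on which the Seifert form is represented by $V_{R_1}$, respectively $V_{R_2}$. Hence, after choosing a base relative $\SpinC$ structure, the support $\mathrm{Supp}(F_j)\subset H_1(S^3\setminus F_j)\cong\Z^{2n}$ is the image of the cube $\{0,1\}^{2n}$ under a $\Z$-linear isomorphism onto $\Z^{2n}$, and its $2n$ ``edge directions'' come in $n$ pairs --- pairs from distinct summands being Seifert-orthogonal, and within each pair the Seifert form being $V_{R_1}$ or $V_{R_2}$. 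Now suppose $F_i\simeq F_j$. An orientation-preserving diffeomorphism of the pairs induces an equivalence of the complementary sutured manifolds, hence an isomorphism of relatively $\SpinC$-graded groups in the sense of Definition \ref{defn:SFHiso}; its underlying map $h_*\colon H_1(S^3\setminus F_i)\to H_1(S^3\setminus F_j)$ carries $\mathrm{Supp}(F_i)$ onto $\mathrm{Supp}(F_j)$ and preserves the Seifert form (Subsection \ref{subsec:technique}). Since both supports are affine cubes with respect to the edge-direction bases, $h_*$ takes the edge-direction $\Z$-basis on the left to a signed permutation of the edge-direction $\Z$-basis on the right; moreover, a pair $\{x,y\}$ of edge directions is one of the distinguished pairs precisely when $x\cdot y\ne0$ or $y\cdot x\ne0$, so $h_*$ matches the $n$ pairs on the left bijectively with the $n$ pairs on the right, preserving the Seifert form on each. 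Finally --- and this is just the computation behind the inequivalence of $\Seif_1$ and $\Seif_2$ --- the type of such a pair is determined by the Seifert form: exactly one of $x\cdot x,\ y\cdot y$ equals $2$ (the other equals $-2$), and if $x$ denotes that element then the pair carries $V_{R_1}$ exactly when $x\cdot y=0$ and $V_{R_2}$ exactly when $x\cdot y\ne0$. So $h_*$ matches $V_{R_1}$-pairs with $V_{R_1}$-pairs, which forces $n-i=n-j$, i.e.\ $i=j$, a contradiction.

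The step I expect to be the main obstacle is the Künneth formula together with its compatibility with the relative $\SpinC$ grading and the Seifert form: one must verify that the surface diagrams of Section \ref{sec:heegs} can be glued so as to present the sutured manifold of the connected-sum knot exterior with $F_j$ appearing as a glued quasi-polygon, and that this gluing is genuinely a boundary connected sum of Heegaard diagrams carried out away from every attaching curve, so that the chain complex is the honest tensor product of the summands' complexes and the difference classes $\epsilon(\x,\y)$ decompose additively. Everything after that is the $n=1$ analysis of Section \ref{sec:example} applied one summand at a time: reconstruct the cube and its $n$ edge-direction pairs from $\mathrm{Supp}(F_j)$, and read off the type of each pair from the Seifert form.
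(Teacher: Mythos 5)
Your proposal is correct and follows the paper's strategy in outline: the same $K_n=8_3\#\cdots\#8_3$ with boundary connected sums of $\Seif_1$ and $\Seif_2$, a K\"unneth splitting of $SFH$ compatible with the relative $\SpinC$ grading and the Seifert form, and an evaluation of that form on difference classes to rule out an equivalence. Two steps are implemented differently. For the K\"unneth formula, the paper performs $n-1$ product disk decompositions on $S^3(F_i)$ to reach a disjoint union of copies of $S^3(\Seif_1)$ and $S^3(\Seif_2)$, then invokes invariance of $SFH$ under product decompositions (Lemma 9.13 of \cite{sutured}) together with the K\"unneth principle for disjoint unions; your direct gluing of surface diagrams should work, but it obliges you to re-verify that the glued diagram presents $S^3(F_j)$ and that domains, Maslov indices, and moduli counts split, all of which the cited lemma lets the paper sidestep. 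In the endgame, the paper evaluates the Seifert form on three specific difference classes (squares $2n$ and $-2n$) and must additionally use preservation of the intersection form to pin down the signs $\delta_l,\rho_l$ before computing the pairing $i-n$ versus $k-n$; your observation that the support is an affine cube, so that any grading-preserving isomorphism carries the edge-direction basis to a signed permutation of the other edge-direction basis, after which same-summand pairs and their types ($V_{R_1}$ versus $V_{R_2}$) are recovered from the sign-invariant conditions $x\cdot y\neq 0$ or $y\cdot x\neq 0$ and $x\cdot y=0$ for the square-$2$ element $x$, is cleaner and dispenses with the intersection form entirely. Both routes are valid; yours trades a harder K\"unneth step for a slicker final count.
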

\begin{proof}
Take $K_n$ to the connected sum of $n$ copies of $8_3$, and let $F_i$ be the Seifert surface obtained by forming the boundary connected sum of  $i$ copies of $R_1$ and $n-i$ and copies of the $R_2$.  Note that we can perform $n-1$ product disk decompositions\footnote{A product disk decomposition is a surface decomposition along a properly embedded disk which intersects the sutures in exactly $2$ points.} to  $S^3(F_i)$ to obtain a sutured manifold which is equivalent to the disjoint union of $i$ copies of $S^3(\Seif_1)$ and $n-i$ copies of $S^3(\Seif_2)$.  Now sutured Floer homology is unchanged under product decompositions (see Lemma $9.13$ of \cite{sutured}), and under disjoint union behaves according to the K{\"u}nneth principle:
$$SFH(Y_1\sqcup Y_2, \gamma_1\sqcup \gamma_2,\spinc_1 \sqcup \spinc_2) \cong SFH(Y_1,\gamma_1,\spinc_1)\otimes SFH(Y_2,\gamma_2,\spinc_2).$$
(where we work with $\Z/2\Z$ coefficients to avoid any Tor terms).  Using these facts together with the calculation from the previous section, we can distinguish the number of copies of $R_1$ used to form $F_i$ as follows.  First, observe that rk $SFH(S^3(F_i))=4^n$.  A generating set for the Floer homology of $F_i$ is given by $n$-tuples  ${\bf x_jy_m}$, with ${\bf j}=\{j_1,\dots,j_i\}, {\bf m}=\{m_1,\dots m_{n-i}\}$ and $j_l,m_k \in \{1,2,3,4\}$.  The $n$-tuple ${\bf x_jy_m}$ corresponds to
$$\x_{j_1}^1\otimes \dots \otimes \x_{j_i}^i \otimes \y_{m_{1}}^{1}\otimes   \dots \otimes \y_{m_{n-i}}^{n-i}, $$
 where $\x_{j_{l}}^l$ (resp. $\y_{j_{l}}^l$) is one of the $4$ generators of the sutured Floer homology of the $l$-th copy of $R_1$ (resp. $R_2$) used to form $F_i$. The difference classes associated to the generators are then given by
$$ \epsilon( {\bf x_jy_m} , {\bf x_{j'}y_{m'}}) = \epsilon(\x_{j_1},\x_{j'_1})\oplus \dots \oplus \epsilon(\x_{j_i},\x_{j'_i}) \oplus \epsilon(\y_{m_1},\x_{m'_1})\oplus \dots \oplus \epsilon(\y_{m_1},\y_{m'_1})
,$$
with $\epsilon(\x_{j_l},\x_{j'_l})$  (resp. $\epsilon(\y_{j_l},\y_{j'_l})$) one of the $8$ distinct differences   $\pm c_1^l, \pm c_2^l,\pm (c_1^l+c_2^l), \pm (c_1^l-c_2^l)$  (resp. $\pm d_1^l, \pm d_2^l,\pm (d_1^l+d_2^l), \pm (d_1^l-d_2^l).$ Here, as throughout, the upper indices on $c_k^l$ (resp. $d_k^l$) are used to  denote an element in $H_1(S^3\setminus F_i)$ which comes from the $l$-th copy of $R_1$ (resp. $R_2$). Finally, we note that the Seifert form on $H_1(S^3(F_i))\cong \Z^{2n}$ splits as a sum,
$$ Q_{F_i}=Q_{R_1^1} \oplus \dots\oplus Q_{R_1^i}\oplus Q_{R_2^1}  \oplus \dots\oplus Q_{R_2^{n-i}}.$$

Now suppose that $F_i$ is isotopic to $F_k$ for some $i\ne k$. As in the previous section, this implies that there is a bijection between generators
$$\sigma: \{ {\bf x_jy_m}\} \rightarrow  \{ {\bf \tilde{x}_j\tilde{y}_m}\},$$
which is compatible with an isomorphism $f_* : H_1(S^3\setminus F_i)\rightarrow  H_1(S^3\setminus F_k)$, which preserves the Seifert form.  We use $\sim $ to distinguish generators for $F_k$ from those  for $F_i$. Abusing notation, for $k \in \mathbb{Z}$ let ${\bf k}=\{k,k,\dots,k\}$ denote the vector of any length, all of whose entries are $k.$  We have
$$ \epsilon(\sigma({\bf x_1y_1}),\sigma({\bf x_{2}y_{2}}))^2=f_*\epsilon({\bf x_1y_1},{\bf x_{2}y_{2}})^2=\epsilon({\bf x_1y_1},{\bf x_{2}y_{2}})^2=(c_1^1+\dots+c_1^i+d_1^1+\dots d_1^{n-i})^2=2n.$$
It follows that $$\epsilon(\sigma({\bf x_1y_1}),\sigma({\bf x_{2}y_{2}}))= \delta_1 c_1^1 + \dots + \delta_k c_1^k + \rho_1 d_1^1 + \dots + \rho_{n-k} d_1^{n-k}, $$ for some choice of signs $\delta_l, \rho_l \in \{-1,1\}$.  Indeed these are the only elements in $H_1(S^3\setminus F_k)$ of square $2n$ which arise as differences of generators.  A similar analysis shows that $$\epsilon(\sigma({\bf x_1y_1}),\sigma({\bf x_{3}y_{3}}))= \delta_1' c_2^1 + \dots + \delta_k' c_2^k + \rho_1' d_2^1 + \dots + \rho_{n-k}' d_2^{n-k}, $$
as these are the only elements in $H_1(S^3\setminus F_k)$ of square $-2n$.
We claim that the signs must agree in both cases; that is, $\delta_l'=\delta_l$ and $\rho_l'=\rho_l$ for all $l$.  To see this observe that $f_*$, in addition to preserving the Seifert form, must preserve the intersection product on $H_1(S^3\setminus F_i)$ inherited from $H_1(F_i)$ (equivalently, $f_*$ must be a symplectomorphism of the symplectic vector space $H_1(S^3 \setminus F_i;\R)$).  Denoting this product by $\cap$, we have $$\epsilon({\bf x_1y_1},{\bf x_{2}y_{2}})\cap \epsilon({\bf x_1y_1},{\bf x_{3}y_{3}})= (c_1^1+\dots+c_1^i+d_1^1+\dots d_1^{n-i}) \cap (c_2^1+\dots+c_2^i+d_2^1+\dots d_2^{n-i}) = n.$$  On the other hand, $$\epsilon(\sigma({\bf x_1y_1}),\sigma({\bf x_{2}y_{2}}))\cap \epsilon(\sigma({\bf x_1y_1}),
\sigma({\bf x_{3}y_{3}}))= f_*\epsilon({\bf x_1y_1},{\bf x_{2}y_{2}})\cap f_*\epsilon({\bf x_1y_1},{\bf x_{3}y_{3}}) =$$
$$=(\delta_1 c_1^1 + \dots + \delta_k c_1^k + \rho_1 d_1^1 + \dots + \rho_{n-k} d_1^{n-k}) \cap (\delta_1' c_2^1 + \dots + \delta_k' c_2^k + \rho_1' d_2^1 + \dots + \rho_{n-k}' d_2^{n-k})= $$ $$ \delta_1\delta_1' c_1^1\cap c_2^1 + \dots + \delta_k\delta_k' c_1^k\cap c_2^k + \rho_1\rho_1' d_1^1\cap d_2^1 + \dots + \rho_{n-k}\rho_{n-k}' d_1^{n-k}\cap d_2^{n-k}=$$ $$= \delta_1\delta_1'  + \dots + \delta_k\delta_k'  + \rho_1 \rho_1^\prime + \dots + \rho_{n-k}\rho_{n-k}^\prime,$$
and this latter expression can equal $n$ only when $\delta_l'=\delta_l$ and $\rho_l'=\rho_l$ for all $l$.  This proves the claim.

To complete the proof of the theorem, calculate
$$\epsilon({\bf x_1y_1},{\bf x_{2}y_{2}}) \cdot \epsilon({\bf x_1y_1},{\bf x_{3}y_{3}})=  c_1^1\cdot c_2^1 + \dots +  c_1^i\cdot c_2^i + d_1^1\cdot d_2^1 + \dots +  d_1^{n-i}\cdot d_2^{n-i} =  i-n.$$
Since $f_*$ preserves the Seifert form this should be equal to
$$\epsilon(\sigma({\bf x_1y_1}),\sigma({\bf x_{2}y_{2}})) \cdot \epsilon(\sigma({\bf x_1y_1}),\sigma({\bf x_{3}y_{3}}))=$$ $$ =\delta_1\delta_1' c_1^1\cdot c_2^1 + \dots + \delta_k\delta_k' c_1^k\cdot c_2^k + \rho_1\rho_1' d_1^1\cdot d_2^1 + \dots + \rho_{n-k}\rho_{n-k}' d_1^{n-k}\cdot d_2^{n-k} =  k-n.$$
Since $i\ne k$, the proof is complete.
\end{proof}

  \section{Directions for Future Research}

As our intent was to introduce the tools necessary for the study of Seifert surfaces through sutured Floer homology, there are many directions one could pursue and questions left unanswered.

One interesting aspect of our particular examples was that the sutured Floer homology groups contained no more information than their Euler characteristic.  Indeed, while the Euler characteristic of sutured Floer homology was not a previously studied invariant, it is classical in the sense that it is an appropriate version of Turaev torsion. On the one hand, this tells us that there is a useful invariant of Seifert surfaces which exists independent of Floer homology (and is easier to compute).  On the other, it begs the question to find examples where the full power of the Floer homology is needed.  Of course taking the connected sum of our examples with the unique minimal genus Seifert surface of a Whitehead doubled knot will produce examples of Seifert surfaces distinguished by sutured Floer homology groups whose Euler characteristic is trivial, but presumably these case could still be handled by ``classical" techniques.

In another direction, we have a  computable invariant which can presumably be used to start the isotopy classification of minimal genus Seifert surfaces for knots in the tables (recall, again, that the previous classifications were for the somewhat less natural notion of strong equivalence).  Since little appears to be known generally about this  type of equivalence, filling in the tables may be a useful first step.

Finally, in analogy with the \os \ knot invariants, it seems reasonable to ask if there are  geometric or combinatorial properties of Seifert surfaces (perhaps having an alternating projection) which constrain the sutured Floer invariants.


\bibliographystyle{amsplain} \bibliography{topology,mybib}
\end{document}